\definecolor{links}{rgb}{.2,.1,.5}
\definecolor{cites}{rgb}{.5,.1,.2}
\newcommand\CC{{\mathbb C}}
\newcommand\QQ{{\mathbb Q}}
\newcommand\RR{{\mathbb R}}
\newcommand\ZZ{{\mathbb Z}}
\newcommand\PP{{\mathbb P}}
\newcommand{\cH}{{\mathcal H}}
\newcommand{\str}{simplicial, terminal, and reflexive\xspace}
\newcommand{\1}{\mathbf 1}
\newcommand{\0}{\mathbf 0}
\newcommand\neigh{\operatorname{N}} % neighboring facet
\newcommand{\SetOf}[2]{\left\{#1\vphantom{#2}\,\right.\left|\,\vphantom{#1}#2\right\}}
\newcommand{\smallSetOf}[2]{\{#1\,|\,#2\}}
\newcommand\HSBC[1]{\operatorname{DP}(#1)} % Del Pezzo
\renewcommand\Vert[1]{\operatorname{Vert}(#1)} % vertices of a polytope
\DeclareMathOperator\ecc{ecc}
\DeclareMathOperator\conv{conv}
\DeclareMathOperator\pos{pos}
\DeclareMathOperator\lin{lin}
\DeclareMathOperator\opp{opp}
\DeclareMathOperator\Hom{Hom}
\newcommand\GL[2]{\textrm{GL}_{#1}{#2}}
\newcommand{\theoremname}{Theorem}
\newcommand{\corollaryname}{Corollary}
\newcommand{\lemmaname}{Lemma}
\newcommand{\propositionname}{Proposition}
\newcommand{\conjecturename}{Conjecture}
\newcommand{\remarkname}{Remark}
\newcommand{\examplename}{Example}
\newcommand{\definitionname}{Definition}
\newcommand{\questionname}{Question}
\newcommand{\answername}{Answer}
\theoremstyle{plain}
    \newtheorem{theorem}{\theoremname}
    \newtheorem*{theorem*}{\theoremname}
  \newaliascnt{corollary}{theorem}
    \newtheorem{corollary}[corollary]{\corollaryname}
  \newaliascnt{lemma}{theorem}
    \newtheorem{lemma}[lemma]{\lemmaname}
  \newaliascnt{proposition}{theorem}
    \newtheorem{proposition}[proposition]{\propositionname}
\theoremstyle{definition}
  \newaliascnt{conjecture}{theorem}
    \newtheorem{conjecture}[conjecture]{\conjecturename}
  \newaliascnt{remark}{theorem}
    \newtheorem{remark}[remark]{\remarkname}
  \newaliascnt{example}{theorem}
    \newtheorem{example}[example]{\examplename}
  \newaliascnt{definition}{theorem}
  \newaliascnt{question}{theorem}
  \newaliascnt{answer}{theorem}
\newcommand\Obro{{\O}bro\xspace}
\DeclareMathOperator{\image}{im}
\title{Smooth Fano Polytopes With Many Vertices}
\author{Benjamin Assarf}
\author{Michael Joswig}
\address{Benjamin Assarf, Michael Joswig,\newline \mbox{ }\quad
TU Berlin, Inst.\ Mathematik, MA 6-2, Str. des 17. Juni 136, 10623 Berlin, Germany}
\email{\{assarf,joswig\}@math.tu-berlin.de}
\author{Andreas Paffenholz}
\address{Andreas Paffenholz,\newline \mbox{ }\quad TU Darmstadt, FB Mathematik, Dolivostr.\ 15, 64293 Darmstadt, Germany}
\email{paffenholz@mathematik.tu-darmstadt.de}
\thanks{The second and third authors are supported by the Priority Program 1489 ``Algorithmic and
  Experimental Methods in Algebra, Geometry and Number Theory'' of the German Research Council
  (DFG)}
\subjclass[2010]{52B20, 14M25, 14J45}
\keywords{toric Fano varieties, lattice polytopes, terminal polytopes, smooth polytopes}
\begin{document}

\begin{abstract}
  The $d$-dimensional simplicial, terminal, and reflexive polytopes with at least $3d-2$ vertices are classified.  In particular,
  it turns out that all of them are smooth Fano polytopes.  This improves on previous results of
  (Casagrande, 2006) and (\O{}bro, 2008).  Smooth Fano polytopes play a role in algebraic geometry and
  mathematical physics.
\end{abstract}

\maketitle

\section{Introduction}

A \emph{lattice polytope} $P$ is a convex polytope whose vertices lie in a lattice $N$ contained in
the vector space $\RR^d$.  Fixing a basis of $N$ describes an isomorphism to $\ZZ^d$.  Throughout
this paper, we restrict our attention to the standard lattice $N=\ZZ^d$. A $d$-dimensional lattice
polytope $P\subset\RR^d$ is called \emph{reflexive} if it contains the origin $\0$ as an interior
point and its polar polytope is a lattice polytope in the dual lattice $M:=\Hom(N,\ZZ)\cong\ZZ^d$.
A lattice polytope $P$ is \emph{terminal} if $\0$ and the vertices are the only lattice points in
$P\cap \ZZ^d$.  It is \emph{simplicial} if each face is a simplex.  We say that $P$ is a
\emph{smooth Fano polytope} if $P\subseteq \RR^d$ is simplicial with $\0$ in the interior and the
vertices of each facet form a lattice basis of $\ZZ^d$. The fan where every cone is the non negative
linear span over a face is called the \emph{face fan}. It is dual to the \emph{normal fan}, which is
the collection of all normal cones.

In algebraic geometry, reflexive polytopes correspond to \emph{Gorenstein toric Fano varieties}. The
toric variety $X_P$ of a polytope $P$ is determined by the face fan of $P$, that is, the fan spanned
by all faces of $P$; see Ewald \cite{Ewald96} or Cox, Little, and Schenck
\cite{Book_CoxLittleSchenck} for more details.  The toric variety $X_P$ is \emph{$\QQ$-factorial}
(some multiple of a Weil divisor is Cartier) if and only if the polytope $P$ is simplicial.  In this
case the \emph{Picard number} of $X$ equals $n-d$, where $n$ is the number of vertices of $P$.  The
polytope $P$ is smooth if and only if the variety $X_P$ is a manifold (that is, it has no
singularities).  Note that the notions detailed above are not entirely standardized in the
literature.  For example, our definitions agree with~\cite{1067.14052}, but disagree
with~\cite{KN5}.  Our main result, \autoref{thm:main}, is the following.

\begin{theorem*}
  Any $d$-dimensional \str lattice polytope with at least $3d-2$ vertices is lattice equivalent to a
  direct sum of del Pezzo polytopes, pseudo del Pezzo polytopes, or a (possibly skew and multiple)
  bipyramid over (pseudo) del Pezzo polytopes.  In particular, such a polytope is necessarily smooth
  Fano.
\end{theorem*}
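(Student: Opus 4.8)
The plan is to argue by induction on the dimension $d$. By Casagrande's theorem a simplicial reflexive $d$-polytope has at most $3d$ vertices, and she classified the case of equality; the case of $3d-1$ vertices was settled by \Obro. Hence it suffices to treat a \str polytope $P$ of dimension $d$ with exactly $n=3d-2$ vertices, and it is convenient to set up the induction for all $n\ge 3d-2$ at once. The induction starts in dimensions $d\le 4$, which amounts to a finite check: there the \str polytopes with at least $3d-2$ vertices form a short explicit list whose indecomposable members are the hexagon $\conv(\pm e_1,\pm e_2,\pm(e_1+e_2))$ and its pseudo-variant, the del Pezzo polytope $\conv(\pm e_1,\dots,\pm e_4,\pm(e_1+\dots+e_4))$, and a handful of small skew bipyramids over segments and over these del Pezzo polygons (for instance the smooth quadrilateral $\conv(\pm e_1,e_2,e_1-e_2)$) --- each of which is smooth Fano of the asserted shape.

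Next I would record, once and for all, that the target class $\mathcal C$ --- direct sums of (pseudo) del Pezzo polytopes and (possibly skew and multiple) bipyramids over such direct sums, segments $[-1,1]$ being admitted as the one-dimensional (pseudo) del Pezzo polytope --- consists of smooth Fano polytopes and is closed both under forming direct sums and under forming lattice (skew) bipyramids. The one slightly non-obvious ingredient is the identity $\operatorname{bipyr}(A\oplus B)\cong A\oplus\operatorname{bipyr}(B)$, together with its skew analogue in which the skewing vertex is taken in the relevant summand: it shows that iterated bipyramids over a direct sum already lie in $\mathcal C$, and that $\mathcal C$ is closed under $\oplus$. A by-product of this bookkeeping --- and the combinatorial engine that makes the induction go --- is that every \emph{indecomposable} member of $\mathcal C$ with at least $3d-2$ vertices has dimension at most $4$.

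Now the inductive step: let $P$ be \str of dimension $d\ge 5$ with $n\ge 3d-2$ vertices. The claim I would prove is that $P$ is a nontrivial direct sum $P=P_1\oplus P_2$. Granting this, each $P_i$ is \str of dimension $d_i<d$, and $n_1+n_2=n\ge 3d_1+3d_2-2$ together with Casagrande's bounds $n_i\le 3d_i$ forces $n_i\ge 3d_i-2$ for $i=1,2$; so the inductive hypothesis puts $P_1,P_2\in\mathcal C$ and closure under $\oplus$ gives $P\in\mathcal C$. (A skew bipyramid $P=\operatorname{bipyr}(Q)$ over a $(d-1)$-dimensional \str polytope $Q$ would be handled the same way --- $Q$ then has $n-2\ge 3(d-1)-1$ vertices and hence lies in $\mathcal C$ --- but by the dimension bound on indecomposables this case cannot actually occur once $d\ge 5$.) Everything is thus reduced to the direct-sum claim.

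To prove that claim I would run the special-facet technique of \Obro and Nill--\Obro. Pick a facet $F=\conv(x_1,\dots,x_d)$ with $\sum_{v\in\Vert P}v\in\pos(F)$, let $a$ be its primitive outer facet normal (so $\langle a,\cdot\rangle\le 1$ on $P$ with equality exactly on $F$), and stratify the vertices by height, $V_i=\{v\in\Vert P:\langle a,v\rangle=i\}$. Simpliciality gives $|V_1|=d$, while $\langle a,\sum_v v\rangle\ge 0$ gives $\sum_{i\le -1}(-i)\,|V_i|\le d$; combined with $n=3d-2$ these force $|V_0|\ge d-2$ and a tight budget on the negative-height vertices. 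One then analyses, as in the $3d-1$ case, how the many height-$0$ and height-$(-1)$ vertices attach to $F$: they should force $\{x_1,\dots,x_d\}$ to be a lattice basis and $\Vert P$ to partition into blocks, each spanning a coordinate subspace on which $P$ restricts to a (pseudo) del Pezzo polytope or a skew bipyramid over one, with $P$ the direct sum of these blocks; for $d\ge 5$ at least two blocks must occur, which is the required decomposition. I expect this last step to be the main obstacle --- not any single computation, but the case analysis it forces on the height profile $(|V_0|,|V_{-1}|,|V_{-2}|,\dots)$. Compared with the $3d$ and $3d-1$ cases there is more slack (the ``defect'' $n-3d$ is now $-2$ rather than $0$ or $-1$), so several additional local configurations of the height-$0$ and height-$(-1)$ vertices arise and must each be resolved, either by exhibiting a direct summand or by deriving a contradiction with the vertex count or with indecomposability; and there is the further step of passing from ``combinatorially a hexagon (or del Pezzo) block'' to ``lattice-equivalent to its standard normal form'', which is what actually produces smoothness and the precise list in the statement, and in particular rules out any non-smooth \str polytope with as many as $3d-2$ vertices.
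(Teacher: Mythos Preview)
Your overall strategy---choose a special facet, stratify the vertices by height against its normal, and use the resulting numerical constraints to force a decomposition---is exactly the paper's approach. The paper also organizes the argument by the list of admissible $\eta$-vectors $(|V_1|,|V_0|,|V_{-1}|,\dots)$ and by the eccentricity $\langle u_F,v_P\rangle\in\{0,1,2\}$, and it likewise reduces to the prior classifications of Casagrande and \Obro.

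There is one point where your framing diverges from the paper and deserves correction. You set up the induction so that the working claim is ``$P$ is a nontrivial direct sum'' and relegate the skew-bipyramid outcome to a parenthesis, asserting it ``cannot actually occur once $d\ge 5$''. In the paper's actual case analysis the \emph{generic} output of the special-facet method is precisely that $P$ is a (possibly skew) bipyramid over a $(d{-}1)$-dimensional simplicial, terminal, reflexive polytope $Q$ with $3(d{-}1)-1$ vertices; the cases of eccentricity~$1$, and most subcases of eccentricity~$0$ and~$2$, all end this way. One then invokes \Obro's theorem for $Q$, and only \emph{afterwards} does your identity $\operatorname{bipyr}(A\oplus B)\cong A\oplus\operatorname{bipyr}(B)$ convert this into a direct-sum description. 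So the right inductive claim is ``$P$ is a direct sum or a (skew) bipyramid over a polytope with at least $3(d{-}1)-1$ vertices'', and both branches genuinely occur in the analysis. Expecting the height stratification to directly produce a partition of $\Vert P$ into del Pezzo blocks is stronger than what the method delivers.

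Two further remarks. First, your base case $d\le 4$ by ``finite check'' is legitimate (the relevant polytopes are enumerated), but the paper instead treats all $d\ge 4$ uniformly; in particular the $\operatorname{DP}(4)$ summand arises not as a low-dimensional exception but from the centrally symmetric subcase of eccentricity~$0$ via the Voskresenski\u\i--Klyachko/Ewald/Nill structure theorem. Second, your numerology for the direct-sum branch is correct: if $P=P_1\oplus P_2$ with $n\ge 3d-2$ then Casagrande's bound $n_i\le 3d_i$ forces $n_i\ge 3d_i-2$, so the induction closes. With the bipyramid branch restored to its central role, your outline matches the paper's proof.
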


This extends results of Casagrande who proved that the number of vertices of $d$-di\-men\-sional \str lattice
polytopes does not exceed $3d$ and who showed that, up to lattice equivalence, only one type exists
which attains this bound (and the dimension $d$ is even)~\cite{Casagrande06}.  Moreover, our result
extends results of \Obro who classified the polytopes of the named kind with $3d-1$ vertices
\cite{Obro08}.  Our proof employs techniques similar to those used by \Obro~\cite{Obro08} and Nill
and \Obro~\cite{NillObro10}, but requires more organization since a greater variety of possibilities
occurs.  One benefit of our approach is that it suggests a certain general pattern emerging, and we
state this as \autoref{conjecture} below.  Translated into the language of toric varieties our main
result establishes that any $d$-dimensional terminal $\QQ$-factorial Gorenstein toric Fano variety
with Picard number at least $2d-2$ decomposes as a (possibly trivial) toric fiber bundle with
known fiber and base space; the precise statement is \autoref{cor:toric}.

The interest in such classifications has its origins in applications of algebraic geometry to mathematical
physics. For instance, Batyrev~\cite{Batyrev94calabi} uses reflexive polytopes to construct pairs of mirror
symmetric Calabi-Yau manifolds; see also Batyrev and Borisov~\cite{BB}. Up to unimodular equivalence, there exists
only a finite number of such polytopes in each dimension, and they have been classified up to dimension $4$, see
Batyrev \cite{Batyrev91}, Kreuzer and Skarke \cite{KreuzerSkarke3d,KreuzerSkarke4d}. Smooth reflexive polytopes
have been classified up to dimension $8$ by \Obro~\cite{OebroPhD}; see \cite{GRDB} for data. By enhancing \Obro's
implementation within the \texttt{polymake} framework \cite{DMV:polymake} this classification was extended to
dimension~$9$ \cite{smoothreflexive}; from that site the data is available in \texttt{polymake} format.

We are indebted to Cinzia Casagrande for helping to improve the description of the toric varieties
associated with the polytopes that we classify.  Finally, we thank an anonymous referee for very
careful reading; her or his comments lead to a number of improvements concerning the exposition.

\section{Lattice Polytopes}

Let $N\cong\ZZ^d$ be a lattice with associated real vector space $N_\RR:=N\otimes_\ZZ\RR$ isomorphic
to $\RR^d$.  The lattice $M:=\Hom_\ZZ(N,\ZZ)\cong \ZZ^d$ is dual to $N$ with dual vector space
$M_\RR=M\otimes_\ZZ\RR\cong \RR^d$.  A polytope $P\subset\RR^d$ is a \emph{lattice polytope} with
respect to $N$ if its\emph{ vertex set} $\Vert{P}$ is contained in $N$.  If the polytope $P$ is
full-dimensional and contains the origin $\0$ as an interior point, then the \emph{polar}
\[
P^* \ = \ \SetOf{w\in\RR^d}{\langle v,w\rangle\le 1 \text{ for all } v\in P}
\]
is a convex $d$-polytope, too, which also contains the origin as an interior point.  We always have
${(P^*)}^*=P$.  However, in general, the polar of a lattice polytope is not a lattice polytope. If $P$
is a lattice polytope in $M$ and the vertices of $P^*$ are contained in $M$, then the polytopes $P$
and $P^*$ are called \emph{reflexive}.  The lattice polytope $P$ is \emph{terminal} if $P\cap
N=\Vert{P}\cup\{\0\}$.  More generally, $P$ is \emph{canonical} if the origin is the only interior
lattice point in $P$.  Two lattice polytopes are \emph{lattice equivalent} if one can be mapped to
the other by a transformation in $\GL{d}{\ZZ}$ followed by a lattice translation.  Throughout the
paper we assume that our polytopes lie in the standard lattice $N=\ZZ^d$.

We start out with listing all possible types of $2$-dimensional terminal and reflexive lattice
polytopes in \autoref{fig:2dmin_Fano}.  Up to lattice equivalence five cases occur which we denote
as $P_6$, $P_5$, $P_{4a}$, $P_{4b}$, and $P_3$, respectively; one hexagon, one pentagon, two
quadrangles, and a triangle; see Ewald \cite[Thm.~8.2]{Ewald96}.  All of them are \emph{smooth Fano}
polytopes, that is, the origin lies in the interior and the vertex set of each facet forms a lattice
basis.  The only $1$-dimensional reflexive polytope is the interval $[-1,1]$.  See Cox et
al.~\cite[p.~382]{Book_CoxLittleSchenck} for the classification of all $2$-dimensional reflexive
polytopes (of which there are 16 types).
\begin{figure}[tb]
  \centering
  \subfigure[{$P_6$}]{
		\begin{tikzpicture}[scale=0.9]
		  \tikzstyle{edge} = [draw,thick,-,black]
		  
		  \foreach \x in {-1,0,1}
		    \foreach \y in {-1,0,1}
		       \fill[gray] (\x,\y) circle (1.5pt); ;
		
		  \coordinate (v0) at (0,0);
		  \coordinate (e1) at (1,0);
		  \coordinate (e2) at (0,1);
		  \coordinate (-e1+e2) at (-1,1);
		  \coordinate (-e2+e1) at (1,-1);
		  \coordinate (-e1) at (-1,0);
		  \coordinate (-e2) at (0,-1);
		  
		  \draw[edge] (e1) -- (e2) -- (-e1+e2) -- (-e1) -- (-e2) -- (-e2+e1) -- (e1);
		  
		  \foreach \point in {e1,e2,-e1+e2,-e2+e1,-e1,-e2}
		    \fill[black] (\point) circle (2pt);
		
		\end{tikzpicture}
  }
  \hfill
  \subfigure[{$P_5$}]{
		\begin{tikzpicture}[scale=0.9]
		  \tikzstyle{edge} = [draw,thick,-,black]
		
		  \foreach \x in {-1,0,1}
		    \foreach \y in {-1,0,1}
		       \fill[gray] (\x,\y) circle (1.5pt); ;
		
		  \coordinate (v0) at (0,0);
		  \coordinate (e1) at (1,0);
		  \coordinate (e2) at (0,1);
		  \coordinate (-e1+e2) at (-1,1);
		  \coordinate (-e2+e1) at (1,-1);
		  \coordinate (-e2) at (0,-1);
		  
		  \draw[edge] (e1) -- (e2) -- (-e1+e2) -- (-e2) -- (-e2+e1) -- (e1);
		  
		  \foreach \point in {e1,e2,-e1+e2,-e2+e1,-e2}
		    \fill[black] (\point) circle (2pt);
		  
		\end{tikzpicture}
  }
  \hfill
  \subfigure[{$P_{4a}$}\label{fig:P4a}]{
		\begin{tikzpicture}[scale=0.9]
		  \tikzstyle{edge} = [draw,thick,-,black]
		
		  \foreach \x in {-1,0,1}
		    \foreach \y in {-1,0,1}
		       \fill[gray] (\x,\y) circle (1.5pt); ;
		
		  \coordinate (v0) at (0,0);
		  \coordinate (e1) at (1,0);
		  \coordinate (e2) at (0,1);
		  \coordinate (-e1) at (-1,0);
		  \coordinate (-e2) at (0,-1);
		
		  \draw[edge] (e1) -- (e2) -- (-e1) -- (-e2) -- (e1);
		  
		  \foreach \point in {e1,e2,-e1,-e2}
		    \fill[black] (\point) circle (2pt);
		\end{tikzpicture}
  }
  \hfill
  \subfigure[{$P_{4b}$}\label{fig:P4b}]{
		\begin{tikzpicture}[scale=0.9]
		  \tikzstyle{edge} = [draw,thick,-,black]
		
		   \foreach \x in {-1,0,1}
		    \foreach \y in {-1,0,1}
		       \fill[gray] (\x,\y) circle (1.5pt); ;
		
		  \coordinate (v0) at (0,0);
		  \coordinate (e1) at (1,0);
		  \coordinate (e2) at (0,1);
		  \coordinate (-e2+e1) at (1,-1);
		  \coordinate (-e1) at (-1,0);
		  
		  \draw[edge] (e1) -- (e2) -- (-e1) -- (-e2+e1) -- (e1);
		  
		  \foreach \point in {e1,e2,-e2+e1,-e1}
		    \fill[black] (\point) circle (2pt);
		\end{tikzpicture}
  }
  \hfill
  \subfigure[{$P_{3}$}]{
		\begin{tikzpicture}[scale=0.9]
		  \tikzstyle{edge} = [draw,thick,-,black]
		  
		  \foreach \x in {-1,0,1}
		    \foreach \y in {-1,0,1}
		       \fill[gray] (\x,\y) circle (1.5pt); ;
		
		  \coordinate (v0) at (0,0);
		  \coordinate (e1) at (1,0);
		  \coordinate (e2) at (0,1);
		  \coordinate (-e1-e2) at (-1,-1);
		
		  \draw[edge] (e1) -- (e2) -- (-e1-e2) -- (e1);
		  
		  \foreach \point in {e1,e2,-e1-e2}
		    \fill[black] (\point) circle (2pt);
		\end{tikzpicture}
  }
  \caption{The $2$-dimensional reflexive and terminal lattice polytopes}
  \label{fig:2dmin_Fano}
\end{figure}
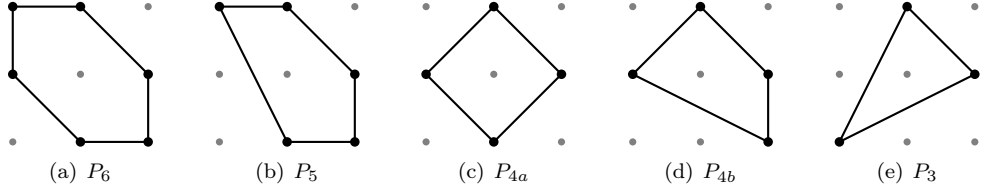

Let $P\subset\RR^d$ and $Q\subset\RR^e$ be polytopes with the origin in their respective relative
interiors. Then the polytope
\[
P\oplus Q \ = \ \conv(P\cup Q) \quad \subset \ \RR^{d+e}
\]
is the \emph{direct sum} of $P$ and $Q$.  This construction also goes by the name ``linear join'' of
$P$ and $Q$.  Clearly, forming direct sums is commutative and associative.  Notice that the polar
polytope $(P\oplus Q)^*=P^*\times Q^*$ is the direct product.  An important special case is the
\emph{proper bipyramid} $[-1,1]\oplus Q$ over $Q$.  More generally, we call a polytope $B$ a
\emph{(skew) bipyramid} over $Q$ if $Q$ is contained in an affine hyperplane $H$ such that there are
two vertices $v$ and $w$ of $B$ which lie on either side of $H$ such that $B=\conv(\{v,w\}\cup Q)$
and the line segment $[v,w]$ meets $Q$ in its (relative) interior.  The relevance of these
constructions for \str polytopes stems from the following three basic facts; see also \cite[\S
V.7.7]{Ewald96} and Figure~\ref{fig:bipyramids} below.  We include a short proof for the sake of
completeness.

\begin{lemma}\label{lemma:Fano+Fano=Fano}
  Let $P\subset\RR^d$ and $Q\subset\RR^e$ both be lattice polytopes. Then the direct sum $P\oplus
  Q\subset\RR^{d+e}$ is simplicial, terminal, or reflexive if and only if $P$ and $Q$ are.
\end{lemma}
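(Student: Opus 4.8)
The plan is to treat the three properties separately, and in each case to exploit the fact that faces of $P\oplus Q$ have a simple combinatorial description: every proper face of $P\oplus Q$ is of the form $\conv(F\cup G)$ where $F$ is a (possibly empty) face of $P$ and $G$ is a (possibly empty) face of $Q$, with at least one of them proper; and $F\oplus G$ is a face of $P\oplus Q$ iff $F$ is a face of $P$ and $G$ is a face of $Q$ with $(F,G)\ne(P,Q)$. Dually, I will use that $(P\oplus Q)^*=P^*\times Q^*$, which is stated in the excerpt. Each of the three equivalences is of the form ``$P\oplus Q$ has property $X$ $\iff$ $P$ and $Q$ both have property $X$''; in all cases one direction follows because $P$ and $Q$ sit inside $P\oplus Q$ as the faces cut out by the coordinate subspaces, so a defect in $P$ or $Q$ produces a defect in $P\oplus Q$, and the substantive direction is the converse.

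First I would handle \emph{simplicial}. A facet of $P\oplus Q$ is $\conv(F\cup G)$ where $F$ is a facet of $P$ and $G$ a facet of $Q$ (counting the empty set as the unique $(-1)$-face of the $(-1)$-dimensional empty polytope, so that a facet of $P\oplus Q$ restricted to $\RR^d$ is a facet of $P$ or, if $\dim P=0$ and $P=\{\0\}$... — more cleanly: since $\0$ is interior to both, the nonempty proper faces are exactly $F\oplus G$ for proper faces). The key point is that $\conv(F\cup G)$ is a simplex iff $F$ and $G$ are simplices and $\dim(F\oplus G)=\dim F+\dim G+1$, which always holds for a linear join of polytopes living in complementary coordinate subspaces. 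So $P\oplus Q$ is simplicial iff all such joins are simplices iff all facets of $P$ and all facets of $Q$ are simplices iff $P$ and $Q$ are simplicial.

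Next, \emph{reflexive}. Here I use the polar: $P\oplus Q$ is reflexive iff it is a lattice polytope with $\0$ interior whose polar is a lattice polytope. The vertices of $P\oplus Q$ are $\Vert P\cup\Vert Q\subset\ZZ^d\times\ZZ^e$, so $P\oplus Q$ is a lattice polytope iff $P$ and $Q$ are, and $\0$ is interior to $P\oplus Q$ iff it is interior to each (given both are full-dimensional in their own spaces). Finally $(P\oplus Q)^*=P^*\times Q^*$ is a lattice polytope in $M_\RR\times\RR^e$... — precisely in the dual lattice — iff both $P^*$ and $Q^*$ are lattice polytopes, since the vertices of a product are the pairs of vertices of the factors. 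Combining gives the reflexive equivalence.

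Lastly, \emph{terminal}. Suppose $P$ and $Q$ are terminal; I must show $(P\oplus Q)\cap\ZZ^{d+e}=\Vert{P\oplus Q}\cup\{\0\}$. Take a lattice point $(x,y)\in P\oplus Q$ with $(x,y)\neq\0$. Writing $(x,y)=\lambda p+\mu q$ with $p\in P$, $q\in Q$, $\lambda,\mu\ge0$, $\lambda+\mu=1$, the point $x=\lambda p$ lies in $\lambda P\subseteq P$ (as $\0\in P$) and similarly $y\in\mu Q\subseteq Q$, so $x\in P\cap\ZZ^d=\Vert P\cup\{\0\}$ and $y\in Q\cap\ZZ^e=\Vert Q\cup\{\0\}$. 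A short case analysis on whether $x$ and $y$ are $\0$ or a vertex, using that $(x,y)$ lies on the boundary of $P\oplus Q$ and that $(x,y)\ne\0$, forces $(x,y)$ to be $(v,\0)$ or $(\0,w)$ for a vertex $v$ of $P$ or $w$ of $Q$; these are exactly the vertices of $P\oplus Q$. The converse direction is immediate since $P=(P\oplus Q)\cap(\RR^d\times\0)$ and symmetrically for $Q$, so any extra lattice point in $P$ or $Q$ is an extra lattice point in $P\oplus Q$.

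I expect the only mildly delicate point to be the bookkeeping about empty faces and about the case where the ``mixed'' lattice point $(x,y)$ has both coordinates nonzero: one must check this cannot happen by observing that $(v,w)$ with $v\in\Vert P$, $w\in\Vert Q$ lies in the relative interior of the edge joining $(v,\0)$... no — rather, $(v,w)$ is not on the boundary of $P\oplus Q$ at all when... — the clean argument is that if $x=\lambda p$ with $\lambda\in(0,1)$ and $x$ a vertex $v$ of $P$, then $p=v/\lambda\notin P$, contradiction, so $\lambda\in\{0,1\}$, and likewise $\mu$, forcing $(\lambda,\mu)\in\{(1,0),(0,1)\}$; hence $(x,y)\in\{(v,\0),(\0,w)\}$. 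This makes the terminal case clean and is the heart of the argument.
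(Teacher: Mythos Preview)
Your proposal is correct and follows essentially the same approach as the paper: treat the three properties separately, use the face structure of the direct sum for simpliciality, the identity $(P\oplus Q)^*=P^*\times Q^*$ for reflexivity, and for terminality write a lattice point as $\lambda p+\mu q$ and observe that the two components $\lambda p\in\ZZ^d$ and $\mu q\in\ZZ^e$ lie in $P$ and $Q$ respectively, forcing $(\lambda,\mu)\in\{(1,0),(0,1)\}$. Your handling of the terminal case is in fact more explicit than the paper's, which compresses the ``$\lambda\in\{0,1\}$'' step into a single sentence.
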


\begin{proof}
  We check the three properties separately.  Faces of a direct sum are convex hulls of faces of the
  summands, so $P\oplus Q$ is simplicial if and only if $P$ and $Q$ are.

  The direct sum $P\oplus Q$ contains $\0$ by construction. The direct product $(P\oplus Q)^* =
  P^*\times Q^*$ is integral if and only if both $P^*$ and $Q^*$ are, so if and only if $P$ and $Q$
  are reflexive.

  Finally, if $P$ is not terminal, then there is a non-zero lattice point $x$ in $P$ which is not a
  vertex. By definition of $P\oplus Q$ we have $(x,0)\in P\oplus Q$, and this is not a vertex. So
  the sum is not terminal. Conversely, if $P\oplus Q$ is not terminal, then there is $y\in (P\oplus
  Q)\cap Z^{d+e}\setminus\{\0\}$ which is not a vertex of $P\oplus Q$. We can write $y=\lambda p+\mu
  q$ for points $p\in P$, $q\in Q$ and $\lambda, \mu\ge 0$ with $\lambda+\mu=1$. We can assume that
  $p,q\ne 0$. But $p$ and $q$ are in orthogonal subspaces, so $\lambda p\in P\cap \ZZ^d$ and $\mu
  q\in Q\cap \ZZ^e$. So $\lambda=1$ and $\mu=0$, or vice versa.
\end{proof}

Since the interval $[-1,1]$ is \str we arrive at the following direct consequence.

\begin{lemma}\label{lem:Fano=Bipyramid_over_Fano}
  Let $P=[-1,1]*Q$ be a proper bipyramid over a $(d{-}1)$-dimensional lattice polytope $Q$.  Then
  $P$ is simplicial, terminal or reflexive if and only if $Q$ has the corresponding property.
\end{lemma}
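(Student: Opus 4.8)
The plan is to recognize $P=[-1,1]*Q$ as the direct sum $[-1,1]\oplus Q$ and then simply invoke \autoref{lemma:Fano+Fano=Fano}. First I would fix coordinates: writing $\RR^d=\RR\times\RR^{d-1}$, place the two apexes of the proper bipyramid at $\pm 1$ on the first axis and let $Q$ sit inside $\{0\}\times\RR^{d-1}$. Because $P$ is a \emph{proper} bipyramid, the segment $[-1,1]$ (spanned by the apexes) meets $Q$ in its relative interior, so the origin lies in the relative interior of $Q$, and under this identification $P=\conv\bigl(([-1,1]\times\{0\})\cup(\{0\}\times Q)\bigr)$ is exactly the direct sum $[-1,1]\oplus Q$ in the sense of the previous section.

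Next I would record that the interval $[-1,1]\subset\RR^1$ is itself \str: every proper face is a single point and hence a simplex; the only lattice points of $[-1,1]$ are $-1$, $0$, and $1$, so it is terminal; and it is its own polar, $[-1,1]^*=[-1,1]$, so it is reflexive. With this in hand, \autoref{lemma:Fano+Fano=Fano} applied with the first summand equal to $[-1,1]$ yields that $P=[-1,1]\oplus Q$ is simplicial (respectively terminal, respectively reflexive) if and only if both $[-1,1]$ and $Q$ have that property; since $[-1,1]$ has all three, this holds if and only if $Q$ does, which is exactly the assertion.

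There is essentially no genuine obstacle here; the only point that deserves a sentence of care is the bookkeeping identifying the proper bipyramid with the direct sum — in particular verifying that the proper-bipyramid hypothesis forces the origin into the relative interior of $Q$, so that the direct-sum construction, and hence \autoref{lemma:Fano+Fano=Fano}, genuinely applies.
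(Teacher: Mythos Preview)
Your proposal is correct and matches the paper's approach exactly: the paper presents this lemma as an immediate consequence of \autoref{lemma:Fano+Fano=Fano} together with the observation that $[-1,1]$ is \str, without even writing out a formal proof. The only minor remark is that in the paper the proper bipyramid is \emph{defined} as the direct sum $[-1,1]\oplus Q$, so your ``bookkeeping'' step identifying the two is unnecessary (and the origin lying in the relative interior of $Q$ is a precondition of the direct-sum construction rather than something to be derived).
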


\begin{figure}[t]
  \centering
  \subfigure[{Proper bipyramid over $P_6$}]{
		\begin{tikzpicture}[x  = {(0cm,1.2cm)},
		                    y  = {(2cm,0cm)},
		                    z  = {(1cm,.4cm)},
		                    scale = 1,
		                    color = {lightgray}]
		% style of faces
		\tikzset{facestyle/.style={fill=green!20, opacity=.8,draw=black,line join=round}}
		
		\tikzstyle{every label}=[black]
		
		% Bildhoehe standardisieren
		\draw[white] (1.5,0,0) -- (-1.5,0,0);
		
		% hinten
		  \draw[facestyle] (1,0,0) -- (0,1,0) -- (0,0,1) -- (1,0,0) -- cycle ;
		  \draw[facestyle] (1,0,0) -- (0,-1,1) -- (0,0,1) -- (1,0,0) -- cycle ;
		  \draw[facestyle] (1,0,0) -- (0,-1,1) -- (0,-1,0) -- (1,0,0) -- cycle ;
		
		  \draw[facestyle] (-1,0,0) -- (0,1,0) -- (0,0,1) -- (-1,0,0) -- cycle ;
		  \draw[facestyle] (-1,0,0) -- (0,-1,1) -- (0,0,1) -- (-1,0,0) -- cycle ;
		  \draw[facestyle] (-1,0,0) -- (0,-1,1) -- (0,-1,0) -- (-1,0,0) -- cycle ;
		
		% Ecken hinten
		    \draw (0,0,1) node [label=right:\raisebox{3ex}{$\scriptstyle e_3$}] {};
		    \fill[black] (0,0,1) circle (2pt);
		    \fill[black] (0,-1,1) circle (2pt);
		
		% vorne
		  \draw[facestyle] (1,0,0) -- (0,-1,0) -- (0,0,-1) -- (1,0,0) -- cycle ;
		  \draw[facestyle] (1,0,0) -- (0,1,-1) -- (0,0,-1) -- (1,0,0) -- cycle ;
		  \draw[facestyle] (1,0,0) -- (0,1,-1) -- (0,1,0) -- (1,0,0) -- cycle ;

		  \draw[facestyle] (-1,0,0) -- (0,-1,0) -- (0,0,-1) -- (-1,0,0) -- cycle ;
		  \draw[facestyle] (-1,0,0) -- (0,1,-1) -- (0,0,-1) -- (-1,0,0) -- cycle ;
		  \draw[facestyle] (-1,0,0) -- (0,1,-1) -- (0,1,0) -- (-1,0,0) -- cycle ;
		
		% Ecken vorne
		  \draw (1,0,0) node [label=above:$\scriptstyle e_1$] {};
		  \draw (-1,0,0) node [label=below:$\scriptstyle -e_1$] {};
		  \draw (0,1,0) node [label=right:$\scriptstyle e_2$] {};
		  \fill[black] (1,0,0) circle (2pt);
		  \fill[black] (-1,0,0) circle (2pt);
		  \fill[black] (0,1,0) circle (2pt);
		  \fill[black] (0,-1,0) circle (2pt);
		  \fill[black] (0,1,-1) circle (2pt);
		  \fill[black] (0,0,-1) circle (2pt);
		
		\end{tikzpicture} 
  }
  \subfigure[{Skew bipyramid over $P_6$}]{
		\begin{tikzpicture}[x  = {(0cm,1.2cm)},
		                    y  = {(2cm,0cm)},
		                    z  = {(1cm,.4cm)},
		                    scale = 1,
		                    color = {lightgray}]
		% style of faces
		\tikzset{facestyle/.style={fill=green!20, opacity=.8,draw=black,line join=round}}
		\tikzstyle{vertex}=[circle,minimum size=3pt,inner sep=0pt, fill=black]
		\tikzstyle{every label}=[black]
		
		% Bildhoehe standardisieren
		\draw[white] (1.5,0,0) -- (-1.5,0,0);
		
		% hinten
		  \draw[facestyle] (1,0,0) -- (0,1,0) -- (0,0,1) -- (1,0,0) -- cycle ;
		  \draw[facestyle] (1,0,0) -- (0,-1,1) -- (0,0,1) -- (1,0,0) -- cycle ;
		  \draw[facestyle] (1,0,0) -- (0,-1,1) -- (0,-1,0) -- (1,0,0) -- cycle ;
		
		  \draw[facestyle] (-1,0,1) -- (0,1,0) -- (0,0,1) -- (-1,0,1) -- cycle ;
		  \draw[facestyle] (-1,0,1) -- (0,-1,1) -- (0,0,1) -- (-1,0,1) -- cycle ;
		  \draw[facestyle] (-1,0,1) -- (0,-1,1) -- (0,-1,0) -- (-1,0,1) -- cycle ;
		
		  \draw[facestyle] (-1,0,1) -- (0,-1,0) -- (0,0,-1) -- (-1,0,1) -- cycle ;
		
		% Ecken hinten
		    \draw (0,0,1) node[label=right:\raisebox{3ex}{$\scriptstyle e_3$}] {};
		    \fill[black] (0,0,1) circle (2pt);
		    \fill[black] (0,-1,1) circle (2pt);
		
		% vorne
		  \draw[facestyle] (1,0,0) -- (0,-1,0) -- (0,0,-1) -- (1,0,0) -- cycle ;
		  \draw[facestyle] (1,0,0) -- (0,1,-1) -- (0,0,-1) -- (1,0,0) -- cycle ;
		  \draw[facestyle] (1,0,0) -- (0,1,-1) -- (0,1,0) -- (1,0,0) -- cycle ;

		  \draw[facestyle] (-1,0,1) -- (0,1,-1) -- (0,0,-1) -- (-1,0,1) -- cycle ;
		  \draw[facestyle] (-1,0,1) -- (0,1,-1) -- (0,1,0) -- (-1,0,1) -- cycle ;
		
		% Ecken vorne
		  \draw (1,0,0) node[label=above:$\scriptstyle e_1$] {};
		  \draw (-1,0,1) node[label=below:$\scriptstyle e_3-e_1$] {};
		  \draw (0,1,0) node[label=right:$\scriptstyle e_2$] {};
		  \fill[black] (1,0,0) circle (2pt);
		  \fill[black] (-1,0,1) circle (2pt);
		  \fill[black] (0,1,0) circle (2pt);
		  \fill[black] (0,-1,0) circle (2pt);
		  \fill[black] (0,1,-1) circle (2pt);
		  \fill[black] (0,0,-1) circle (2pt);
		
		\end{tikzpicture} 
  }
  \caption{The $3$-dimensional smooth Fano polytopes with $3d-1=8$ vertices.
    Combinatorially, both are bipyramids over~$P_6$.}
  \label{fig:bipyramids}
\end{figure}
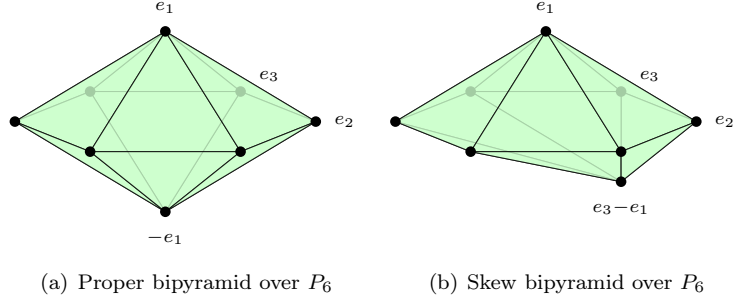

In our proofs below we will frequently encounter the following situation. Let $Q$ be a
$(d{-}1)$-dimensional lattice polytope embedded in $\{\0\}\times\RR^{d-1}\subset \RR^d$, let $e_1$
be the first standard basis vector of $\RR^d$, and let $v$ be a vertex of $Q$ such that the line
segment $\conv\{e_1,v-e_1\}$ intersects $Q$ in the relative interior.  This is to say that
$P:=\conv(\{e_1,v-e_1\}\cup Q)\subset\RR^d$ is a skew bipyramid over $Q$. In this case an argument
similar to the proof of \autoref{lemma:Fano+Fano=Fano} yields a suitable variation of
\autoref{lem:Fano=Bipyramid_over_Fano}.

\begin{lemma}\label{lem:skew-bipyramid}
  The skew bipyramid $P$ is simplicial, terminal or reflexive if and only if $Q$ has the
  corresponding property.
\end{lemma}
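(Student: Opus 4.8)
The plan is to follow the template of the proof of \autoref{lemma:Fano+Fano=Fano}, verifying ``simplicial'', ``terminal'' and ``reflexive'' one at a time, after fixing coordinates and recording two elementary facts about $P$. Write the vertex $v$ of $Q$ as $v=(0,v')$ with $v'\in\ZZ^{d-1}$, so that the apexes of $P$ are $a=e_1=(1,\0)$ and $b=v-e_1=(-1,v')$; the identity $a+b=v$ will play the role of the orthogonality exploited in \autoref{lemma:Fano+Fano=Fano}. First I would check that $P\cap(\{0\}\times\RR^{d-1})=Q$: a convex combination $\alpha a+\beta b+\sum_i\gamma_i q_i$ of vertices of $P$ (with $q_i\in\Vert{Q}$) has first coordinate $\alpha-\beta$, so it lies in the hyperplane $\{x_1=0\}$ exactly when $\alpha=\beta$, and then it equals $\alpha v+\sum_i\gamma_i q_i+\alpha\,\0$, a convex combination of the points $v,q_i,\0$ of $Q$, hence a point of $Q$. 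Second, I would record the combinatorial structure: $P$ is a bipyramid over $Q$ in the usual combinatorial sense, so its proper faces are precisely the faces of $Q$ together with the pyramids $\conv(G\cup\{e_1\})$ and $\conv(G\cup\{v-e_1\})$ over proper faces $G$ of $Q$; in particular the facets of $P$ are $\conv(F\cup\{e_1\})$ and $\conv(F\cup\{v-e_1\})$ as $F$ ranges over the facets of $Q$.

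With these in hand, the \emph{simplicial} equivalence is immediate, since a pyramid is a simplex iff its base is, so $P$ is simplicial iff every facet of $Q$ is. For \emph{terminal}, I would note that the first coordinate of every point of $P$ lies in $[-1,1]$, and that the value $1$ is attained only at $e_1$ and the value $-1$ only at $v-e_1$ (both read off from the convex-combination description, using $\alpha,\beta\ge0$ and $\alpha+\beta\le1$). Hence an integral point of $P$ either has first coordinate $\pm1$, whence it is $e_1$ or $v-e_1$, or lies in $P\cap\{x_1=0\}=Q$, whence by terminality of $Q$ it is a vertex of $Q$ or the origin; in all cases it lies in $\Vert{P}\cup\{\0\}$. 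The converse follows by intersecting $P\cap\ZZ^d=\Vert{P}\cup\{\0\}$ with $\{x_1=0\}$ and using the first observation.

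For \emph{reflexive} I would use the characterization that a lattice $d$-polytope with $\0$ in its interior is reflexive iff every facet lies on a hyperplane $\{\langle w,\cdot\rangle=1\}$ with $w\in\ZZ^d$. Assume $Q$ is reflexive, and let $F$ be a facet of $Q$, lying on $\{\langle u_F,\cdot\rangle=1\}$ for some $u_F\in\ZZ^{d-1}$ with $\langle u_F,\cdot\rangle\le 1$ on $Q$. Then $(1,u_F)\in\ZZ^d$ is a supporting functional of $P$ along $\conv(F\cup\{e_1\})$: it equals $1$ on $e_1$ and on $F$, it is $\langle u_F,q'\rangle\le 1$ on a point $(0,q')$ of $Q$, and it is $-1+\langle u_F,v'\rangle\le 0$ on $v-e_1$, where the last inequality uses that $v$ is a lattice point of the reflexive polytope $Q$, so $\langle u_F,v'\rangle\le 1$. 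Symmetrically, $(\langle u_F,v'\rangle-1,\,u_F)\in\ZZ^d$ supports $P$ along $\conv(F\cup\{v-e_1\})$, its value on $e_1$ being $\langle u_F,v'\rangle-1\le 0\le 1$. Together with $\0\in\operatorname{relint}(Q)$ and the fact that $e_1,v-e_1$ lie strictly on opposite sides of $\{x_1=0\}$, this yields $\0\in\operatorname{int}(P)$ and shows $P$ reflexive. For the converse, $\0\in\operatorname{int}(P)$ restricts to $\0\in\operatorname{relint}(Q)$, and a facet functional $(w_1,w')\in\ZZ^d$ of $P$ along $\conv(F\cup\{e_1\})$ satisfies $w_1=1$ and restricts to the integral functional $w'$ exhibiting $F$ as a facet of $Q$ at height $1$; hence $Q$ is reflexive.

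I expect the only genuinely delicate point to be the reflexive direction, and within it the observation that $\langle u_F,v'\rangle\le 1$ holds for every facet functional $u_F$ of $Q$ — this is exactly what makes the lifted functionals valid for $P$. Everything else is bookkeeping with convex combinations, together with the combinatorial description of a bipyramid, which I would either prove in a line or two or simply cite.
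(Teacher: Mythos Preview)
Your proof is correct and follows essentially the same approach as the paper's own argument: both check the three properties separately, both use that the facets of $P$ are pyramids over facets of $Q$ for simpliciality, both use the level decomposition in the $e_1$-direction for terminality, and both compute the explicit facet normals $(1,u_F)$ and $(\langle u_F,v'\rangle-1,\,u_F)$ for reflexivity. Your write-up is somewhat more explicit about verifying $P\cap\{x_1=0\}=Q$ and about checking that the lifted functionals are actually valid on all of $P$, but the underlying ideas are identical.
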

\begin{proof}
  As in the proof of Lemma~\ref{lemma:Fano+Fano=Fano} we check the three properties one by one.

  The facets of $P$ are pyramids over facets of $Q$.  A pyramid is a simplex if and only if its base
  is a simplex of one dimension lower.  This means that $P$ is simplicial if and only if $Q$ is.

  A copy of $Q$ arises as the intersection of $P$ with the coordinate subspace, $H$, spanned by
  $e_2,e_3,\dots,e_d$.  Hence the terminality of $P$ implies the terminality of $Q$.  Conversely,
  assume that $Q$ is terminal.  Any non-vertex lattice point of $P$ would have to lie outside the
  hyperplane $H$.  Yet, by construction, the only two vertices outside $H$ are at levels $1$ and
  $-1$; so there is no room for any lattice point in $P$ other than the origin.

  Let us assume that the origin is an interior lattice point of $Q$.  Then each facet $G$ of $Q$ has
  a uniquely determined outer facet normal vector $u_G$ such that an inequality which defines $G$
  reads $\langle u_G,x\rangle\le 1$.  We call this the \emph{standard outer facet normal vector}
  of~$G$. If $F=\conv(G\cup\{v-e_1\})$ is a facet of $P$ which arises as a pyramid over $G$ then the
  standard outer facet normal vector $u_F$ satisfies
  \[
  \langle u_F,e_1\rangle \ = \ \langle u_G,v\rangle - 1 \quad \text{and} \quad
  \langle u_F,e_j\rangle \ = \ \langle u_G,e_j\rangle \text{ for all } j\ge 2 \,.
  \]
  This shows that $u_F$ is integral if and only $u_G$ is.  For the remaining facets of $P$, all of
  which contain $e_1$, the vertex antipodal to $v-e_1$, we have $\langle u_F,e_1\rangle=-1$.
  Therefore, $P$ is reflexive if and only if $Q$ is.
\end{proof}

Let $e_1,e_2, \ldots, e_d$ be the standard basis of $\ZZ^d$ in $\RR^d$. The $d$-polytopes
\[
\HSBC{d} \ = \ \conv\{\pm e_1,\pm e_2, \dots, \pm e_d, \pm \1\} \quad \subset \ \RR^d
\]
for $d$ even with $2d+2$ vertices form a $1$-parameter family of smooth Fano polytopes; see
Ewald~\cite[\S V.8.3]{Ewald96}.  They are usually called \emph{del Pezzo} polytopes.  If $-\1$ is
not a vertex the resulting polytopes are sometimes called \emph{pseudo del Pezzo}.  Here and
throughout we abbreviate $\1=(1,1,\dots,1)$.  Notice that the $2$-dimensional del Pezzo polytope
$\HSBC{2}$ is lattice equivalent to the hexagon $P_6$ shown in \autoref{fig:2dmin_Fano}, and the
$2$-dimensional pseudo del Pezzo polytope is the pentagon $P_5$.  While the definition of $\HSBC{d}$
also makes sense in odd dimensions, these polytopes are not simplicial if $d\ge 3$ is odd.

For \emph{centrally symmetric} smooth Fano polytopes Voskresenski\u\i{} and
Klyachko~\cite{Voskresenskij1985} provide a classification result. They showed that every centrally
symmetric smooth Fano polytope can be written as a sum of line segments and del Pezzo
polytopes. Later Ewald~\cite{Ewald88,Ewald96} generalized this classification for pseudo-symmetric
polytopes. A polytope is \emph{pseudo-symmetric} if there exists a facet $F$, such that $-F =
\SetOf{-v}{v\in F}$ is also a facet.  Nill~\cite{0511294} extended Ewald's result to simplicial
reflexive polytopes.

\begin{theorem}[{Nill~\cite[Thm.~0.1]{0511294}}]\label{thm:pseudo_symmetric}
  Any pseudo-symmetric simplicial and reflexive polytope is lattice equivalent to a direct sum of a
  centrally symmetric reflexive cross polytope, del Pezzo polytopes, and pseudo del Pezzo polytopes.
\end{theorem}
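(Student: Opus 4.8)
The plan is to cut the polytope open along the pseudo-symmetric facet, pass to the ``middle slice'' determined by its normal vector, and recognise the combinatorial blocks living in that slice. Concretely, fix a facet $F$ with $-F$ also a facet and let $u\in M$ be the primitive outer normal vector of $F$; it lies in $M$ because $P$ is reflexive, and $-u$ is then the primitive outer normal vector of $-F$. Let $v_1,\dots,v_d$ be the vertices of $F$; they are linearly independent since $F$ is a facet of a full-dimensional polytope. Every vertex $w$ of $P$ satisfies $\langle u,w\rangle\in\ZZ$ and $-1\le\langle u,w\rangle\le 1$, the two inequalities being merely the facet inequalities of $-F$ and of $F$, so $\langle u,w\rangle\in\{-1,0,1\}$. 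As $F$ and $-F$ are simplices, the only vertices at level $1$ are $v_1,\dots,v_d$ and the only ones at level $-1$ are $-v_1,\dots,-v_d$; writing $W$ for the set of vertices at level $0$, we obtain $P=\conv\bigl(\{\pm v_1,\dots,\pm v_d\}\cup W\bigr)$.

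The next and main step is to analyse $W$. Expanding $w\in W$ in the real basis $v_1,\dots,v_d$ as $w=\sum_i c_i(w)\,v_i$, we have $\sum_i c_i(w)=\langle u,w\rangle=0$. The decisive input --- and the point where \emph{simpliciality} really enters --- is the structural lemma for simplicial reflexive polytopes bounding the coefficients of a vertex in terms of the vertices of a facet; combined with the fact that both $v_i$ and $-v_i$ are vertices (here \emph{pseudo-symmetry} re-enters) it forces $c_i(w)\in\{-1,0,1\}$ for all $i$, and, applied to pairs of level-zero vertices, it tightly restricts how their supports may overlap. Granting this, I would form the graph on $\{1,\dots,d\}$ joining $i$ and $j$ whenever some $w\in W$ has $c_i(w)\ne 0\ne c_j(w)$, take its connected components $C_1,\dots,C_k$, and show: each $w\in W$ is supported on a single component; the $v_i$ with $i\in C_\ell$ form a $\ZZ$-basis of a direct summand $N_\ell$ of $N$, with $N=N_1\oplus\dots\oplus N_k$; and, correspondingly, $P=P_1\oplus\dots\oplus P_k$ where $P_\ell:=\conv\bigl(\{\pm v_i:i\in C_\ell\}\cup\{w\in W:\operatorname{supp}(w)\subseteq C_\ell\}\bigr)\subset(N_\ell)_\RR$. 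That such a splitting is compatible with being simplicial and reflexive is \autoref{lemma:Fano+Fano=Fano}.

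Finally I would identify the blocks. A component of size one gives $P_\ell=\conv\{v_i,-v_i\}\cong[-1,1]$, and the direct sum of all size-one blocks is a centrally symmetric reflexive cross polytope. For a component of size at least two one shows, after possibly replacing $F$ by the pseudo-symmetric facet arising from a diagonal sign change of some of the $v_i$, that the level-zero vertices of $P_\ell$ are exactly $\1_{C_\ell}=\sum_{i\in C_\ell}v_i$ and possibly also $-\1_{C_\ell}$; thus $P_\ell$ is lattice equivalent to the del Pezzo polytope $\HSBC{\lvert C_\ell\rvert}$ (so $\lvert C_\ell\rvert$ is even) if both are vertices, and to the corresponding pseudo del Pezzo polytope if only $\1_{C_\ell}$ is. Reassembling the blocks yields the claimed decomposition. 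I expect the real obstacle to be exactly this recognition step, together with the coefficient bound feeding it: one has to exclude configurations such as $v_i-v_j$ being a vertex simultaneously with $v_j-v_\ell$, or two independent balanced $\pm1$ vertices lying in the same block, and this needs a careful simultaneous exploitation of simpliciality (every ridge lies on exactly two facets, whose normals are integral by reflexivity) and of pseudo-symmetry --- the same kind of bookkeeping that has to be done, in far greater generality, for the main theorem of this paper.
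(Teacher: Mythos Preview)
The paper does not prove this theorem at all: \autoref{thm:pseudo_symmetric} is quoted verbatim from Nill~\cite[Thm.~0.1]{0511294} and used as a black box (it feeds into \autoref{cor:centrally_symmetric}). There is therefore no ``paper's own proof'' to compare your proposal against.

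That said, your outline is broadly the right shape for how such a result is attacked --- slicing by the normal of the pseudo-symmetric facet, analysing the level-zero vertices in the facet basis, and splitting along support components --- and is in the spirit of Nill's argument. You are honest about where the real work lies. Two points are worth flagging. First, you use that the $v_i$ form a $\ZZ$-basis of a direct summand $N_\ell$ of the lattice; this is \emph{not} automatic for a merely simplicial reflexive polytope (it is exactly the smoothness condition), so this step needs to be extracted from the structure, not assumed. Second, excluding mixed configurations in a block (your example $v_i-v_j$ together with $v_j-v_\ell$) is indeed the heart of the matter and is where Nill's paper does the careful work; your sketch correctly identifies this as the gap rather than glossing over it.
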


A direct sum of $d$ intervals $[-1,1]\oplus[-1,1]\oplus\dots\oplus[-1,1]$ is the same as the regular
cross polytope $\conv\{\pm e_1,\pm e_2,\dots,\pm e_d\}$, which is centrally symmetric and reflexive.  The direct sum of several intervals with a
polytope $Q$ is the same as an iterated proper bipyramid over $Q$.

\begin{theorem}[{Casagrande~\cite[Thm.~3]{Casagrande06}}]
  A simplicial and reflexive $d$-polytope $P$ has at most $3d$ vertices.  If it does have exactly
  $3d$ vertices then $d$ is even, and $P$ is a centrally symmetric smooth Fano polytope.  Thus in
  this case $P$ is lattice equivalent to a direct sum of $\frac{d}{2}$ copies of $P_6\cong\HSBC{2}$.
\end{theorem}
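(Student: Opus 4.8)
The plan is to single out one carefully chosen facet of $P$, to classify the vertices of $P$ by their ``height'' over that facet, and then, in the extremal case, to combine the resulting rigidity with the structure theorem for pseudo-symmetric polytopes, \autoref{thm:pseudo_symmetric}. \emph{A special facet.} Since the cones $\pos(G)$ over the facets $G$ of $P$ form a complete fan, I first fix a facet $F$ whose cone contains the vertex sum $\nu:=\sum_{v\in\VertNB(P)}v$; call such an $F$ \emph{special}. Let $x_1,\dots,x_d$ be the vertices of $F$ and $u_F\in M$ its primitive outer normal, so $\langle u_F,x_i\rangle=1$ and, $P$ being reflexive, $\psi:=\langle u_F,\,\cdot\,\rangle$ takes integer values $\le 1$ on $\VertNB(P)$, with $\psi(v)=1$ exactly for $v\in\{x_1,\dots,x_d\}$ because $F$ is a facet. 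As $u_F$ is strictly positive on $\pos(F)\setminus\{\0\}$, specialness says $\langle u_F,\nu\rangle\ge 0$, that is,
\[
0\ \le\ \sum_{v\in\VertNB(P)}\psi(v)\ =\ d+\!\!\!\sum_{v\,:\,\psi(v)\le 0}\!\!\!\psi(v)\,.
\]
Putting $V_0:=\{v\in\VertNB(P):\psi(v)=0\}$ and $V_{<0}:=\{v\in\VertNB(P):\psi(v)\le-1\}$, this already yields $|V_{<0}|\le d$, while $n=d+|V_0|+|V_{<0}|$.

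\emph{The bound $n\le 3d$.} It then remains to prove $|V_0|\le d$, and this is the step I expect to be the main obstacle. The tool is wall-crossing over the $d$ ridges $R_i:=\conv(x_j:j\ne i)$ of $F$: simpliciality gives a unique facet $F_i\ne F$ with $R_i\subset F_i$, its outer normal agrees with $u_F$ on $\lin(R_i)$, hence $u_{F_i}=u_F-c_i n_i$, where $n_i\in M_\RR$ is determined by $\langle n_i,x_j\rangle=\delta_{ij}$ and $c_i=1-\langle u_{F_i},x_i\rangle$ is a positive integer (positive because $x_i\notin F_i$, an integer because $u_F,u_{F_i}\in M$ and $x_i\in N$). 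Writing a vertex $v$ in the linear basis $x_1,\dots,x_d$ and feeding it into the reflexivity inequalities $\langle u_{F_i},v\rangle\le 1$ bounds the coordinates of $v$ from below in terms of $\psi(v)$ and the $c_i$; the delicate point is to use these bounds to attach to each $v\in V_0$ a ridge of $F$ in an injective way, which caps $|V_0|$ by the number $d$ of ridges. This bookkeeping, which is in essence Casagrande's argument in \cite{Casagrande06}, is where the real work sits; granting it, $n=d+|V_0|+|V_{<0}|\le 3d$.

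\emph{The extremal case.} Suppose $n=3d$. Then every estimate above is tight: $|V_0|=|V_{<0}|=d$, each $v\in V_{<0}$ has $\psi(v)=-1$, and $\langle u_F,\nu\rangle=0$. As this holds for an \emph{arbitrary} special facet, and as $u_G$ is strictly positive on $\pos(G)\setminus\{\0\}$ for every facet $G$, a nonzero $\nu$ would lie in some cone $\pos(G)$ with $\langle u_G,\nu\rangle>0$, contradicting tightness at the (then special) facet $G$; hence $\nu=\0$ and \emph{every} facet of $P$ is special. Re-running the count over an arbitrary facet shows that every vertex of $P$ has height in $\{-1,0,1\}$ over every facet, and from this rigidity I would deduce that the $d$ height-$(-1)$ vertices over $F$ are precisely $-x_1,\dots,-x_d$, so $-F$ is again a facet and $P$ is centrally symmetric (in fact smooth Fano, the wall-crossings having become unimodular). \autoref{thm:pseudo_symmetric} then writes $P$ as a direct sum of a centrally symmetric reflexive cross polytope and several del Pezzo and pseudo del Pezzo polytopes. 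A $k$-dimensional reflexive cross polytope has $2k$ vertices, the del Pezzo polytope $\HSBC{k}$ has $2k+2$, and the pseudo del Pezzo one $2k+1$; in every case a summand has at most three times as many vertices as its dimension, with equality only for $\HSBC{2}\cong P_6$. Since vertex numbers and dimensions of the summands add up, the total $n=3d$ can be attained only when every summand is $P_6$; in particular $d$ is even, and $P$ is lattice equivalent to the direct sum of $\tfrac d2$ copies of $P_6\cong\HSBC{2}$, which by \autoref{lemma:Fano+Fano=Fano} is a centrally symmetric smooth Fano polytope.
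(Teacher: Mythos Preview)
The paper does not give its own proof of this statement; it is quoted as Casagrande's theorem \cite{Casagrande06} and used as background for the later classification, so there is no in-paper argument to compare against.

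On its own merits your outline is the standard one and is essentially correct, but two load-bearing steps are left as gestures. The bound $|V_0|\le d$, which you ``grant'', is in fact immediate from the paper's machinery: \autoref{cor:opposite_at_0} (a consequence of \autoref{prop:nill:5.5}, valid for simplicial reflexive $P$) says every level-$0$ vertex is $\opp(F,x_i)$ for some $i$, so there are at most $d$ of them. The second gap, where you write ``I would deduce that the $d$ height-$(-1)$ vertices over $F$ are precisely $-x_1,\dots,-x_d$'', is also unnecessary in this form: you have already established $v_P=\0$ and $\eta^G_\ell=0$ for every facet $G$ and all $\ell\le -2$, which is exactly the hypothesis of \autoref{prop:centrallySymmetric}, so $P$ is centrally symmetric without ever identifying the level-$(-1)$ set. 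With those two citations inserted your argument is complete; the closing vertex count against the summands in \autoref{thm:pseudo_symmetric} is clean (and in fact pseudo del Pezzo summands are excluded already by central symmetry, so you need not count them).
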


\Obro classified the \str $d$-polytopes with $3d-1$
vertices~\cite{Obro08}.  We describe the cases which occur, again up to lattice equivalence.  To get
an idea it is instrumental to look at the low-dimensional cases first.  For instance, the interval
$[-1,1]$ is of this kind since it has the right number $3\cdot 1-1=2$ vertices.  In dimension two we
only have the pentagon $P_5$.  In dimension three two cases arise which are combinatorially
isomorphic but inequivalent as lattice polytopes: the proper bipyramid $[-1,1]\oplus P_6$, which has
the vertices
\[
\pm e_1\,,\ \pm e_2\,,\ \pm e_3\,,\ \pm(e_2-e_3)
\]
and the skew bipyramid over $P_6$ with vertices
\[
e_1\,,\ \pm e_2\,,\ \pm e_3\,,\ \pm (e_2-e_3)\,,\ e_3-e_1 \, ;
\]
see \autoref{fig:bipyramids}.  The apices of the proper bipyramid are $\pm e_1$, and in the skew
bipyramid the vertices $e_1$ and $e_3-e_1$ form the apices.

By forming suitable direct sums we can construct more smooth Fano $d$-polytopes with $3d-1$
vertices.  For $d$ even the polytope $P_5 \oplus P_6^{\oplus (\frac{d}{2}-1)}$ is $d$-dimensional,
and it has $3d-1$ vertices.  Up to a lattice isomorphism these can be chosen as follows.
\begin{equation} \label{eq:obro:even}
  \begin{gathered}
    e_1\,,\ \pm e_2\,,\ \ldots\,,\ \pm e_{d}\\
    \pm(e_1-e_2)\,,\ \pm(e_3-e_4)\,,\ \ldots \,,\ \pm (e_{d-1} - e_{d}) \, .
  \end{gathered}
\end{equation}
For an odd number $d$ we can sum up $\frac{d-1}{2}$ copies of $P_6$ and form the bipyramid.  The
resulting $d$-dimensional polytope has the vertices
\begin{equation} \label{eq:obro:odd-not-skew}
  \begin{gathered}
    \pm e_1\,,\ \pm e_2\,,\ \ldots\,,\ \pm e_{d}\\
    \pm(e_2-e_3)\,,\ \pm(e_4-e_5)\,,\ \ldots \,,\ \pm (e_{d-1} - e_{d}) \, .
  \end{gathered}
\end{equation}
Notice that the eight vertices $\pm e_1,\pm e_{d-1},\pm e_d,\pm(e_{d-1}-e_d)$ form a $3$-dimensional
bipyramid $[-1,1]\oplus P_6$.  Replacing this summand by a three dimensional skew bipyramid over
$P_6$ yields a polytope with vertices
\begin{equation} \label{eq:obro:odd-skew}
  \begin{gathered}
    \pm e_1\,,\ \pm e_2\,,\ \ldots\,,\ \pm e_{d-1}\,,\ e_{d}\\
    \pm(e_1-e_2)\,,\ \pm(e_3-e_4)\,,\ \ldots \,,\ \pm (e_{d-2} - e_{d-1})\,,\ (e_1 - e_{d}) \, .
  \end{gathered}
\end{equation}

\begin{theorem}[{\Obro~\cite[Thm.~1]{Obro08}}]\label{thm:3d-1}
  A \str lattice $d$-polytope with exactly $3d-1$ is lattice
  equivalent to the polytope \eqref{eq:obro:even} if $d$ is even, and it is lattice equivalent to
  either \eqref{eq:obro:odd-not-skew} or \eqref{eq:obro:odd-skew} if $d$ is odd.
\end{theorem}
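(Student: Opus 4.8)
The plan is to follow \Obro's approach of studying the vertices of $P$ relative to a \emph{special facet}, and then to feed the resulting structure into the decomposition lemmas already available. So suppose $P$ is a \str $d$-polytope with $3d-1$ vertices; we may assume $d\ge3$, since the cases $d\le 2$ are covered by the classification of reflexive polygons recalled above. First I would fix a facet $F$ whose standard outer normal $u=u_F$ satisfies $\langle u,\eta\rangle\ge 0$ for the vertex sum $\eta:=\sum_{v\in\Vert P}v$; such a facet exists because $\eta$ lies in the cone over some facet. Since $P$ is simplicial, $F$ is a $(d{-}1)$-simplex, so the level set $V_1:=\{v\in\Vert P:\langle u,v\rangle=1\}$ equals $\Vert F$ and has exactly $d$ elements, while $\langle u,v\rangle\le 1$ for every vertex and $V_i:=\{v:\langle u,v\rangle=i\}$ is empty for $i\ge 2$. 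The inequality $\langle u,\eta\rangle\ge0$ then reads $\sum_{i\le-1}|i|\,|V_i|\le d$, and counting all $3d-1$ vertices gives $|V_0|+\sum_{i\le-1}|V_i|=2d-1$.

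Next I would carry out a local analysis of $P$ near $F$, exploiting terminality and reflexivity in the manner of \cite{Casagrande06,Obro08,NillObro10}, in order to establish three things: that $P$ has no vertices below level $-1$; that $|V_0|\le d$; and that the way the level-$0$ and level-$(-1)$ vertices are positioned relative to $\Vert F$ exhibits $P$ either as a (proper or skew) bipyramid over a lower-dimensional \str polytope, or as a pseudo-symmetric polytope. Once $V_{\le-2}=\emptyset$ and $|V_0|\le d$ are known, the two displayed relations (together with $|V_{-1}|\le d$, which is then automatic) already force $\{|V_0|,|V_{-1}|\}=\{d-1,d\}$, and the two subcases $|V_{-1}|=d-1$ and $|V_{-1}|=d$ should correspond to the bipyramid case and the pseudo-symmetric case respectively. \textbf{This local analysis is the step I expect to be the main obstacle:} ruling out level-$({\le}-2)$ vertices, bounding $|V_0|$, and — the delicate point — extracting a bipyramidal apex or an antipodal facet from the level-$0$/level-$(-1)$ pattern is precisely the careful bookkeeping around a special facet developed by \Obro, and it is the part that does not reduce to the two black-box inputs used below.

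In the bipyramid case, write $P=\conv(\{w,w'\}\cup Q)$ with $Q=P\cap H$ for a coordinate hyperplane $H$ and $[w,w']$ meeting the relative interior of $Q$. By \autoref{lem:Fano=Bipyramid_over_Fano} or \autoref{lem:skew-bipyramid}, $Q$ is \str of dimension $d-1$, and it has $3d-3=3(d-1)$ vertices, so Casagrande's theorem forces $d-1$ to be even and $Q$ to be lattice equivalent to $P_6^{\oplus(d-1)/2}\cong\HSBC2^{\oplus(d-1)/2}$; in particular $d$ is odd. Using that the lattice automorphism group of $P_6^{\oplus(d-1)/2}$ acts transitively on its vertices, I would check that up to lattice equivalence there is exactly one proper bipyramid and exactly one skew bipyramid over this $Q$ which is terminal and reflexive, and that these are the polytopes \eqref{eq:obro:odd-not-skew} and \eqref{eq:obro:odd-skew}.

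In the pseudo-symmetric case, Nill's \autoref{thm:pseudo_symmetric} writes $P$, up to lattice equivalence, as a direct sum of a centrally symmetric reflexive cross polytope — which is $[-1,1]^{\oplus m}$ for some $m\ge0$ — together with $a$ del Pezzo polytopes $\HSBC{2k_1},\dots,\HSBC{2k_a}$ and $b$ pseudo del Pezzo polytopes of dimensions $2\ell_1,\dots,2\ell_b$, so that $d=m+2\sum_i k_i+2\sum_j\ell_j$. An interval contributes $2\cdot(\dim)$ vertices to such a sum, a del Pezzo $\HSBC{2k}$ contributes $2\cdot(\dim)+2$, and a pseudo del Pezzo contributes $2\cdot(\dim)+1$, so $P$ has $2d+2a+b$ vertices. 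Hence $3d-1=2d+2a+b$, i.e.\ $d=2a+b+1$, whereas $d\ge m+2a+2b$ forces $m+b\le1$. I would then dispose of the three remaining possibilities: $(m,b)=(0,0)$ is impossible, since it makes $d=2\sum k_i$ even while $d=2a+1$ is odd; $(m,b)=(1,0)$ forces $d$ odd and every $k_i=1$, so $P\cong[-1,1]\oplus P_6^{\oplus a}$, which is \eqref{eq:obro:odd-not-skew}; and $(m,b)=(0,1)$ forces $d$ even and every $k_i=1$ and $\ell_1=1$, so $P\cong P_5\oplus P_6^{\oplus(d/2-1)}$, which is \eqref{eq:obro:even}. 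Since the proper bipyramid appearing in the bipyramid case is itself \eqref{eq:obro:odd-not-skew}, the two cases together cover every possibility, completing the classification.
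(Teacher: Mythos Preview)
The paper does not prove this theorem --- it is quoted from \Obro~\cite{Obro08} as a known input --- so your proposal must stand on its own. Your downstream arguments are fine: once $P$ is a (skew) bipyramid, Casagrande's bound identifies the base and forces $d$ odd; once $P$ is pseudo-symmetric, Nill's \autoref{thm:pseudo_symmetric} plus your vertex count gives exactly \eqref{eq:obro:even} or \eqref{eq:obro:odd-not-skew}. The gap is the step you yourself flag: you never establish the dichotomy \emph{bipyramid or pseudo-symmetric}, and the proposed route via $|V_{-1}|\in\{d-1,d\}$ does not work. Take $d$ even and $P=P_5\oplus P_6^{\oplus(d/2-1)}$, the polytope \eqref{eq:obro:even}. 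Its vertex sum is $e_1$, so every special facet has $\langle u_F,v_P\rangle=1$ and thus $|V_{-1}|=d-1$. By your correspondence $P$ should then be a bipyramid, hence $d$ odd; but $d$ is even and $P$ is not a bipyramid (for $d=4$, no vertex of $P_5\oplus P_6$ lies on half of its $30$ facets, as a bipyramid apex must). Your counting also fails to exclude $\eta^F=(d,d,d-2,1)$, a single vertex at level $-2$ when the eccentricity is~$0$.

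The ``local analysis'' you defer \emph{is} \Obro's proof: he shows directly (via what appear in this paper as \autoref{lem:d_vertices_at_0} and \autoref{lem:vertices_at_minus1}) that when $\eta_0=d$ the level-$0$ and level-$(-1)$ vertices take the shapes $\phi(e_i)-e_i$ and $-e_i$, and then classifies the combinatorics of $\phi$ by hand; the case $\eta_0=d-1$ needs a separate argument. Your bipyramid/pseudo-symmetric dichotomy happens to be true \emph{a posteriori}, but proving it in advance is not visibly easier than this direct route.
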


It turns out that all these polytopes are smooth Fano. Note that there even exists a generalization without the
terminality assumption; see Nill and \Obro~\cite{NillObro10}. From this classification it follows that
combinatorially only one case per dimension occurs.  This is a consequence of the fact that a proper bipyramid is
combinatorially equivalent to a skew bipyramid.

Our main result is the following classification.  A \emph{double bipyramid} is a bipyramid over a
bipyramid, and each of these bipyramids can be proper or skew.

\begin{theorem}\label{thm:main}
  Any $d$-dimensional \str polytope $P$ with exactly $3d-2$ vertices
  is lattice equivalent to one of the following.  If $d$ is even then $P$ is lattice equivalent to
  \begin{enumerate}
    \item\label{it:main:even-bipyramid} a double proper or skew bipyramid over $P_6^{\oplus \frac{d-2}{2}}$ or
    \item\label{it:main:even-p5} $P_5^{\oplus 2} \oplus P_6^{\oplus (\frac{d}{2} - 2)}$ or
    \item\label{it:main:even-hsbc4} $\HSBC{4} \oplus P_6^{\oplus (\frac{d}{2} - 2)}$.
  \end{enumerate}
  If $d$ is odd then $P$ is lattice equivalent to
  \begin{enumerate}\setcounter{enumi}{3}
    \item\label{it:main:odd-bipyramid} a proper or skew bipyramid over $P_5 \oplus P_6^{\oplus \frac{d-3}{2}}$.
  \end{enumerate}
  In particular, if $d$ is even there are three combinatorial types, and the combinatorial type is
  unique if $d$ is odd.  Moreover, $P$ is a smooth Fano polytope in all cases.
\end{theorem}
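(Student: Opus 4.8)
The plan is to bootstrap from the classifications already available for $3d$ and $3d-1$ vertices, rather than run a fully self-contained census. For a \str polytope $P$ set $\operatorname{def}(P)=3\dim P-\#\Vert{P}$, the \emph{deficit}: Casagrande's theorem~\cite{Casagrande06} quoted above says $\operatorname{def}(P)\ge 0$ with equality exactly for direct sums of copies of $P_6$, and \autoref{thm:3d-1} is the case $\operatorname{def}(P)=1$. The deficit is additive under direct sums, and forming a proper or skew bipyramid over a polytope raises it by exactly $1$; so, granting \autoref{lemma:Fano+Fano=Fano}, \autoref{lem:Fano=Bipyramid_over_Fano}, and \autoref{lem:skew-bipyramid}, the whole theorem reduces to the single structural claim: \emph{every \str polytope $P$ with $\operatorname{def}(P)=2$ is a nontrivial direct sum, or a proper or skew bipyramid, or lattice equivalent to $\HSBC 4$} (the last only in dimension $4$). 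Indeed, once this is known one argues by induction on $d$: if $P$ is a bipyramid, its base is a \str polytope of deficit $1$, hence by \autoref{thm:3d-1} equals $P_5\oplus P_6^{\oplus k}$ when $d$ is odd --- giving~\ref{it:main:odd-bipyramid} --- or a bipyramid over $P_6^{\oplus k}$ when $d$ is even --- giving~\ref{it:main:even-bipyramid}; if $P=P_1\oplus P_2$ splits nontrivially then $\operatorname{def}(P_1)+\operatorname{def}(P_2)=2$ and, feeding each summand into Casagrande's theorem ($\operatorname{def}=0$), \autoref{thm:3d-1} ($\operatorname{def}=1$), or the induction hypothesis ($\operatorname{def}=2$, strictly smaller dimension), and reassembling with the identity ``(skew bipyramid over $Q$)$\,\oplus R=$ skew bipyramid over $(Q\oplus R)$'', one recovers~\ref{it:main:even-p5}, \ref{it:main:even-hsbc4} and again~\ref{it:main:even-bipyramid}, \ref{it:main:odd-bipyramid}; and if $P\cong\HSBC 4$ one lands in~\ref{it:main:even-hsbc4} with $k=0$. (Whenever $P$ turns out to be pseudo-symmetric one may shortcut part of this using \autoref{thm:pseudo_symmetric}.)

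The real content is the structural claim, and for it I would adapt the special-facet method of \Obro~\cite{Obro08} and Nill and \Obro~\cite{NillObro10}, which has to be pushed further since a greater variety of subcases occurs at level $3d-2$. Fix a \emph{special} facet $F=\conv\{x_1,\dots,x_d\}$ --- one whose spanned cone contains the vertex sum $\sum_{v\in\Vert{P}}v$ --- with standard outer facet normal $u_F$. Reflexivity makes every vertex sit at an integral \emph{level} $\langle u_F,\cdot\rangle\le 1$, attained with value $1$ exactly on the $d$ vertices of the simplex $F$; hence there are precisely $2d-2$ vertices at level $\le 0$, ``only two short'' of the maximum $2d$ permitted by Casagrande's bound. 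Each of the $d$ facets meeting $F$ in a ridge contributes one more vertex $y_i$, and reflexivity tightly constrains the position of $y_i$ relative to $F$, so that --- together with terminality --- only a very short list of admissible types remains for each $y_i$. Organizing the $2d-2$ low vertices by how they involve the directions $x_1,\dots,x_d$ then produces a partition of $\{1,\dots,d\}$ into blocks, each carrying a recognizable sub-configuration: a hexagonal sextuple $\{\pm x_i,\pm x_j,\pm(x_i-x_j)\}$, a pentagonal quintuple, an antipodal or skew apex pair, or --- only when $d=4$ --- the del Pezzo pattern $\{\pm x_1,\dots,\pm x_4,\pm\1\}$. A hexagonal block is deficit-free, a pentagonal block or an apex pair costs $1$, and the del Pezzo block costs $2$; since the total deficit is $2$, at most two non-hexagonal blocks occur, and then a pentagonal block splits off a $P_5$ summand, an apex pair exhibits $P$ as a bipyramid, a del Pezzo block makes $P$ equal to $\HSBC 4$ (if $d=4$) or $\HSBC 4\oplus P_6^{\oplus k}$ (if $d>4$), and the remaining hexagonal blocks collect into a $P_6^{\oplus k}$ summand by \autoref{lemma:Fano+Fano=Fano} --- which is exactly the structural claim.

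The step I expect to be the genuine obstacle is proving that this block decomposition is exhaustive. One must rule out every other conceivable type of neighbouring vertex $y_i$, and every way in which two blocks might interfere by sharing an index, by showing that each such possibility violates simpliciality, terminality, or reflexivity; and the skew-bipyramid configurations and the single $\HSBC 4$ configuration demand separate, delicate treatment, since these are precisely the places where the $3d-1$ analysis acquires genuinely new cases once one vertex is removed. This organizational bookkeeping, rather than any single clever device, is where the bulk of the labour goes --- the ``more organization'' promised in the introduction.

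Finally, the assertion that $P$ is smooth Fano costs nothing once its structure is known. The building blocks $P_6$, $P_5$, and $\HSBC 4$ are smooth Fano; a direct sum of smooth Fano polytopes is smooth Fano, because a facet of $P_1\oplus P_2$ is the join of a facet of $P_1$ with a facet of $P_2$ and the union of lattice bases of the two complementary coordinate sublattices is again a lattice basis; and a proper or skew bipyramid over a smooth Fano polytope $Q$ is smooth Fano, because every facet is a pyramid over a facet of $Q$ and adjoining its apex --- which is $\pm e_1$ in the proper case and $v-e_1$ with $v\in\Vert{Q}$ in the skew case --- to a lattice basis of the linear span of $Q$ keeps the determinant equal to $\pm 1$. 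Combined with \autoref{lem:Fano=Bipyramid_over_Fano} and \autoref{lem:skew-bipyramid} for simpliciality, terminality, and reflexivity, this establishes the last sentence of \autoref{thm:main}.
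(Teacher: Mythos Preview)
Your high-level reduction --- defining the deficit, noting its additivity under sums and bipyramids, and thereby reducing \autoref{thm:main} to a structural claim (``deficit $2$ implies bipyramid, nontrivial direct sum, or $\HSBC 4$'') which is then fed back into \autoref{thm:3d-1} and Casagrande's theorem --- is correct and matches the paper's overall architecture. The smoothness paragraph at the end is also fine.

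Where you diverge from the paper is in the proposed attack on the structural claim itself. You suggest that the $2d-2$ low vertices organize themselves into a partition of $\{1,\dots,d\}$ by recognizable ``blocks'' (hexagonal, pentagonal, apex, del Pezzo), each carrying a fixed deficit contribution. The paper does \emph{not} proceed this way. Instead it first stratifies by the \emph{eccentricity} $\ecc(P)=\langle u_F,v_P\rangle\in\{0,1,2\}$ of the vertex sum, then within each stratum enumerates the finitely many admissible $\eta$-vectors (Table~\ref{tab:eta}), and for each $\eta$-vector runs a separate argument built around the map $\phi^F$ and the classification of $V(F,0)$ and $V(F,-1)$ given in \autoref{prop:classificationOf0and-1}. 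The $\HSBC 4$ summand, in particular, does not emerge from a ``del Pezzo block'' in your sense; it appears only when $\ecc(P)=0$ and \emph{every} facet has $\eta=(d,d-2,d)$, which forces central symmetry (\autoref{prop:centrallySymmetric}) and then invokes \autoref{thm:pseudo_symmetric}.

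The genuine gap in your proposal is that the block picture, as stated, is not well-defined: the paper's case analysis repeatedly encounters configurations that do not partition cleanly --- for instance a single level-$0$ vertex $-e_1-e_2+e_a+e_b$ opposite to two basis vectors simultaneously (case \eqref{prop:classificationOf0and-1:2} of \autoref{prop:classificationOf0and-1}), or a level-$(-1)$ vertex of shape $-2e_1-e_r+e_s+e_t$ (case \eqref{prop:classificationOf0and-1:3}), or a level-$(-2)$ vertex $-e_k-e_\ell$ interacting with several indices at once (Lemmas \ref{lem:dd-1d-21a}--\ref{lem:dd-1d-21-oppin0}). Each of these entangles indices across what you would want to call separate blocks, and excluding them is precisely the ``organizational bookkeeping'' you flag as the obstacle. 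The paper's eccentricity/$\eta$-vector scaffolding is what makes that bookkeeping tractable: it bounds a priori how far below level $-1$ any vertex can sit, and it tells you in advance exactly how many vertices live at each level, so that the residual freedom is small enough to exhaust by hand. Your block decomposition would, in the end, have to reproduce an equivalent finite case split; as written it asserts the outcome of that split rather than deriving it.
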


\begin{remark}\label{rem:main}
  The cases above split into several lattice isomorphism classes, which we want to enumerate
  explicitly. For this, observe that up to lattice equivalence any bipyramid over a \str polytope
  $Q\subseteq \RR^{d-1}\times\{0\}$ can be realized as $\conv(Q\cup \{e_d, -e_d+v\})$, where $v$ is
  a vertex of $Q$ or zero. Hence, up to lattice equivalence, each proper or skew bipyramid can be
  described by specifying $v$.  The case $v=0$ is the proper bipyramid. A double proper or skew
  bipyramid is then given by a pair of points.

  Suppose first that $d$ is even. Any even-dimensional \str polytope with $3d-2$ vertices is lattice
  equivalent to $Q\oplus P_6^{\oplus k}$ for a \str polytope $Q$ of (even) dimension at most $6$,
  and $k=\frac{d-\dim Q}{2}$. Hence, it suffices to classify up to dimension $6$. This will be
  explained in the following paragraph.

  The two planar variants are shown in Figures~\ref{fig:P4a} and~\ref{fig:P4b}. For $d=4$ the
  case~\eqref{it:main:even-bipyramid} splits into eight subtypes, depending on the choice of the two
  apices of the bipyramid. Let $v_1, v_2, \ldots, v_6$ be the vertices of $P_6$ in cyclic order, and
  $x,x'$ the apices of the first bipyramid. Then we have the following choices:
  \begin{align*}
    (0,0)\,,\quad (0,x)\,,\quad (0,v_1)\,,\quad (v_1,v_1)\,,\quad (v_1, v_2)\,,\quad
    (v_1,v_3)\,,\quad (v_1,v_4)\;,\quad \text{and}\quad (v_1,x)\,.
  \end{align*}
  The reason is that the group of lattice automorphisms of $P_6$, which is isomorphic to the dihedral
  group of order $12$, acts sharply transitively on adjacent pairs of vertices.  The two
  types~\eqref{it:main:even-p5} and~\eqref{it:main:even-hsbc4} don't split further into lattice
  isomorphism types. In dimension $d=6$ we have one additional choice for the apices of a double
  skew bipyramid: We can choose the vertices in distinct hexagons.  Summarizing, in even dimensions
  there are eleven lattice isomorphism types in dimensions $d\ge 6$, ten for $d=4$ and two for
  $d=2$.

  Now let us look at the odd dimensional cases.  For $d=1$ we have the segment $[-1,1]$, and for
  $d=3$ there are the proper and skew bipyramid over $P_5$. Let $w_1, w_2, \ldots, w_5$ be the
  vertices of $P_5$ in cyclic order, and $w_1$ is the unique vertex such that $-w_1$ is not a vertex
  of $P_5$. For bipyramids in dimensions $d\ge 5$ we can choose $v$ as $0, w_1, w_2, w_3, v_1$,
  which gives us all five possible isomorphism types.  Summarizing, the odd dimensional types split
  into one type for $d=1$, two for $d=3$ and five in all higher dimensions.
\end{remark}

The possible bases of the bipyramid are (up to lattice equivalence) all precisely the
$d{-}1$-dimensional, \str $d$-polytopes with $3d-4=3(d-1)-1$ vertices from \autoref{thm:3d-1}; all
of these are smooth Fano.  We do believe that the list of the classifications obtained so far
follows a pattern.

\begin{conjecture}\label{conjecture}
  Let $P$ be a $d$-dimensional smooth Fano polytope with $n$ vertices such that $n\ge 3d-k$ for
  $k\le\tfrac{d}{3}$.  If $d{+}k$ is even then $P$ is lattice equivalent to $Q\oplus P_6^{\oplus
    (\frac{d-3k}{2})}$ where $Q$ is a $3k$-dimensional smooth Fano polytope with $n-3d+9k\ge
  8k$ vertices.  If $d{+}k$ is odd then $P$ is lattice equivalent to $Q\oplus P_6^{\oplus
    (\frac{d-3k-1}{2})}$ where $Q$ is a $(3k{+}1)$-dimensional smooth Fano polytope with
  $n-3d+9k-3\ge 8k-3$ vertices.
\end{conjecture}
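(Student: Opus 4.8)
The plan is to prove the conjecture by induction on the dimension $d$, handling all admissible deficits $k$ at once and using the classifications for small deficit---Casagrande's theorem ($k=0$), \autoref{thm:3d-1} ($k=1$), and \autoref{thm:main} ($k=2$)---as anchoring cases. The entire statement reduces to a single \emph{splitting lemma}: \textit{if $P$ is a smooth Fano $d$-polytope with $n\ge 3d-k$ vertices and $d\ge 3k+2$, then $P$ is lattice equivalent to $P_6\oplus P'$ for some smooth Fano $(d{-}2)$-polytope $P'$.} Granting this, note that the deficit $3d-n$ is additive under direct sums and vanishes for $P_6$, so $P'$ has deficit $k$ as well; since simpliciality, terminality and reflexivity descend to direct summands by \autoref{lemma:Fano+Fano=Fano}, and the lattice-basis condition does so by the same elementary argument, $P'$ is again a smooth Fano polytope satisfying the induction hypothesis. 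Peeling off hexagons one at a time lowers $d$ by $2$ at each step and therefore preserves the parity of $d$; the process halts exactly when the dimension first drops below $3k+2$, that is, at $\dim Q=3k$ when $d{+}k$ is even and at $\dim Q=3k+1$ when $d{+}k$ is odd. The residual polytope $Q$ is smooth Fano by the same heredity, and the asserted vertex count follows from the bookkeeping $n=v_Q+6m$ with $m=(d-\dim Q)/2$.

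To prove the splitting lemma I would run the special-facet machinery of \Obro and Nill--\Obro. Choose a facet $F$ whose spanning cone contains $\sum_{v\in\Vert P}v$; by smoothness $\Vert F$ is a lattice basis, so after a transformation in $\GL{d}{\ZZ}$ we may assume $\Vert F=\{e_1,\dots,e_d\}$ with common outer normal $u=\1$. Reflexivity gives $\langle\1,v\rangle\le 1$ for every vertex, so each vertex lies at an integral \emph{level} $\langle\1,v\rangle\in\{1,0,-1,-2,\dots\}$, the level-$1$ vertices being precisely those of $F$, while the special choice of $F$ forces $\sum_v\langle\1,v\rangle\ge 0$. Terminality and simpliciality then restrict the coordinate patterns available to the level-$0$ and level-$(-1)$ vertices to the same short list of shapes ($e_i-e_j$, $-e_i$, and a few longer patterns) that governs the $3d{-}1$ and $3d{-}2$ analyses.

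The counting step is where the hypothesis $k\le\tfrac{d}{3}$ is spent: with $n\ge 3d-k\ge\tfrac{8}{3}d$ vertices spread over the levels, the inequality $\sum_v\langle\1,v\rangle\ge 0$ together with the level bounds forces all but $O(k)$ of the vertices into antipodal triples that realize a copy of $P_6$ inside a pair of coordinate directions. I would record this in a graph $G$ on $\{1,\dots,d\}$ whose edges mark the directions $\pm(e_i-e_j)$ (with their companion patterns) actually occurring among the vertices, and then show that all but $O(k)$ of the indices lie on pairwise disjoint \emph{hexagon edges} $\{i,j\}$, each carrying exactly the six vertices $\pm e_i,\pm e_j,\pm(e_i-e_j)$. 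For such an edge the coordinate plane $\lin\{e_i,e_j\}$ is a lattice direct summand meeting $P$ in $P_6$, so \autoref{lemma:Fano+Fano=Fano} splits it off; the indices outside the hexagon edges assemble into the core $Q$.

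The hard part is the uniformity of this pairing argument in $k$. For $k\le 2$ the core has dimension at most $3k+1\le 7$, and one can afford an explicit finite analysis of the surviving non-hexagon configurations---this is exactly what turns \autoref{thm:main} into a finite list. For arbitrary $k$ the number of admissible level-$0$ and level-$(-1)$ patterns, and the ways in which they can interlock across coordinate directions, has no evident a priori bound, so a case distinction is hopeless; what is required instead is a structural, parameter-free statement---in the spirit of a connectivity or excluded-configuration result for $G$---guaranteeing that, once the vertex density exceeds $\tfrac{8}{3}d$, the non-hexagon part of $G$ has rank $O(k)$ independently of $d$, and that the core stabilizes at dimension exactly $3k$ or $3k+1$ rather than leaking further dimensions. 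Establishing this uniform bound is the crux separating the conjecture from the settled small-deficit cases; once the hexagons are removed, \autoref{thm:pseudo_symmetric} is the natural tool for understanding the centrally symmetric part of the core.
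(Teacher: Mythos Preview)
The statement you are trying to prove is labelled a \emph{conjecture} in the paper, and the paper offers no proof of it; indeed the authors explicitly note that if the conjecture holds then the classification of smooth Fano polytopes up to dimension nine would yield all polytopes with at least $3d-3$ vertices, which they present as a desirable future consequence rather than an achieved result. So there is no ``paper's own proof'' against which to compare your proposal.

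As for the proposal itself: it is not a proof but a programme, and you say so yourself. The reduction to a splitting lemma is correct and well motivated, and the bookkeeping about deficits, parities, and the heredity of smooth Fano under direct summands is fine. But the entire content of the conjecture is precisely the splitting lemma, and for that you offer only a sketch that terminates in an explicit admission: ``Establishing this uniform bound is the crux separating the conjecture from the settled small-deficit cases.'' The graph $G$ you introduce and the hoped-for structural statement---that the non-hexagon part has rank $O(k)$ independently of $d$ once the vertex density exceeds $\tfrac83 d$---is exactly the open problem. Nothing in your text provides an argument for it; the references to \autoref{thm:pseudo_symmetric} and to the level-pattern analysis are gestures toward tools, not steps in a proof. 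In particular, the paper's own methods for $k\le 2$ proceed by an exhaustive case analysis whose length already grows substantially from $k=1$ to $k=2$, and there is no indication in either the paper or your outline of a mechanism that would collapse this growth for general $k$. Your proposal should therefore be read as a reformulation of the conjecture (reduce to the splitting lemma; phrase it combinatorially via the auxiliary graph) rather than as a proof.
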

This conjecture is the best possible in the following sense: The $k$-fold direct sum of skew bipyramids
over $P_6$ yields a smooth Fano polytope of dimension $d=3k$ with $8k=3d-k$ vertices which doesn't
admit to split off a single copy of $P_6$ as a direct summand.  However, it does contain
$P_6^{\oplus k}$ as a subpolytope of dimension $2k=\tfrac{2}{3}d$.

If the conjecture above holds the full classification of the smooth Fano polytopes of dimension at
most nine \cite{smoothreflexive} would automatically yield a complete description of all
$d$-dimensional smooth Fano polytopes with at least $3d-3$ vertices.

\section{Toric Varieties}

Regarding a lattice point $a\in\ZZ^d$ as the exponent vector of the monomial
$z^a=z_1^{a_1}z_2^{a_2}\dots z_d^{a_d}$ in the Laurent polynomial ring $\CC[z_1^{\pm 1},z_2^{\pm
  1},\dots,z_d^{\pm 1}]$ provides an isomorphism from the additive group of $\ZZ^d$ to the
multiplicative group of Laurent monomials.  This way the maximal spectrum $X_\sigma$ of a lattice
cone $\sigma$ becomes an \emph{affine toric variety}.  If $\Sigma$ is a fan of lattice cones, gluing
the duals of the cones along common faces yields a \emph{(projective) toric variety} $X_\Sigma$.
This complex algebraic variety admits a natural action of the embedded dense torus corresponding to
(the dual of) the trivial cone $\{\0\}$ which is contained in each cone of $\Sigma$.  If $P\in\RR^d$
is a lattice polytope containing the origin, then the \emph{face fan}
\[
\Sigma(P) \ = \ \SetOf{\pos(F)}{F \text{ face of } P}
\]
is such a fan of lattice cones.  We denote the associated toric variety by $X_P=X_{\Sigma(P)}$.  The
face fan of a polytope is isomorphic to the normal fan of its polar.  Two lattice polytopes $P$ and
$Q$ are lattice equivalent if and only if $X_P$ and $X_Q$ are isomorphic as toric varieties.

Let $P$ be a full-dimensional lattice polytope containing the origin as an interior point.  Then the
toric variety $X_P$ is smooth if and only if $P$ is smooth in the sense of the definition given
above, that is, the vertices of each facet of $P$ are required to form a lattice basis.  A smooth
compact projective toric variety $X_P$ is a \emph{toric Fano variety} if its anticanonical divisor
is very ample.  This holds if and only if $P$ is a smooth Fano polytope; see Ewald
\cite[\S VII.8.5]{Ewald96}.

We now describe the toric varieties arising from the polytopes listed in our \autoref{thm:main}. For the list of
two-dimensional toric Fano varieties we use the same notation as in Figure~\ref{fig:2dmin_Fano}; see Ewald
\cite[\S VII.8.7]{Ewald96}.  The toric variety $X_{P_3}$ is the complex projective plane $\PP_2$. The toric
variety $X_{P_{4a}}$ is isomorphic to a direct product $\PP_1\times\PP_1$ of lines, and $X_{P_{4b}}$ is the smooth
\emph{Hirzebruch surface} $\cH_1$.  The toric variety $X_{P_5}$ is a blow-up of $\PP_2$ at two points or,
equivalently, a blow-up of $\PP_1\times\PP_1$ at one torus invariant point.  The toric varieties associated with
the del Pezzo polytopes $\HSBC{d}$ are called \emph{del Pezzo varieties}; see Casagrande~\cite[\S3]{Casagrande03}
for a detailed description. As a special case the toric variety $X_{P_6}$ is a del Pezzo surface or, equivalently,
a blow-up of $\PP_2$ at three non-collinear torus invariant points. Notice that in algebraic geometry \emph{del
Pezzo varieties} is an ambiguous term.

Two polytope constructions play a role in our classification, direct sums and (skew) bipyramids.  We
want to translate them into the language of toric varieties.  Let $P\subset\RR^d$ and
$Q\subset\RR^e$ both be full-dimensional lattice polytopes containing the origin.  Then the toric
variety $X_{P\oplus Q}$ is isomorphic to the direct product $X_P\times X_Q$.  In particular, for
$P=[-1,1]$ we have that the toric variety
\[
X_{[-1,1]\oplus Q} \ = \ \PP_1 \times X_Q
\]
over the regular bipyramid over $Q$ is a direct product with the projective line $\PP_1\cong
X_{[-1,1]}$.  More generally, the toric variety of a skew bipyramid over $Q$ is a toric fiber bundle
with base space $\PP_1$ and generic fiber $X_Q$; see Ewald \cite[\S VI.6.7]{Ewald96}. An example is the
smooth Hirzebruch surface $\cH_1\cong X_{P_{4b}}$, which is a (projective) line bundle over~$\PP_1$.

In order to translate \autoref{thm:main} to toric varieties we need a few more definitions.  For the
sake of brevity we explain these in polytopal terms and refer to Ewald's monograph \cite{Ewald96}
for the details.  A toric variety $X_P$ associated with a canonical lattice $d$-polytope $P$ is
\emph{$\QQ$-factorial} (or \emph{quasi-smooth}) if $P$ is simplicial; see \cite[\S VI.3.9]{Ewald96}.
In this case the \emph{Picard number} equals $n-d$ where $n$ is the number of vertices of $P$; see
\cite[\S VII.2.17]{Ewald96}.  We call a toric variety, $X$, a \emph{$2$-stage fiber bundle} with a
pair $(Y,Z)$ of base spaces if $X$ is a fiber bundle with base space $Z$ such that the generic fiber
itself is a fiber bundle with base space~$Y$.  We say that $Y$ is the base space of the \emph{first
  stage} while $Z$ is the base space of the \emph{second stage}.  That is, in order to construct $X$
one starts out with the generic fiber of the first stage, then forms a fiber product with the first
base space~$Y$, afterwards takes the resulting space as the new generic fiber to finally form $X$ as
a fiber product with the second base space $Z$.

\begin{corollary}\label{cor:toric}
  Let $X$ be a $d$-dimensional terminal $\QQ$-factorial Gorenstein toric Fano variety with Picard
  number $2d-2$. We assume $d\ge 4$.

  If $d$ is even, then $X$ is isomorphic to
  \begin{enumerate}
    \item a $2$-stage toric fiber bundle such that the base spaces of both stages are projective lines
      and the generic fiber of the first stage is the direct product of $\frac{d-2}{2}$ copies of the
      del Pezzo surface $X_{P_6}$, or
    \item the direct product of two copies of $X_{P_5}$ and $\frac{d}{2}-2$ copies of $X_{P_6}$ or
    \item the direct product of the del Pezzo fourfold $X_{\HSBC{4}}$ and $\frac{d}{2}-2$ copies of
      $X_{P_6}$.
  \end{enumerate}
  If $d$ is odd then $X$ is isomorphic to
  \begin{enumerate}\setcounter{enumi}{3}
    \item a toric fiber bundle over a projective line with generic fiber isomorphic to the direct
      product of $X_{P_5}$ and $\frac{d-3}{2}$ copies of $X_{P_6}$.
  \end{enumerate}
\end{corollary}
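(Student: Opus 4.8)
The plan is to read off \autoref{cor:toric} from \autoref{thm:main} through the polytope--variety dictionary assembled in this section. First I would note that any $d$-dimensional terminal $\QQ$-factorial Gorenstein toric Fano variety $X$ is isomorphic to $X_P$ for a \str lattice $d$-polytope $P$, unique up to lattice equivalence: $\QQ$-factoriality forces $P$ to be simplicial, the Gorenstein Fano property forces $P$ to be reflexive, and terminality of $X$ forces $P$ to be terminal. Since the Picard number of $X_P$ equals $n-d$ with $n=|\Vert{P}|$, the hypothesis on the Picard number translates into $n=3d-2$. Hence $P$ appears in the list of \autoref{thm:main}; in particular $P$ is smooth Fano, so $X=X_P$ is a smooth toric Fano variety.

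It then remains to translate each of the four cases of \autoref{thm:main} using the two rules recalled above, namely $X_{P\oplus Q}\cong X_P\times X_Q$ and the fact that the toric variety of a (proper or skew) bipyramid over $Q$ is a toric fiber bundle over $\PP_1$ with generic fiber $X_Q$ (the proper bipyramid giving the trivial bundle $\PP_1\times X_Q$), together with the fact that lattice equivalence of polytopes corresponds to isomorphism of toric varieties. For case~\eqref{it:main:even-p5} the direct-sum rule yields $X_{P_5}\times X_{P_5}\times X_{P_6}^{\times(\frac{d}{2}-2)}$, and for case~\eqref{it:main:even-hsbc4} it yields $X_{\HSBC{4}}\times X_{P_6}^{\times(\frac{d}{2}-2)}$; these are the conclusions~(ii) and~(iii). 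For the odd case~\eqref{it:main:odd-bipyramid}, I would write $P$ as a bipyramid over $P_5\oplus P_6^{\oplus\frac{d-3}{2}}$; since $X_{P_5\oplus P_6^{\oplus\frac{d-3}{2}}}\cong X_{P_5}\times X_{P_6}^{\times\frac{d-3}{2}}$, the bipyramid rule gives conclusion~(iv).

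The only case requiring a second application of the bipyramid rule is the even double-bipyramid case~\eqref{it:main:even-bipyramid}. Here $P$ is a bipyramid over $Q_1$, where $Q_1$ is itself a bipyramid over $Q_0:=P_6^{\oplus\frac{d-2}{2}}$. Applying the rule once shows that $X_{Q_1}$ is a toric fiber bundle over $\PP_1$ with generic fiber $X_{Q_0}\cong X_{P_6}^{\times\frac{d-2}{2}}$; applying it again shows that $X_P$ is a toric fiber bundle over $\PP_1$ whose generic fiber is $X_{Q_1}$. By the definition of a $2$-stage toric fiber bundle this is precisely conclusion~(i), with both base spaces equal to $\PP_1$ and with $X_{P_6}^{\times\frac{d-2}{2}}$ as the generic fiber of the first stage.

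I do not expect a genuine mathematical difficulty here; the main point to be careful about is the bookkeeping, that is, verifying that an iterated (proper or skew) bipyramid produces exactly a $2$-stage toric fiber bundle in the sense defined above and that each cited translation rule applies verbatim. The hypothesis $d\ge 4$ is used only to guarantee that the exponents $\frac{d-2}{2}$, $\frac{d}{2}-2$, and $\frac{d-3}{2}$ occurring in the conclusions are non-negative, so that the listed fiber products are well defined.
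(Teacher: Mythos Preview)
Your proposal is correct and matches the paper's approach exactly: the paper does not give an explicit proof of this corollary, treating it as an immediate translation of \autoref{thm:main} via the dictionary set up just before (direct sums $\leftrightarrow$ direct products, bipyramids $\leftrightarrow$ toric fiber bundles over $\PP_1$, and the definition of a $2$-stage fiber bundle). Your write-up simply spells out this translation in detail, including the identification of the Picard-number condition with $n=3d-2$, and there is nothing to add.
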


All fiber bundles in the preceding result may or may not be trivial.

\begin{remark}
  As for the polytopes in~\autoref{thm:main} we can refine the above cases into equivalence classes
  up to toric isomorphisms. By~\autoref{rem:main} there is one type for $d=1$, two types for $d=2,
  3$, ten for $d=4$, five for any odd dimension $d\ge 5$ and eleven for even dimensions $d\ge 6$. In
  dimensions up to $4$ this has been classified previously~\cite{Batyrev2007,Batyrev91}.
\end{remark}

\section{Terminal, Simplicial, and Reflexive Polytopes}

\subsection{Special Facets and \texorpdfstring{$\eta$}{eta}-Vectors}
\phantomsection\label{subsec:eta}

Let $P\subset\RR^d$ be a reflexive lattice $d$-polytope with vertex set $\Vert{P}$.  In particular,
the origin $\0$ is an interior point.  We let
\[
v_P \ := \ \sum_{v\in \Vert P} v
\]
be the \emph{vertex sum} of $P$.  As $P$ is a lattice polytope, the vertex sum is a lattice point.
Now, a facet $F$ of $P$ is called \emph{special} if the cone $\pos F$ in the fan contains the vertex
sum $v_P$.  Since the fan $\Sigma(P)$ generated by the facet cones is complete, a special facet
always exists.  However, it is not necessarily unique.  For instance, if $P$ is centrally symmetric,
we have $v_P=\0$, and each facet is special.

Since $P$ is reflexive, for each facet $F$ of $P$ there is a unique (outer) facet normal
vector $u_F$ in the dual lattice $M\cong\ZZ^d$ and $\alpha\in \ZZ$ which satisfies the following
\begin{enumerate}
\item $u_F$ is primitive, that is, there is no lattice point strictly between $0$ and $u_F$,
\item the affine hyperplane spanned by $F$ is the set $\smallSetOf{x\in\RR^d}{\langle
    u_F,x\rangle=1}$,
\item and $\langle u_F,x\rangle \le 1$ for all points $x\in P$.
\end{enumerate}
The vector $u_F$ is called the \emph{standard outer normal vector} of $F$.  We define
\begin{align*}
    H(F,k) \ &:= \ \SetOf{x\in\RR^d}{\langle u_F,x\rangle=k}\\
    V(F,k) \ &:= \ H(F,k) \cap \Vert P
\intertext{and}
\eta^F_k \ &:= \  |V(F,k)|
\intertext{for any integer $k\le 1$.  We have}
\Vert{P} \ &\phantom{:}= \bigcup_{k\le 1} V(F,k) \ \subset \ \bigcup_{k\le 1} H(F,k)
\end{align*}
and thus $\eta^F_{1}+\eta^F_{0}+\eta^F_{-1}+\cdots=|\Vert{P}|$ is the number of vertices of $P$.  If
a vertex $v$ is contained in $V(F,k)$ we call the number $k$ the \emph{level} of $v$ with respect to
$F$.  The sequence of numbers $\eta^F=(\eta^F_1,\eta^F_0,\eta^F_{-1},\dots)$ is the
\emph{$\eta$-vector} of $P$ with respect to $F$.  If the choice of the facet is obvious from the
context we omit the upper index $F$ in our notation. Notice that the following lemma does not need
terminality.

\begin{lemma}
  Let $P$ be a simplicial and reflexive polytope. The level of $v_P$ is the same for any special facet of $F$.
\end{lemma}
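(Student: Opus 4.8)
The plan is to prove, for any two special facets $F$ and $F'$ of $P$, that their levels $\langle u_F, v_P\rangle$ and $\langle u_{F'}, v_P\rangle$ are equal, by showing each is bounded above by the other.

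First I would record the two consequences of reflexivity that drive the argument. For every facet $G$ of $P$ the standard outer normal vector $u_G$ satisfies $\langle u_G, x\rangle\le 1$ for all $x\in P$, with equality exactly on $G$; in particular $\langle u_G, v\rangle\le 1$ for every vertex $v$ of $P$, and $\langle u_G, v\rangle = 1$ precisely when $v$ lies on $G$.

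Next, fix a special facet $F$. By definition $v_P\in\pos F$, so $v_P=\sum_i\lambda_i x_i$ with all $\lambda_i\ge 0$ and the $x_i$ the vertices of $F$. Pairing with $u_F$ and using $\langle u_F,x_i\rangle=1$ gives $\langle u_F,v_P\rangle=\sum_i\lambda_i$. Now pair the same expansion of $v_P$ with $u_{F'}$ for a second special facet $F'$: since each $x_i$ is a vertex of $P$ we have $\langle u_{F'},x_i\rangle\le 1$, hence $\langle u_{F'},v_P\rangle=\sum_i\lambda_i\langle u_{F'},x_i\rangle\le\sum_i\lambda_i=\langle u_F,v_P\rangle$. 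Swapping the roles of $F$ and $F'$ — which is exactly where specialness of $F'$ enters, namely to write $v_P\in\pos F'$ as a nonnegative combination of the vertices of $F'$ — yields the reverse inequality, and equality follows.

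I do not anticipate a genuine obstacle: the argument is a one-line double inequality once the reflexivity facts are in place. The only point requiring care is that specialness must be invoked symmetrically (both facets must be special so that $v_P$ lies in both cones $\pos F$ and $\pos F'$), and to observe that the whole chain only uses the facet-defining inequalities, so simpliciality is not actually needed — it is simply harmless to assume it here, matching the surrounding context.
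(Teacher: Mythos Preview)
Your proof is correct. The approach differs from the paper's, however: the paper argues that $v_P$ lies in the cone over the \emph{intersection} $F_1\cap\cdots\cap F_m$ of all special facets (using that the face fan is a fan, so the intersection of the cones $\pos F_j$ is the cone over the common face), writes $v_P=\sum_i\lambda_i v_i$ once in terms of the vertices $v_i$ of that common face, and then observes that $\langle u_{F_j},v_i\rangle=1$ for every $j$ simultaneously, giving the common value $k=\sum_i\lambda_i$ in one stroke; simpliciality is invoked to make the $\lambda_i$ unique, though this is not strictly necessary. Your pairwise double-inequality argument sidesteps the need to locate $v_P$ in the intersection cone and, as you note, does not use simpliciality at all. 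The paper's route is perhaps conceptually cleaner in that it exhibits a single witness for the common level, while yours is more elementary and shows the hypothesis of simpliciality to be superfluous for this lemma.
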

\begin{proof}
  This is obvious if $v_P=0$.  So suppose otherwise. Let $F_1, F_2, \ldots, F_m$ be the special
  facets of $P$.  By assumption, $v_P$ is
  contained in the cone $C:=\pos(F_1\cap F_2\cap \dots\cap F_m)$.  If $v_1, v_2, \ldots, v_\ell$ are
  the vertices in the common intersection of the special facets, then there are coefficients
  $\lambda_1, \lambda_2, \ldots, \lambda_\ell\ge 0$ such that
  \begin{align*}
    v_P\ =\ \sum_{i=0}^{\ell}\lambda_i v_i\,.
  \end{align*}
  Since $P$ is simplicial the coefficients $\lambda_i$ are unique. We define $k := \sum_{i=1}^{\ell}\lambda_i$.  Note that $\langle u_{F_j},
  v_i\rangle = 1$ for all $1\le j\le m$ and $1\le i\le\ell$, as $P$ is reflexive. Hence,
  \begin{align*}
    \langle u_{F_j}, v_P\rangle\ = \ \langle u_{F_j}, \sum_{i=1}^{\ell}\lambda_i v_i\rangle \ =\
    \sum_{i=1}^{\ell}\lambda_i \langle u_{F_j},  v_i\rangle \ =\ \sum_{i=1}^{\ell}\lambda_i\ =\ k\,.
  \end{align*}
  This means $v_P$ is on level $k$ for all special facets.
\end{proof}
Thus, the common level of $v_P$ for all special facets is an invariant of $P$, which we call the
\emph{eccentricity} $\ecc(P)$.

\begin{example}\label{example:skewbip-P5}
  Consider the $3$-dimensional lattice polytope $A$ (shown in \autoref{fig:skewbip-P5}) with the seven vertices
  \[
  \begin{gathered}
    e_1 \,,\ e_2 \,,\ e_3 \,;\\
    e_1-e_2 \,,\ e_2-e_1 \,,\ e_1-e_3 \,;\\
    -e_1 \, .
  \end{gathered}
  \]
  The vertex sum equals $e_1+e_2$, and the two facets
  \[
  F \ := \ \conv\{e_1,\,e_2\,,e_3\} \quad \text{and} \quad G \ := \ \conv\{e_1,\,e_2,\,e_1-e_3\}
  \]
  are special, the remaining eight facets are not.  We have $u_F=\1$ and $u_G=e_1+e_2=\1-e_3$. The
  vertices are listed by level with respect to the facet $F$.  The $\eta$-vectors
  \[
  \eta^F \ = \ (3,3,1) \ = \eta^G
  \]
  coincide.  The polytope $A$ is a skew bipyramid with apices $e_3$ and $e_1-e_3$ over the pentagon
  $P_5=\conv\{\pm e_1,e_2,\pm(e_1-e_2)\}$.  The latter is smooth Fano and so is $A$ due to
  \autoref{lem:skew-bipyramid}.  The polytope $A$ is the polytope \texttt{F.3D.0002.poly} in the file
  \texttt{fano-v3d.tgz} provided at \cite{smoothreflexive}. The polytope can also be found in the new
  \texttt{polymake} database, \texttt{polydb}, with ID \texttt{F.3D.0002}; see \url{www.polymake.org} for more
  details.
\end{example}

\begin{figure}[ht]
  \centering
		\begin{tikzpicture}[x  = {(2cm,0cm)},
		                    y  = {(1cm,.4cm)},
		                    z  = {(0cm,1.2cm)},
		                    scale = 1,
		                    color = {lightgray}]
		% style of faces
		\tikzset{facestyle/.style={fill=green!20, opacity=.8,draw=black,line join=round}}
		\tikzset{specialfacestyle/.style={fill=brown, opacity=.8,draw=black,line join=round}}
		\tikzstyle{vertex}=[circle,minimum size=3pt,inner sep=0pt, fill=black]
		\tikzstyle{every label}=[black]
		
		% Bildhoehe standardisieren
		\draw[white] (1.5,0,0) -- (-1.5,0,0);
		
		% hinten
		  \draw[specialfacestyle] (0,0,1) -- (1,0,0) -- (0,1,0) -- (0,0,1) -- cycle ;
		  \draw[facestyle] (0,0,1) -- (-1,1,0) -- (0,1,0) -- (0,0,1) -- cycle ;
		  \draw[facestyle] (0,0,1) -- (-1,1,0) -- (-1,0,0) -- (0,0,1) -- cycle ;
		
		  \draw[specialfacestyle] (1,0,-1) -- (1,0,0) -- (0,1,0) -- (1,0,-1) -- cycle ;
		  \draw[facestyle] (1,0,-1) -- (-1,1,0) -- (0,1,0) -- (1,0,-1) -- cycle ;
		  \draw[facestyle] (1,0,-1) -- (-1,1,0) -- (-1,0,0) -- (1,0,-1) -- cycle ;

		% Ecken hinten
		    \draw (0,1,0) node[label=right:\raisebox{3ex}{$\scriptstyle e_2$}] {};
		    \fill[black] (0,1,0) circle (2pt);
		    \fill[black] (-1,1,0) circle (2pt);
		
		% vorne
		  \draw[facestyle] (0,0,1) -- (1,-1,0) -- (1,0,0) -- (0,0,1) -- cycle ;
		  \draw[facestyle] (0,0,1) -- (-1,0,0) -- (1,-1,0) -- (0,0,1) -- cycle ;
		
		  \draw[facestyle] (1,0,-1) -- (-1,0,0) -- (1,-1,0) -- (1,0,-1) -- cycle ;
		  \draw[facestyle] (1,0,-1) -- (1,-1,0) -- (1,0,0) -- (1,0,-1) -- cycle ;
		
		% Ecken vorne
		  \draw (0,0,1) node[label=above:$\scriptstyle e_3$] {};
		  \draw (1,0,-1) node[label=below:$\scriptstyle e_1-e_3$] {};
		  \draw (1,0,0) node[label=right:$\scriptstyle e_1$] {};
		  \fill[black] (0,0,1) circle (2pt);
		  \fill[black] (1,0,-1) circle (2pt);
		  \fill[black] (1,0,0) circle (2pt);
		  \fill[black] (-1,0,0) circle (2pt);
		  \fill[black] (1,-1,0) circle (2pt);
		
		  \fill[black] (1,1,0) circle (2pt);
		  \draw (1,1,0) node[label=right:$\scriptstyle e_1+e_2$] {};

		\end{tikzpicture} 
  \caption{The polytope $A$ from \autoref{example:skewbip-P5}.  Its two special facets are the ones
    containing $e_1$ and $e_2$.}
  \label{fig:skewbip-P5}
\end{figure}
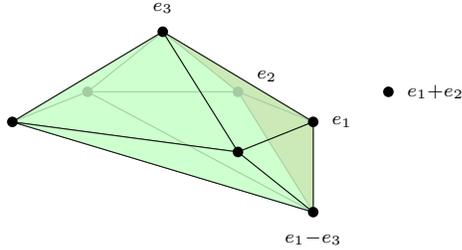

The following example shows that the $\eta$-vector does depend on the choice of the special facet.

\begin{example}\label{example:eta}
  We also consider the smooth Fano $4$-polytope $B$ which is the convex hull of the ten vertices
  \[
  \begin{gathered}
    e_1 \,,\ e_2 \,,\ e_3 \,,\ e_4 \,;\\
    e_1-e_2 \,,\ e_2-e_1 \,;\\
    -e_1 \,,\ -e_2 \,;\\
    e_1 - e_2 - e_3 \,,\ e_2-e_1-e_4 \, .
  \end{gathered}
  \]
  Here the vertex sum vanishes, and so all $24$ facets are special.  Two examples are
  \[
  F \ := \ \conv\{e_1,\, e_2,\, e_3,\, e_4\} \quad \text{and} \quad H \ := \ \conv\{e_2,\, e_3,\,
  e_4,\, e_2-e_1\} \, .
  \]
  Again the vertices are listed by level with respect to the facet $F$.  We have $u_F=\1$ and
  $u_H=-e_2-e_3-e_4=\1-e_1$.  This yields
  \[
  \eta^F \ = \ (4,2,4) \quad \text{and} \quad \eta^H \ = \ (4,3,2,1) \, .
  \]
  The unique vertex at level $-2$ with respect to $H$ is $e_1-e_2-e_3$.  This polytope is \texttt{F.4D.0066.poly} 
  in the file \texttt{fano-v4d.tgz} at \cite{smoothreflexive}. The polytope can also be found in the new 
  \texttt{polymake} database, \texttt{polydb}, with ID \texttt{F.4D.0066}; see \url{www.polymake.org} for more 
  details.
\end{example}

Throughout the paper we will return to these two examples.  Let $F$ be a facet of a simplicial
$d$-dimensional polytope with $F = \conv \{ v_1, \ldots, v_d \}$. To every $v_i \in F$ there is a
unique ridge $R=\conv (\Vert F \setminus \{ v_i \})$ and a unique facet $G \ne F$ such that $G\cap F
= R$. We will call this facet the \emph{neighboring facet} of $F$ with respect to $v_i$, and it will
be denoted by $\neigh(F,v_i)$. There also is a unique vertex $v \in \Vert P$ such that
$\neigh(F,v_i) = \conv( R \cup \{ v \})$. This vertex will be called \emph{opposite vertex} of $v_i$
with respect to $F$ and will be denoted by $\opp(F,v_i)$. To make things a little bit clearer see
\autoref{fig:opposite_vertex}.

\begin{figure}[ht]
  \centering
		\begin{tikzpicture}[scale=1.5]
		  \tikzstyle{edge} = [draw,thick,-,black]
		
		  \coordinate[label=left:$\scriptstyle v$] (v) at (0,0);
		  \coordinate (z) at (1.2,0);
		  \coordinate[label=above:$\scriptstyle w$] (w) at (0,1);
		  \coordinate[label={right:$\scriptstyle \opp(F,v)$}] (ov) at (1.5,1);
		  \coordinate[label={below:$\scriptstyle \opp(F,w)$}] (ow) at (1,-1);
		
		  \draw[edge] (v) -- (w) -- (z) -- (v) -- cycle;
		  \draw[edge] (ov) -- (w) -- (z) -- (ov) -- cycle;
		  \draw[edge] (v) -- (ow) -- (z) -- (v) --cycle;
		  
		  \foreach \point in {v,z,w,ov,ow}
		    \fill[black] (\point) circle (1.3pt);
		  
		  \draw (.3,.3) node {$\scriptstyle F$};
		  \draw (.9,.75) node {$\scriptstyle \neigh(F,v)$};
		  \draw (.7,-.25) node {$\scriptstyle \neigh(F,w)$};
		\end{tikzpicture} 
  \caption{Neighboring facets and opposite vertices}
  \label{fig:opposite_vertex}
\end{figure}
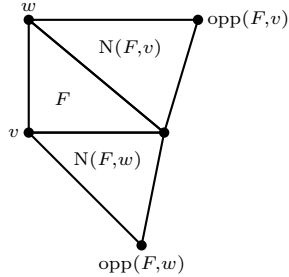

\begin{example}
  Let $B$ be the $4$-polytope from \autoref{example:eta}. The two facets $F$ and $H$ are adjacent
  with $H=\neigh(F,e_1)$ and $\opp(F,e_1)=e_2-e_1$.
\end{example}

\subsection{Coordinates and Lattice Bases}

Throughout let $P\subset\RR^d$ be a simplicial and reflexive $d$-polytope for $d\ge 2$.  Let $v$ be
a vertex of $P$ and let $F$ be a facet containing $v$.  The vertices of $F$ form a basis of $\RR^n$,
and hence there is a unique vector $u_{F,v}$, the \emph{vertex normal} of $v$ with respect to $F$,
satisfying
\[
\langle u_{F,v}, v\rangle = 1\,, \qquad \text{and} \quad \langle u_{F,v},w\rangle = 0 \text{ for all }
w\in\Vert{F}\setminus\{v\} \, .
\]
That is to say, the set $\smallSetOf{u_{F,v}}{v\in\Vert{F}}$ is the basis dual to $\Vert{F}$.  For
the reader's benefit we explicitly list a few known facts which will be useful in our proofs below.
Notice that here we do not assume that the vertices of $F$ form a \emph{lattice} basis.

\begin{lemma}[{\Obro~\cite[Lem.~1 and~2]{Obro08}}]\label{lemma:2}
  Let $F$ be a facet of $P$, $v$ a vertex of $F$, and $G:=\neigh(F,v)$. If $x\in P$ then
  \[
  \langle u_G,x \rangle \ = \ \langle u_F, x\rangle + \left( \langle u_G,v \rangle -1 \right) \langle u_{F,v} ,x \rangle
  \]
  Moreover $\langle u_{F}, x \rangle -1 \le \langle u_{F,v}, x \rangle$. In case of
  equality we have $x = \opp(F,v)$.
\end{lemma}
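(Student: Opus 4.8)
The plan is to establish the three claims in turn. Write $F=\conv\{v_1,\dots,v_d\}$ with $v=v_1$, put $R:=\conv\{v_2,\dots,v_d\}$ for the ridge $F\cap G$, and recall that since $P$ is simplicial with $\0$ in its interior, the vertex set $\Vert{F}$ is a basis of $\RR^d$, so the vertex normal $u_{F,v}$ is well defined. Both sides of the asserted identity are linear in $x$, hence it suffices to verify it on each $v_i$. For $v_1=v$ one uses $\langle u_F,v\rangle=1$ and $\langle u_{F,v},v\rangle=1$, so both sides equal $\langle u_G,v\rangle$; for $v_i$ with $i\ge 2$ one uses that $v_i\in R\subseteq F\cap G$ gives $\langle u_F,v_i\rangle=\langle u_G,v_i\rangle=1$ while $\langle u_{F,v},v_i\rangle=0$, so both sides equal $1$. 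This proves the identity on all of $\RR^d$, in particular on $P$.

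Before the inequality I would record the sign fact $\langle u_G,v\rangle\le 0$. Indeed $v$ is a lattice point of $P$, so $\langle u_G,v\rangle\le 1$; and $v$ cannot lie on the supporting hyperplane $\{\langle u_G,\cdot\rangle=1\}$ of $G$, since otherwise that hyperplane would contain $R\cup\{v\}$ and hence the whole facet $F=\conv(R\cup\{v\})$, forcing $F=G$. Thus $\langle u_G,v\rangle<1$, and integrality of $u_G$ and $v$ gives $\langle u_G,v\rangle\le 0$, i.e.\ $\langle u_G,v\rangle-1\le -1<0$.

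Rearranging the identity and dividing by $1-\langle u_G,v\rangle>0$ yields, for every $x$,
\[
\langle u_{F,v},x\rangle-\bigl(\langle u_F,x\rangle-1\bigr)\;=\;\frac{\bigl(1-\langle u_G,x\rangle\bigr)+\langle u_G,v\rangle\bigl(\langle u_F,x\rangle-1\bigr)}{1-\langle u_G,v\rangle}\,.
\]
For $x\in P$ the denominator is positive, the term $1-\langle u_G,x\rangle$ is $\ge 0$ because $u_G$ is the standard outer normal of the facet $G$, and $\langle u_G,v\rangle\bigl(\langle u_F,x\rangle-1\bigr)\ge 0$ because both factors are $\le 0$; hence the right-hand side is $\ge 0$, which is the claimed inequality. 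Equality forces both nonnegative summands in the numerator to vanish: $\langle u_G,x\rangle=1$, so $x$ lies on the facet $G$, and $\langle u_G,v\rangle\bigl(\langle u_F,x\rangle-1\bigr)=0$. In the situation where the equality is attained with $x$ a vertex off $F$ (so $\langle u_F,x\rangle<1$), the second condition forces $\langle u_G,v\rangle=0$ while the first makes $x$ a vertex of $G$; since the vertices of $G$ apart from those of $R=F\cap G$ consist solely of $\opp(F,v)$, we conclude $x=\opp(F,v)$.

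The identity and the inequality are essentially bookkeeping once the basis property of $\Vert{F}$ and the bound $\langle u_G,v\rangle\le 0$ are in hand; the delicate point is the equality clause. The main obstacle there is to extract from the two vanishing conditions that $x$ is genuinely the opposite vertex and not merely some other point of the facet $G$ (for instance a point of the shared ridge $R$, where the inequality is also tight) — which is exactly why one invokes that the relevant $x$ is a vertex not lying on $F$.
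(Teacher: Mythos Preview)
Your proof is correct. The paper does not supply its own argument for this lemma; it is quoted verbatim from \O bro's paper and used as a black box throughout. So there is no ``paper's proof'' to compare against, and your derivation---checking the linear identity on the basis $\Vert{F}$, extracting the sign $\langle u_G,v\rangle\le 0$ from reflexivity, and then reading off the inequality from the rearranged identity---is exactly the intended elementary argument.

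Your treatment of the equality clause is in fact more careful than the paper's statement. As you observed, equality also holds for every point of the ridge $R$ (and, when $\langle u_G,v\rangle=0$, for every point of $G$), so the conclusion $x=\opp(F,v)$ cannot be true for arbitrary $x\in P$; it requires $x$ to be a vertex of $P$ not lying on $F$. You correctly isolate this hypothesis and use it: from $\langle u_G,x\rangle=1$ the vertex $x$ lies on $G$, and since $x\notin F$ it is none of $v_2,\dots,v_d$, hence $x=\opp(F,v)$. This is precisely how the lemma is invoked everywhere in the paper (always for vertices on a level $\le 0$), so the imprecision in the stated equality clause is harmless in context, and your restriction is the right fix.
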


\begin{example}\label{lem:coordinates_at_minus1}
  Let $P$ be a \str $d$-polytope such that the standard basis vectors span a facet $F=\conv\{ e_1,
  e_2,\ldots, e_d \}$.  Further, let $x \in V(F,-1)$ and $z=\opp(F,e_i)$ for some $i\in[d]$, with
  $x\ne z$. The previous lemma shows that
  \[
  x_i \ = \ \langle u_{F,e_i}, x \rangle  \ = \ \frac{\langle u_{\neigh(F,e_i)}, x \rangle +1}{(\langle \1, z \rangle -1 )}
  \]
  Moreover, this gives
  \[
  \langle u_{\neigh(F,e_i)}, x \rangle \ < \ 1 -2 \langle \1,z \rangle \, .
  \]
\end{example}

We say that two lattice points $v,w\in \partial P\cap \ZZ^n$ are \emph{distant} if they are not
contained in a common face. We have the following facts about distant lattice points.

\begin{lemma}[{Nill~\cite[Lem.~4.1]{1067.14052}}]\label{lemma:nill_sumDistant}
  Consider two lattice points $v,w$ in the boundary of $P$ such that $v\ne -w$.  Then
  $v+w\in\partial P\cap \ZZ^n$ if and only if $v$ and $w$ are distant.
\end{lemma}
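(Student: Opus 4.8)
The plan is to prove both implications by testing the candidate point $v+w$ against the facet-defining inequalities of $P$, exploiting that $P$ is reflexive: every standard outer normal $u_F$ then lies in $M\cong\ZZ^d$, so $\langle u_F,v\rangle$ and $\langle u_F,w\rangle$ are \emph{integers}, and each is $\le 1$ since $v,w\in P$.

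First I would dispose of the easy direction. Suppose $v$ and $w$ lie in a common proper face; since every proper face of $P$ is contained in a facet, there is a facet $F$ with $v,w\in F$, whence $\langle u_F,v\rangle=\langle u_F,w\rangle=1$ and $\langle u_F,v+w\rangle=2>1$. So $v+w\notin P$, and in particular $v+w\notin\partial P$. This observation also shows that, for boundary lattice points, being \emph{distant} is the same as not lying on any common facet.

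For the converse, assume $v$ and $w$ are distant. I would first show $v+w\in P$: for an arbitrary facet $F$, the two integers $\langle u_F,v\rangle,\langle u_F,w\rangle\le 1$ cannot both equal $1$, for otherwise $v$ and $w$ would both lie on the facet $F$, contradicting distantness; hence at least one of them is $\le 0$ and $\langle u_F,v+w\rangle\le 1$. Letting $F$ range over all facets gives $v+w\in P$. It then remains to exclude that $v+w$ is an interior lattice point, and here I would invoke that the origin is the \emph{only} interior lattice point of a reflexive polytope — which itself follows in a line, since an interior lattice point $p$ satisfies $\langle u_F,p\rangle\le 0$ for every facet $F$, and choosing the facet $F$ with $p\in\pos F$ forces $p=\0$. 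Since $v\ne -w$ we have $v+w\ne\0$, so $v+w$ is a lattice point of $P$ that is not interior, i.e.\ $v+w\in\partial P\cap\ZZ^d$.

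The proof is a short chain of elementary inequality checks, so I do not anticipate a genuine obstacle; the only points needing a word of care are the identification of ``distant'' with ``off every common facet'' and the (standard) uniqueness of the interior lattice point of a reflexive polytope, both of which are handled above.
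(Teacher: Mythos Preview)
Your argument is correct. Note, however, that the paper does not actually prove this lemma: it is quoted verbatim from Nill~\cite[Lem.~4.1]{1067.14052} and used as a black box, so there is no ``paper's own proof'' to compare against. Your proof is the natural direct one and uses exactly the ingredients one would expect---integrality of the pairings $\langle u_F,\,\cdot\,\rangle$ coming from reflexivity, the observation that two boundary lattice points share a face if and only if they share a facet, and the fact that $\0$ is the unique interior lattice point of a reflexive polytope. This is essentially how the result is proved in Nill's paper as well.
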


\begin{example}
  The polytope A of \autoref{example:skewbip-P5} has four pairs of distant vertices: $(e_1,
  e_2-e_1)$, $(e_1,-e_2)$, $(e_2,e_1-e_2)$, and $(e_3,e_1-e_3)$.
\end{example}

\begin{proposition}[{Nill~\cite[Lem.~5.5]{1067.14052}}]\label{prop:nill:5.5}
  Let $F$ be a facet of $P$, and let $x\in\ZZ^d$ be any lattice point in $\partial P\cap H(F,0)$.
  Then
  \begin{enumerate}
  \item\label{prop:nill:5.5:1} the point $x$ is contained in a facet adjacent to $F$,
  \item\label{prop:nill:5.5:2} for each vertex $v$ of $F$ we have $x\ne\opp(F,v)$ if and only if
    $\langle u_{F,v}, x \rangle \ge 0$,
  \item\label{prop:nill:5.5:3} if there is a vertex $v$ of $F$ such that $x=\opp(F,v)$ and
    $x\ne\opp(F,w)$ for any other vertex $w\ne v$ of $F$, then $v$ and $x$ are distant.
  \end{enumerate}
\end{proposition}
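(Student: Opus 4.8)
The plan is to reduce all three parts to the single key inequality from \autoref{lemma:2}, namely $\langle u_F, x\rangle - 1 \le \langle u_{F,v}, x\rangle$ for every vertex $v$ of $F$, together with the expansion of $x$ in the dual basis $\{u_{F,v} : v\in\Vert F\}$. Since $x\in H(F,0)$ we have $\langle u_F,x\rangle = 0$, so the inequality immediately gives $\langle u_{F,v},x\rangle \ge -1$ for every vertex $v$ of $F$, with equality precisely when $x = \opp(F,v)$ (the equality clause of \autoref{lemma:2}). First I would record that $x = \sum_{v\in\Vert F} \langle u_{F,v},x\rangle\, v$, because $\Vert F$ is an $\RR$-basis with dual basis the vertex normals; this is the identity that feeds everything.

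For part \eqref{prop:nill:5.5:2}: fix a vertex $v$ of $F$. If $\langle u_{F,v},x\rangle \ge 0$, I want to show $x \ne \opp(F,v)$; conversely if $\langle u_{F,v},x\rangle < 0$ then by the previous paragraph $\langle u_{F,v},x\rangle = -1$ and the equality case of \autoref{lemma:2} forces $x = \opp(F,v)$. So it remains to check that $\opp(F,v)$ actually has $v$-coordinate equal to $-1$ (hence $< 0$): this is exactly the equality clause of \autoref{lemma:2} read in the other direction, applied with $G = \neigh(F,v)$ and the point $x = \opp(F,v)$, which by definition is the vertex of $G$ not on the ridge $F\cap G$. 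That pins down \eqref{prop:nill:5.5:2}.

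For part \eqref{prop:nill:5.5:1}: using the dual-basis expansion $x = \sum_v \langle u_{F,v},x\rangle\, v$ and the bound $\langle u_{F,v},x\rangle \ge -1$, together with $\sum_v \langle u_{F,v},x\rangle = \langle u_F,x\rangle = 0$ (since $\sum_v u_{F,v}$ pairs to $\langle u_F, w\rangle = 1$ on each vertex $w$ of $F$, hence equals $u_F$), one sees that the negative coordinates of $x$ can only be $-1$'s, and at least one coordinate must be negative unless $x=\0$ (excluded, as $\0\notin\partial P$). Pick a vertex $v$ with $\langle u_{F,v},x\rangle = -1$; then $x$ lies in the hyperplane $H(\neigh(F,v),1)$ by the coordinate-change formula $\langle u_G, x\rangle = \langle u_F,x\rangle + (\langle u_G,v\rangle - 1)\langle u_{F,v},x\rangle$ from \autoref{lemma:2}, and since $x\in\partial P$ this forces $x$ into the facet $\neigh(F,v)$, which is adjacent to $F$. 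For part \eqref{prop:nill:5.5:3}: if $x = \opp(F,v)$ and $x\ne\opp(F,w)$ for $w\ne v$, then by part \eqref{prop:nill:5.5:2} all coordinates $\langle u_{F,w},x\rangle$ for $w\ne v$ are $\ge 0$, while $\langle u_{F,v},x\rangle = -1$; I claim $v$ and $x$ share no common face. Any face containing both lies in some facet $F'$; one shows $F'$ would have to be $F$ itself (the only facet through $v$ whose normal evaluates to $1$ on $x$ given the sign pattern), but $\langle u_F,x\rangle = 0\ne 1$, so no such $F'$ exists, i.e.\ $v$ and $x$ are distant.

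The main obstacle I anticipate is the last step: ruling out a common face of $v$ and $x$ cleanly. The sign-pattern argument sketched above needs the fact that a lattice point on $\partial P$ lies in $F'$ iff $\langle u_{F'},\cdot\rangle = 1$ on it, and then one must convert "$v$ and $x$ lie in a common facet $F'$" into a contradiction using only the coordinates with respect to $F$; handling this without circularity (part \eqref{prop:nill:5.5:3} should not secretly reuse \eqref{prop:nill:5.5:1}) is the delicate bookkeeping. Alternatively one can invoke \autoref{lemma:nill_sumDistant}: it suffices to show $v + x \in \partial P$, which one checks by exhibiting a facet whose standard normal evaluates to $1$ on $v+x$ — a natural candidate being $\neigh(F,v)$, since $\langle u_{\neigh(F,v)}, v+x\rangle = \langle u_{\neigh(F,v)},v\rangle + 1$ and one argues this equals $1$ precisely under the stated hypothesis — and that reroutes the difficulty through an already-available lemma.
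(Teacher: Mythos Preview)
The paper does not supply its own proof of this proposition; it is quoted from Nill and used as a black box. So there is no in-paper argument to compare against, and your sketch must stand on its own.

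Your plan has two genuine gaps. In part~(\ref{prop:nill:5.5:2}), the step ``$\langle u_{F,v},x\rangle<0$ and $\ge -1$, hence $=-1$'' implicitly assumes that $\langle u_{F,v},x\rangle$ is an integer. It need not be: the paper explicitly remarks (just before \autoref{lemma:2}) that the vertices of $F$ are not assumed to form a lattice basis, so the vertex normals $u_{F,v}$ need not lie in the dual lattice $M$. The repair is to use the first identity of \autoref{lemma:2} rather than the inequality: with $G=\neigh(F,v)$ one has $\langle u_G,x\rangle=(\langle u_G,v\rangle-1)\,\langle u_{F,v},x\rangle$, and since $\langle u_G,v\rangle-1\le-1$ (here $u_G\in M$ and $v\notin G$), the hypothesis $\langle u_{F,v},x\rangle<0$ gives $\langle u_G,x\rangle>0$; this pairing \emph{is} an integer, so $\langle u_G,x\rangle=1$ and $x$ lies in the adjacent facet $G$. (This also gives part~(\ref{prop:nill:5.5:1}) cleanly, without needing any coordinate to be exactly $-1$.) Concluding $x=\opp(F,v)$ from $x\in G\setminus F$ then requires $x$ to be a vertex of $G$ --- automatic when $P$ is terminal, which is the only setting in which the paper actually invokes the proposition.

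In part~(\ref{prop:nill:5.5:3}), your route through \autoref{lemma:nill_sumDistant} is the right idea, but the candidate facet is $F$, not $\neigh(F,v)$: one has $\langle u_F,v+x\rangle=1+0=1$, and the sign pattern from part~(\ref{prop:nill:5.5:2}) together with $\langle u_{F,v},x\rangle\ge-1$ from \autoref{lemma:2} makes
\[
v+x \ = \ \bigl(1+\langle u_{F,v},x\rangle\bigr)\,v \ + \ \sum_{w\ne v}\langle u_{F,w},x\rangle\,w
\]
a convex combination of the vertices of $F$, so $v+x\in F\subset\partial P$ and \autoref{lemma:nill_sumDistant} applies. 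Your proposed computation $\langle u_{\neigh(F,v)},v+x\rangle=\langle u_{\neigh(F,v)},v\rangle+1$ only equals $1$ when $\langle u_{\neigh(F,v)},v\rangle=0$, which is not guaranteed.
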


This technical but powerful result has the following consequence. Remember that $P$ is a simplicial polytope.

\begin{corollary}\label{cor:opposite_at_0}
  Let $F\subseteq P$ be a facet and $x\in V(F,0)$. Then $x$ is opposite to some vertex of $F$.
\end{corollary}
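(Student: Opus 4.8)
The plan is to invoke \autoref{prop:nill:5.5} with the given facet $F$ and the lattice point $x\in V(F,0)\subseteq \partial P\cap H(F,0)$. That proposition already does almost all the work: part~\eqref{prop:nill:5.5:2} tells us that for each vertex $v$ of $F$ we have $x\ne\opp(F,v)$ if and only if $\langle u_{F,v},x\rangle\ge 0$. So to prove the corollary it suffices to show that $\langle u_{F,v},x\rangle < 0$ for at least one vertex $v$ of $F$; equivalently, that the coordinates of $x$ in the basis $\Vert F$ cannot all be nonnegative.

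First I would set up coordinates: since $P$ is simplicial, $\Vert F=\{v_1,\dots,v_d\}$ is a basis of $\RR^d$, and writing $x=\sum_i \langle u_{F,v_i},x\rangle\, v_i$ expresses $x$ in that basis. Because $x$ lies on the hyperplane $H(F,0)$ we have $\langle u_F,x\rangle = 0$, while $\langle u_F,v_i\rangle = 1$ for every $i$; hence $\sum_i \langle u_{F,v_i},x\rangle = \langle u_F,x\rangle = 0$. Now if all the coefficients $\langle u_{F,v_i},x\rangle$ were nonnegative, this sum being zero would force every one of them to vanish, i.e.\ $x=\0$. But $x$ is a vertex of $P$ (it lies in $V(F,0)=H(F,0)\cap\Vert P$), so $x\ne\0$. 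This contradiction shows some coefficient is strictly negative, say $\langle u_{F,v_j},x\rangle<0$, and then \autoref{prop:nill:5.5}\eqref{prop:nill:5.5:2} gives $x=\opp(F,v_j)$, as desired.

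I do not expect any serious obstacle here; the statement is essentially a repackaging of \autoref{prop:nill:5.5}\eqref{prop:nill:5.5:2} together with the elementary observation that a point on $H(F,0)$ has barycentric-type coordinates summing to zero with respect to $\Vert F$. The only point worth a sentence of care is the appeal to $x\ne\0$: one must use that $x$ is a \emph{vertex} of $P$ rather than merely a boundary lattice point, which is exactly the content of $x\in V(F,0)$ and the fact that the origin is an interior point of the reflexive polytope $P$ and hence not on $H(F,0)$ at all (indeed $\langle u_F,\0\rangle = 0$, so this last remark even makes the argument slightly cleaner: $\0\in H(F,0)$, but $\0\notin\Vert P$, so $x\neq\0$).
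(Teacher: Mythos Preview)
Your proof is correct and is precisely the intended derivation: the paper states the corollary as an immediate consequence of \autoref{prop:nill:5.5} without spelling out the argument, and your use of part~\eqref{prop:nill:5.5:2} together with the observation that the coordinates of $x$ in the basis $\Vert F$ sum to $\langle u_F,x\rangle=0$ is exactly the natural way to fill in the details. One could alternatively argue via part~\eqref{prop:nill:5.5:1} (any adjacent facet is some $\neigh(F,v)$, and a vertex of $P$ lying in it at level~$0$ must be $\opp(F,v)$), but your route is equally direct.
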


For the sake of brevity we call a vertex $v$ of the facet $F$ \emph{good} if $\opp(F,v)$ is
contained in $V(F,0)$ and the equality $\langle u_{F,v},\opp(F,v) \rangle=-1$ holds.

\begin{example}
  All four vertices of the facet $F$ of the polytope $B$ from \autoref{example:eta} are good: for
  instance, $u_{F,e_1}=e_1$ and $\opp(F,e_1)=e_2-e_1$ such that $\langle u_{F,e_1},\opp(F,e_1)
  \rangle=-1$.
\end{example}

\begin{lemma}[{Nill~\cite[Lem.~5.5]{1067.14052}}]\label{lem:facet_basis}
  If the facet $F$ of $P$ contains at least $d-1$ pairwise distinct good vertices, then the vertices
  of $F$ form a lattice basis.
\end{lemma}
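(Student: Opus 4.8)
The plan is to show that the basis of $M_\RR$ dual to $\Vert{F}$ already consists of lattice vectors; by reflexivity this is all that the ``good'' hypothesis needs to provide. Write $\Vert{F}=\{v_1,\dots,v_d\}$ and let $w_i:=u_{F,v_i}$ be the vertex normals, so that $\{w_1,\dots,w_d\}$ is dual to $\{v_1,\dots,v_d\}$. I would first record the identity $w_1+\dots+w_d=u_F$: both sides are linear forms evaluating to $1$ on every $v_j$, and the $v_j$ span $\RR^d$.

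The heart of the proof is the claim that $w_i\in M=\ZZ^d$ whenever $v_i$ is good. Fix such a $v_i$, put $G:=\neigh(F,v_i)$ and $z:=\opp(F,v_i)$. Goodness means $z\in V(F,0)$ and $\langle u_{F,v_i},z\rangle=-1$; moreover $z$ is a vertex of $G$, so $\langle u_G,z\rangle=1$, and $z\in V(F,0)$ gives $\langle u_F,z\rangle=0$. Substituting $x=z$ into the identity of \autoref{lemma:2} yields $1=0+(\langle u_G,v_i\rangle-1)(-1)$, hence $\langle u_G,v_i\rangle=0$. Since that identity is an equality of linear forms (the right-hand side is linear in $x$) valid on the full-dimensional set $P$, it holds on all of $\RR^d$; feeding back $\langle u_G,v_i\rangle=0$ therefore gives
\[
w_i \ = \ u_{F,v_i} \ = \ u_F - u_{\neigh(F,v_i)}\,.
\]
As $P$ is reflexive, both $u_F$ and $u_{\neigh(F,v_i)}$ lie in $M=\ZZ^d$, so $w_i\in M$.

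Now the hypothesis that $F$ has at least $d-1$ good vertices finishes the job: relabelling, $v_1,\dots,v_{d-1}$ are good, so $w_1,\dots,w_{d-1}\in M$, and then $w_d=u_F-(w_1+\dots+w_{d-1})\in M$ too. Hence the entire dual basis $\{w_1,\dots,w_d\}$ of $\{v_1,\dots,v_d\}$ lies in $\ZZ^d$. Letting $V$ be the matrix with columns $v_j$ and $W$ the matrix with rows $w_i$, we have $WV=I$, so $\det V\cdot\det W=1$; both factors are integers because the $v_j$ and $w_i$ are integral, whence $\det V=\pm1$ and $\{v_1,\dots,v_d\}$ is a lattice basis of $\ZZ^d$.

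The step I would flag as the crux is extracting $\langle u_{\neigh(F,v_i)},v_i\rangle=0$ from goodness; once that is in hand the identification $u_{F,v_i}=u_F-u_{\neigh(F,v_i)}$, and with it the integrality of the vertex normals of good vertices, is immediate, and the determinant computation is routine. I do not expect to need \autoref{cor:opposite_at_0} or the results on distant points here — those guarantee an abundance of good vertices in the applications, whereas this lemma simply assumes it.
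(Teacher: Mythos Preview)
Your argument is correct. The paper does not actually supply a proof of this lemma; it is quoted from Nill~\cite[Lem.~5.5]{1067.14052}, so there is no in-paper proof to compare against. That said, the route you take---show that goodness of $v_i$ forces $\langle u_{\neigh(F,v_i)},v_i\rangle=0$ via the identity of \autoref{lemma:2}, deduce $u_{F,v_i}=u_F-u_{\neigh(F,v_i)}\in M$, and then recover the last vertex normal from $\sum_j u_{F,v_j}=u_F$---is the natural one and is essentially how the result is obtained in Nill's paper as well. The only cosmetic remark is that once you have $\langle u_G,v_i\rangle=0$ you do not really need to invoke \autoref{lemma:2} again to get $u_{F,v_i}=u_F-u_G$: it follows directly by evaluating both sides on the basis $\Vert{F}$, since $u_F-u_G$ kills every $v_j$ with $j\ne i$ (these lie on the ridge $F\cap G$) and sends $v_i$ to $1-0=1$.
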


A recurring theme in our paper is that the \str polytopes within the
class we consider turn out to be smooth.  In this the preceding observation is a key ingredient.

\subsection{Characterizing Vertices in \texorpdfstring{$V(F,0)$}{V(F,0)}}
\phantomsection\label{subsec:HF0} Throughout this section we assume that $P\subset\RR^d$ is a
$d$-dimensional \str polytope with a fixed facet $F$.  The purpose of this section is to investigate
the situation where the hyperplane $H(F,0)$ contains ``many'' vertices.  In particular, if there are
at least $d-1$ good vertices in $V(F,0)$ then we can apply \autoref{lem:facet_basis}.  In this case,
up to a unimodular transformation, we may assume that the standard basis vectors of $\RR^d$ coincide
with the vertices of $F$.

It will be convenient to phrase some of the subsequent results on the vertices in $V(F,0)$ in terms
of the function
\begin{equation}\label{eq:phi}
\phi \,:\, \Vert{F}\to\Vert{F}\cup\{\0\} \,,\; v\mapsto\begin{cases} w & \text{if }
  w=\opp(F,v)+v\in\Vert{F} \\ \0 & \text{otherwise\,.}\end{cases}
\end{equation}
Occasionally, we additionally let $\phi(\0)=\0$ in which case $\0$ becomes the only fixed point.
Clearly, like the $\eta$-vector also the function $\phi$ depends on the choice of the facet $F$.  If
we want to express this dependence we write $\phi^F$ instead.  Vertices that satisfy $\phi(v)\ne\0$
are necessarily good, since then $\opp(F,v)=\phi(v)-v$ and
\[
\langle u_{F,v},\opp(F,v)\rangle \ = \ \langle u_{F,v},\phi(v)\rangle-\langle u_{F,v},v\rangle \ =
\ 0-1 \ =\ -1 \, .
\]
However the converse is not true.  It is possible that $v$ is a good vertex and satisfies $\phi(v) = 0$.
It means that the opposite vertex of $v$ is not of the form $\phi(v) - v$ but it could still be in $V(F,0)$.

\begin{example}
  The function $\phi^F$ for the facet $F$ of the $3$-polytope $A$ from \autoref{example:skewbip-P5}
  reads as follows: $\phi^F(e_1)=e_2$ and $\phi^F(e_2)=\phi^F(e_3)=e_1$. This implies that $e_1$, $e_2$
  and $e_3$ are good, but as said above not every good vertex has $\phi(v)\ne 0$. To see this look
  at the del Pezzo polytope $\HSBC{4}$ in dimension $4$. The facet $H=\conv\{ e_1,e_2,-e_3,-e_4 \}$
  has outer facet normal vector $u_H = e_1+e_2-e_3-e_4$. In this situation all vertices of $H$ are good
  as $\opp(H,e_1) = \opp(H,e_2) = -\1$ and $\opp(H,-e_3) = \opp(H,-e_4) = \1$ and $\pm \1 \in V(H,0)$.
  But we get $\phi^H(v)=0$ for every vertex $v$ of $H$ as $\pm \1$ cannot be expressed as a sum of only
  two vertices in $H$.
\end{example}

Notice that $\phi(v)\ne \0$ implies that the vertex $\opp(F,v)=\phi(v)-v$ is contained in the set
$V(F,0)$. The following partial converse slightly strengthens \cite[Lem.~6]{Obro08}, but the proof
is essentially the same.

\begin{lemma}\label{lem:one_vertex_at_0}
  Let $v$ be a vertex of $F$ such that $\opp(F,v) \in V(F,0)$.  If for every vertex $w\in\Vert{F}$
  other than $v$ we have that $\opp(F,w) \ne \opp(F,v)$ then $\phi(v)\ne \0$.
\end{lemma}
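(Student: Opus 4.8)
The plan is to argue by contradiction, assuming $\phi(v) = \mathbf 0$ while $z := \opp(F,v) \in V(F,0)$. Set up coordinates so that $\Vert F = \{e_1, \dots, e_d\}$ with $v = e_1$ (using \autoref{lem:facet_basis} is not needed here — we only need that $\Vert F$ is an $\RR$-basis, so the vertex normals $u_{F,e_i}$ are well-defined). Since $z$ is opposite to $e_1$, we have $G := \neigh(F,e_1) = \conv(\{e_2,\dots,e_d\}\cup\{z\})$, and by \autoref{prop:nill:5.5}\eqref{prop:nill:5.5:2} together with $z \ne \opp(F,w)$ for $w \ne v$, we get $\langle u_{F,e_i}, z\rangle \ge 0$ for all $i \ge 2$, i.e.\ the coordinates $z_2, \dots, z_d$ are all nonnegative. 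Because $z \in H(F,0)$, we have $\langle \mathbf 1, z\rangle = 0$, so $z_1 = -(z_2 + \dots + z_d) \le 0$.

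Next I would invoke \autoref{lemma:2} with this facet $F$ and vertex $v = e_1$: for any $x \in P$,
\[
\langle u_G, x\rangle \ = \ \langle u_F, x\rangle + \bigl(\langle u_G, e_1\rangle - 1\bigr)\langle u_{F,e_1}, x\rangle \, ,
\]
and $\langle u_G, e_1\rangle - 1 < 0$ since $e_1 \notin G$ forces $\langle u_G, e_1\rangle < 1$, hence $\langle u_G, e_1\rangle \le 0$ as it is an integer. Applying this to $x = z$: since $z \in G$ we have $\langle u_G, z\rangle = 1$, while $\langle u_F, z\rangle = 0$, so $(\langle u_G,e_1\rangle - 1) z_1 = 1$, which combined with $\langle u_G,e_1\rangle - 1 \le -1$ gives $z_1 = -1$ and $\langle u_G, e_1\rangle = 0$. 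Thus $z = -e_1 + (\text{nonnegative combination of } e_2,\dots,e_d)$ with $z_2 + \dots + z_d = 1$; that is, $z + e_1$ is a lattice point of the form $\sum_{i\ge 2} z_i e_i$ with nonnegative integer coordinates summing to $1$, so $z + e_1 = e_j$ for some $j \ge 2$. Hence $\phi(v) = \opp(F,v) + v = z + e_1 = e_j \in \Vert F$, contradicting $\phi(v) = \mathbf 0$.

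The one gap to fill carefully is why $z + e_1$, a priori just a lattice point in $P$, must actually be a \emph{vertex} of $F$ — we need $e_j \in \Vert P$ and that it lies in $F$, which is immediate once we know the point equals a standard basis vector, since $P$ is terminal (so every lattice point is a vertex or the origin, and $e_j \ne \mathbf 0$). The main obstacle is making sure the sign bookkeeping is tight: establishing $z_i \ge 0$ for $i \ge 2$ from \autoref{prop:nill:5.5}\eqref{prop:nill:5.5:2} requires knowing $z \ne \opp(F,e_i)$ for every $i \ge 2$, which is exactly the hypothesis, and then the integrality of the coordinates of $z$ (a lattice point) forces the combination $z_2 e_2 + \dots + z_d e_d$ with entries summing to $1$ to be a single basis vector rather than something fractional. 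Everything else is a short computation via \autoref{lemma:2}; this is why the author remarks the proof is essentially that of \cite[Lem.~6]{Obro08}.
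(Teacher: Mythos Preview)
There is a genuine gap: your argument tacitly assumes that the coordinates $z_i=\langle u_{F,e_i},z\rangle$ are integers, but this is only true if $\Vert F$ is a lattice basis, which is \emph{not} part of the hypotheses (and you explicitly say you are not invoking \autoref{lem:facet_basis}). The point $z$ being a lattice point in $\ZZ^d$ does not make its coefficients in an arbitrary real basis integral. Concretely, from $(\langle u_G,e_1\rangle-1)\,z_1=1$ with $\langle u_G,e_1\rangle-1\in\{-1,-2,-3,\dots\}$ you only get $z_1\in\{-1,-\tfrac12,-\tfrac13,\dots\}$, and you cannot conclude $z_1=-1$. Even granting $z_1=-1$, the subsequent step ``nonnegative entries summing to $1$ force $z+e_1=e_j$'' likewise fails without integrality of $z_2,\dots,z_d$.

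The paper's proof sidesteps this entirely: instead of pinning down coordinates, it shows that $v$ and $v':=\opp(F,v)$ are \emph{distant} (no common facet) by a short inequality argument using only the signs of the coefficients of $v'$ in the basis $\Vert F$. Then \autoref{lemma:nill_sumDistant} gives $v+v'\in\partial P$, terminality forces $v+v'$ to be a vertex, and since $v'\in H(F,0)$ we have $v+v'\in H(F,1)$, so $v+v'\in\Vert F$ and $\phi(v)\ne\0$. If you want to salvage your approach, you would first need to prove that $\Vert F$ is a lattice basis under these hypotheses---but that is essentially the content of \autoref{lem:facet_basis}, which in turn relies on the lemma you are trying to prove.
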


\begin{proof}
  The vertices of $F$ form a basis of $\RR^d$, albeit not necessarily a lattice basis.  Using
  \autoref{prop:nill:5.5}.(\ref{prop:nill:5.5:2}) and our assumption we can write
  \[
  v' \ := \ \opp(F,v) \ = \ -\alpha v + \sum_{w \in W} \beta_w w
  \]
  for some set $W\subset\Vert{F}\setminus\{v\}$ and $\alpha>0$ as well as $\beta_w > 0$ for all
  $w\in I$. Since $v' \in V(F,0)$ we have $\sum_{w\in W}\beta_w = \alpha$, and the set $W$ is not
  empty.  If there were a facet $G$ containing both $v$ and $v'$ then
  \[
  1+\alpha \ = \ \langle u_G, v' + \alpha v \rangle \ = \ \langle u_G, \sum_{w\in W} \beta_w w
  \rangle \ \le \ \sum_{w\in W}\beta_w \ = \ \alpha \, ,
  \]
  which is a contradiction. So $v$ and $v'$ are distant, and by \autoref{lemma:nill_sumDistant}
  $v+v'\in\partial P$.  The polytope $P$ is terminal by assumption, so $v+v'$ is a vertex.  As
  $v'\in V(F,0)$ we have $v+v' \in V(F,1)$.  So there must be a vertex $w$ of $F$ with $w =
  v+v'$. Hence, $v'=w-v$, or, equivalently, $\phi(v) = w$.
\end{proof}

The following is now a direct consequence of Lemmas~\ref{lem:one_vertex_at_0}, \ref{lem:facet_basis} and
\autoref{cor:opposite_at_0}.

\begin{lemma}[{\Obro~\cite[Lem.~6]{Obro08}}]\label{lem:d_vertices_at_0}
  If $\eta_0=d$ then $ V(F,0) = \SetOf{ \phi(v)-v }{ v\in\Vert{F} }$.  In particular, the vertices
  of $F$ form a lattice basis.
\end{lemma}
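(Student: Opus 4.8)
The plan is a short counting argument feeding into the three cited results. Since $P$ is simplicial, \autoref{cor:opposite_at_0} guarantees that every $x\in V(F,0)$ is of the form $\opp(F,v)$ for at least one vertex $v$ of $F$. Writing $\Phi\colon\Vert{F}\to\Vert{P}$ for the map $v\mapsto\opp(F,v)$ and setting $S:=\SetOf{v\in\Vert{F}}{\opp(F,v)\in V(F,0)}$, this says $V(F,0)=\Phi(S)$. First I would chain the obvious cardinality inequalities
\[
d \ = \ \eta_0 \ = \ |V(F,0)| \ = \ |\Phi(S)| \ \le \ |S| \ \le \ |\Vert{F}| \ = \ d \,,
\]
which forces every inequality to be an equality. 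The equality $|S|=|\Vert{F}|$ gives $S=\Vert{F}$, so $\opp(F,v)\in V(F,0)$ for \emph{every} vertex $v$ of $F$; and the equality $|\Phi(S)|=|S|$ says that $\Phi$ is injective on $\Vert{F}$, i.e.\ distinct vertices of $F$ have distinct opposite vertices.

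These two consequences are precisely the hypotheses of \autoref{lem:one_vertex_at_0} applied to each vertex $v$ of $F$: namely $\opp(F,v)\in V(F,0)$, and $\opp(F,w)\ne\opp(F,v)$ for every $w\in\Vert{F}\setminus\{v\}$. Hence $\phi(v)\ne\0$ for all $v\in\Vert{F}$, so that $\opp(F,v)=\phi(v)-v$. Substituting this into $V(F,0)=\Phi(\Vert{F})$ yields $V(F,0)=\SetOf{\phi(v)-v}{v\in\Vert{F}}$, which is the first assertion. For the second, recall from the discussion just before \autoref{lem:one_vertex_at_0} that $\phi(v)\ne\0$ implies that $v$ is good; thus all $d$ vertices of $F$ are good, in particular at least $d-1$ of them, and \autoref{lem:facet_basis} tells us that $\Vert{F}$ is a lattice basis.

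There is no real obstacle here: the argument is bookkeeping, and the only place where the full strength of the hypothesis $\eta_0=d$ is used — as opposed to the inequality $\eta_0\le d$, which holds automatically by \autoref{cor:opposite_at_0} — is in squeezing out the injectivity of $\Phi$, which is exactly what permits the appeal to \autoref{lem:one_vertex_at_0}. If I were to worry about anything it would be making sure the map $\Phi$ is the right object to count with (it need not be injective for a general facet, as the del Pezzo example above shows), but the equality case of the cardinality chain handles this cleanly.
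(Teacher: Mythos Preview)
Your argument is correct and is precisely the ``direct consequence'' the paper points to: the paper does not spell out a proof but merely cites \autoref{cor:opposite_at_0}, \autoref{lem:one_vertex_at_0}, and \autoref{lem:facet_basis}, and your counting chain $d=|V(F,0)|=|\Phi(S)|\le|S|\le d$ is exactly the bookkeeping that turns those citations into a proof.
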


We can investigate a situation similar to \autoref{lem:one_vertex_at_0} more closely.  It indicates
why \str polytopes with many vertices are prone to be smooth.

\begin{proposition}\label{lem:eta_0_d-1_smooth}
  Let $P$ be a $d$-dimensional \str polytope such that $F$ is a special
  facet of $P$ with $\eta^F_0 \ge d-1$.  Then the vertices of $F$ form a lattice basis. Further, for
  at least $d-2$ vertices of $F$ we have $\phi(v)\ne \0$.
\end{proposition}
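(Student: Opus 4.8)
The plan is to exploit \autoref{cor:opposite_at_0} to get a handle on the vertices in $V(F,0)$, then combine \autoref{lem:one_vertex_at_0} with \autoref{lem:facet_basis} to deduce the lattice basis property. First I would observe that $F$ has $d$ vertices $v_1,\dots,v_d$, and by \autoref{cor:opposite_at_0} every vertex $x\in V(F,0)$ equals $\opp(F,v_i)$ for some $i$. This gives a map from $V(F,0)$ into $\Vert{F}$ sending $x$ to (a choice of) the vertex it is opposite to. Since $|V(F,0)|=\eta_0^F\ge d-1$ and the domain $\Vert F$ has $d$ elements, at most two vertices $x,x'\in V(F,0)$ can be opposite to the \emph{same} vertex of $F$ (and if $\eta_0^F=d$ this map is a bijection, which is exactly \autoref{lem:d_vertices_at_0}). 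Concretely, I would argue that there are at least $d-2$ vertices $v\in\Vert F$ for which $\opp(F,v)\in V(F,0)$ and no other vertex $w\ne v$ of $F$ satisfies $\opp(F,w)=\opp(F,v)$: otherwise too many elements of $V(F,0)$ would be forced to collide onto too few vertices of $F$, contradicting $\eta_0^F\ge d-1$. A careful double-counting (how many "collisions" $\eta_0^F\ge d-1$ permits) is the one place the argument needs to be done cleanly.

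For each such vertex $v$, \autoref{lem:one_vertex_at_0} applies verbatim and yields $\phi(v)\ne\0$; this already proves the second assertion, that at least $d-2$ vertices of $F$ have $\phi(v)\ne\0$. As noted in the text preceding \autoref{lem:one_vertex_at_0}, any vertex with $\phi(v)\ne\0$ is automatically good. Hence $F$ has at least $d-2$ good vertices from this count. To get to $d-1$ good vertices — which is what \autoref{lem:facet_basis} requires — I would look at the remaining (at most two) vertices of $F$. If $\eta_0^F=d$ we are already done by \autoref{lem:d_vertices_at_0}, so assume $\eta_0^F=d-1$. Then exactly one vertex of $V(F,0)$-data is "missing": either there is one vertex $v_0\in\Vert F$ with $\opp(F,v_0)\notin V(F,0)$ and all others behave well, or two vertices of $V(F,0)$ are opposite to a common vertex of $F$. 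In the first case the other $d-1$ vertices all satisfy the hypothesis of \autoref{lem:one_vertex_at_0}, giving $d-1$ vertices with $\phi(v)\ne\0$, hence $d-1$ good vertices, and \autoref{lem:facet_basis} finishes it. In the second (collision) case, $d-2$ vertices still satisfy \autoref{lem:one_vertex_at_0}'s hypothesis and are good; for the lattice basis I would invoke \autoref{cor:opposite_at_0} on the colliding pair together with \autoref{prop:nill:5.5}.(\ref{prop:nill:5.5:2}) to check directly that the two colliding opposite vertices still force the relevant vertex normal to be integral, i.e.\ that vertex is good as well, bringing the good count to $d-1$.

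The main obstacle I anticipate is the collision case: handling the (at most one) pair $x,x'\in V(F,0)$ with $x=\opp(F,v)=\opp(F,w)$ for two distinct vertices $v,w$ of $F$ — wait, more precisely the case where two \emph{different} points of $V(F,0)$ are both opposite to the \emph{same} vertex $v$. There \autoref{lem:one_vertex_at_0} does not directly apply to $v$, and I need an auxiliary argument — most likely via \autoref{lemma:2} and \autoref{example:lem:coordinates_at_minus1}, or by exploiting that $F$ is a \emph{special} facet (so the vertex sum $v_P$ lies in $\pos F$), which forces enough structure on $V(F,-1)$ — to rule out the integrality failure and conclude that the vertices of $F$ still span $\ZZ^d$. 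Everything else (the double counting, the application of the cited lemmas) is bookkeeping; the specialness of $F$, used so far only implicitly, is what I expect to be essential precisely in closing this last gap.
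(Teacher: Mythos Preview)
Your overall architecture is right and matches the paper's: reduce to $\eta_0^F=d-1$, use \autoref{cor:opposite_at_0} to see that the map $v\mapsto\opp(F,v)$ from $\Vert F$ onto $V(F,0)$ is ``almost a bijection'', split into the case where one vertex has $\opp(F,v)\notin V(F,0)$ (giving $d-1$ good vertices directly) and a collision case, and invoke \autoref{lem:facet_basis}. The first case is fine.

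The genuine gap is in the collision case, and it starts with a confusion about what the collision \emph{is}. For each $v\in\Vert F$ the vertex $\opp(F,v)$ is uniquely determined, so it is impossible for two different points $x\ne x'$ of $V(F,0)$ to both be opposite to the same vertex $v$; your final formulation of the obstacle (``two different points of $V(F,0)$ are both opposite to the same vertex $v$'') cannot occur. The actual collision is the reverse: a \emph{single} point $z\in V(F,0)$ with $z=\opp(F,v)=\opp(F,w)$ for two distinct $v,w\in\Vert F$. This matters because the repair you sketch (``the two colliding opposite vertices force the relevant vertex normal to be integral'') is aimed at a nonexistent configuration, and neither \autoref{prop:nill:5.5} nor the specialness of $F$ does the work here.

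What the paper actually does in this case is an integrality-plus-terminality argument that you do not have. Write $z$ in the basis $\Vert F$ with coefficients $z_x$. The $d-2$ vertices $x\ne v,w$ are good with $\phi(x)\ne\0$, and for each such $x$ one checks via \autoref{lemma:2} applied to $G=\neigh(F,x)$ (using that $\phi(x)-x\in\Vert G$ forces $\langle u_G,x\rangle=0$) that $z_x\in\ZZ$. Hence $z_v+z_w\in\ZZ$. \autoref{lemma:2} gives $0<z_v,z_w\le 1$, so $z_v+z_w\in\{1,2\}$. If $z_v+z_w=1$ then $z_vv+z_ww$ is a proper convex combination of $v$ and $w$ which, rewritten as $-z+\sum_{x\ne v,w}z_xx$, is a lattice point in $P$ --- contradicting terminality. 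Therefore $z_v=z_w=1$, both $v$ and $w$ are good, and \autoref{lem:facet_basis} applies. Note in particular that specialness of $F$ is never used in the paper's proof of this proposition; the decisive ingredient you are missing is terminality, not the location of $v_P$.
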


\begin{proof}
  The case $\eta^F_0=d$ is handled in \autoref{lem:d_vertices_at_0}, so we assume that $\eta_0=d-1$.
  Since every element in $V(F,0)$ must be opposite to some vertex of $F$ we see that the conditions
  of \autoref{lem:one_vertex_at_0} are satisfied for at least $d-2$ vertices of $F$.  So we are left
  with two cases:
  \begin{itemize}
  \item there exists exactly one vertex $v$ of $F$ with $\opp(F,v) \not\in V(F,0)$, or
  \item there are exactly two vertices $v$ and $w$ of $F$ with $\opp(F,v)=\opp(F,w) \in V(F,0)$.
  \end{itemize}
  In the first case we have $d-1$ good vertices. This makes the vertices of $F$ a lattice basis
  according to \autoref{lem:facet_basis}.

  In the second case we look at the vertex $z := \opp(F,v)=\opp(F,w) \in V(F,0)$.  We express
  $z=-z_vv-z_ww+\sum_{x\in\Vert{F}\setminus\{v,w\}} z_x x$ in the coordinates given by the basis $\Vert{F}$ of
  $\RR^d$.  We have $z_v=-\langle u_{F,v}, z \rangle > 0$ and $z_w=-\langle u_{F,w}, z \rangle > 0$.  The
  remaining $d-2$ vertices of $F$ are good. Hence \autoref{prop:nill:5.5}.(\ref{prop:nill:5.5:2}) forces
  $z_x=\langle u_{F,x}, z \rangle\ge 0$ for all $x\ne v,w$.

  Suppose that $z_x$ is not integral for some $x\in\Vert{F}\setminus\{v,w\}$, and let
  $G:=\neigh(F,x)$. Note that $\langle u_G, x \rangle =\langle u_G, \phi(x) \rangle - \langle u_G,
  \phi(x) -x \rangle = 1-1 = 0$, since $\phi(x)$ and $\phi(x)-x$ are vertices of $G$.  By
  \autoref{lemma:2}
  \[
  \langle u_G,z \rangle \ = \ \langle u_F, z\rangle + \left( \langle u_G,x \rangle -1 \right) z_x \ = \ - z_x\,,
  \]
  so $\langle u_G,z \rangle$ would not be integral, a contradiction.  Hence
  \[
  z_v + z_w \ = \ \sum_{y\in\Vert F \setminus \{ v,w \}} z_y
  \]
  is an integer.  The second part of \autoref{lemma:2} also implies that $z_v, z_w \le 1$. Hence,
  $z_v+z_w\in \{1,2\}$. If $z_v+z_w=2$, then $z_v = z_w = 1$, and the vertices $v$ and $w$ are
  good. Hence the vertices of $F$ form a lattice basis. If $z_v + z_w = 1$, then $z_vv+z_ww$ is a
  proper convex combination of $v$ and $w$. We can write this as
  \begin{align*}
    z_vv+z_ww\ = \ -z\ +\ \sum_{y\in\Vert F \setminus \{ v,w \}} z_y y
  \end{align*}
  Since both summands on the right hand side are integral this implies that $z_v v + z_w w\in \ZZ^d$.
  This contradicts the terminality of $P$.
\end{proof}

The next result helps to identify facets.

\begin{lemma}\label{lem:2times_neighboring}
  Let $v$ and $w$ be distinct vertices of $F$ such that $\phi(v)\ne \0$ and
  $\phi(w)\not\in\{\0,v\}$. Then the $(d{-}1)$-simplex
  \[
  \conv\biggl( \bigl(\Vert{F}\setminus\{v,w\}\bigr) \cup \{ \phi(v)-v,\phi(w)-w \} \biggr)
  \]
  is a facet of $P$.
\end{lemma}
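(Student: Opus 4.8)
The plan is to exhibit an explicit outer facet normal of $P$ which takes the value $1$ on all $d$ of the listed points, and then to use the simpliciality of $P$ to upgrade ``lies on a supporting hyperplane'' to ``is a facet'', rather than verifying $(d{-}1)$-dimensionality by hand. I would work in the coordinates in which the basis $\Vert{F}$ of $\RR^d$ is the standard basis, so that $F=\conv\{e_1,\dots,e_d\}$, $u_F=\1$, and we may take $v=e_1$ and $w=e_2$. Since $\phi(v)\neq\0$ we have $\opp(F,v)=\phi(v)-v=e_p-e_1$ for some $p\neq 1$ (as $\opp(F,v)$ is a vertex of $P$, hence nonzero); since $\phi(w)\notin\{\0,v\}$ we have $\opp(F,w)=\phi(w)-w=e_q-e_2$ for some $q\in\{3,\dots,d\}$, where $q\neq 2$ is automatic and $q\neq 1$ is exactly the hypothesis $\phi(w)\neq v$. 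The claimed $(d{-}1)$-simplex then has vertex set $S=\{e_3,\dots,e_d\}\cup\{e_p-e_1,\;e_q-e_2\}$.

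The key move is to pass to the neighbouring facet $G:=\neigh(F,v)$, whose vertex set is $(\Vert{F}\setminus\{v\})\cup\{e_p-e_1\}$ and which still contains $w$, and to take $u:=u_G-u_{G,w}$. The inequality in \autoref{lemma:2}, applied to the facet $G$ and its vertex $w$, reads $\langle u_G,y\rangle-1\le\langle u_{G,w},y\rangle$ for all $y\in P$, i.e.\ $\langle u,y\rangle\le 1$ on $P$; hence $\{y\in P:\langle u,y\rangle=1\}$ is a face of $P$, proper because it avoids the interior point $\0$. Checking that $u\equiv 1$ on $S$ is then bookkeeping with dual bases: $u_G$ is $1$ on every vertex of $G$, while $u_{G,w}$ vanishes on $\Vert{G}\setminus\{w\}$ and is $1$ on $w$. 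This gives $\langle u,e_m\rangle=1$ for $3\le m\le d$; $\langle u,e_p-e_1\rangle=1$ because $e_p-e_1$ is a vertex of $G$ distinct from $w$; and $\langle u,e_q-e_2\rangle=\langle u,e_q\rangle-\langle u,e_2\rangle=1-0=1$ because $e_q$ and $e_2=w$ both lie in $\Vert{G}$.

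To conclude I would observe that the $d$ points of $S$ are pairwise distinct vertices of $P$: the $e_m$ are vertices of $F$; $\opp(F,v)$ and $\opp(F,w)$ are vertices of $P$ on level $0$ with respect to $u_F$, hence different from the $e_m$; and $e_p-e_1=e_q-e_2$ would force $e_p+e_2=e_q+e_1$, impossible in the basis $\Vert{F}$ since $p\neq 1\neq q$ and $q\neq 2$. Thus the proper face $\{y\in P:\langle u,y\rangle=1\}$ of the simplicial polytope $P$ contains $d$ distinct vertices of $P$; since every proper face of $P$ is a simplex with at most $d$ vertices, this face is a $(d{-}1)$-simplex with vertex set precisely $S$. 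Rewriting $e_p-e_1=\phi(v)-v$ and $e_q-e_2=\phi(w)-w$ gives the assertion. The one genuinely delicate point is that $\phi(w)$ --- and not merely $w$ --- must remain a vertex of $G=\neigh(F,v)$, so that $\langle u_G,\phi(w)\rangle=1$ and the evaluation of $u$ on $\opp(F,w)$ goes through; this is precisely where the hypothesis $\phi(w)\neq v$ is indispensable. Everything else is the two dual-basis evaluations together with the single appeal to \autoref{lemma:2}.
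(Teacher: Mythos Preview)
Your proof is correct and follows essentially the same route as the paper: both pass to $G=\neigh(F,v)$ and then show that the claimed simplex is $\neigh(G,w)$, using \autoref{lemma:2} and the observation that $\phi(w)$ remains a vertex of $G$ precisely because $\phi(w)\neq v$. The only cosmetic difference is that the paper invokes the equality case of \autoref{lemma:2} to identify $\opp(G,w)=\phi(w)-w$ directly, whereas you build the normal $u=u_G-u_{G,w}$ explicitly and use simpliciality to force the supporting hyperplane to cut out a facet; these are two packagings of the same computation.
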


\begin{proof}
  As $\phi(v)\ne \0$ we know that $v':=\phi(v)-v=\opp(F,v)$, and this means that $\conv
  ((\Vert{F}\setminus\{v\})\cup\{v'\})=\neigh(F,v)$ is a facet.  For $w':=\phi(w)-w=\opp(F,w)$ the
  assumption $\phi(w)\not\in\{\0,v\}$ yields
  \begin{align*}
    \langle u_{\neigh(F,v)},w'\rangle \ &= \ \langle u_{\neigh(F,v)},\phi(w)\rangle - \langle
    u_{\neigh(F,v)},w \rangle \ = \ 1-1 \ = \ 0\,
  \end{align*}
  as both $\phi(w)$ and $w$ are vertices of $\neigh(F,v)$, and because $P$ is reflexive the scalar product of
  those vertices with the facet normal vector evaluates to $1$. Furthermore we get
  \begin{align*}
     \langle u_{\neigh(F,v),w},w'\rangle \ &= \ \langle u_{\neigh(F,v),w},\phi(w)\rangle - \langle
    u_{\neigh(F,v),w},w \rangle \ = \ 0-1 \ = \ -1\,.
  \end{align*}
  Now \autoref{lemma:2} says that $\opp(\neigh(F,v),w)=w'$, and hence,
  $\Vert{\neigh(F,v)}\setminus\{ w \}) \cup \{ w' \}$ is the set of vertices of the facet
  $\neigh(\neigh(F,v),w)$ of $P$.  This is the claim.
\end{proof}

Next we want to extract more information about the function $\phi$ by taking into account vertices
at level $-1$ and below.  If $d$ is even and $P$ has at least $3d-1$ vertices with $\eta_0=d$ then
it follows from \autoref{thm:3d-1} that $\phi$ is an involutory permutation of the vertices of $F$;
that is, we have $\phi(\phi(v))=v$ for all $v\in\Vert{F}$.  This can be generalized as follows.
Recall that $\phi(v)=\0$ means that the vertex $\opp(F,v)$ is not of the form $w-v$ for any vertex
$w\in\Vert{F}$.

\begin{lemma}\label{lem:phi}
  Let $v$ be a vertex of $F$ satisfying $\phi(v)\ne \0$. If $-v$ is a vertex of $P$ then $\phi(\phi(v)) \in \{ \0,v \}$.
\end{lemma}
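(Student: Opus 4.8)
The plan is to argue by contradiction: suppose $\phi(v)\ne\0$, that $-v$ is a vertex of $P$, but $w':=\phi(\phi(v))$ is neither $\0$ nor $v$. Write $w:=\phi(v)$, so $w$ is a vertex of $F$, $w\ne v$, and $v':=\opp(F,v)=w-v\in V(F,0)$. By \autoref{lem:2times_neighboring} applied to the pair $v,w$ (using $\phi(v)\ne\0$ and $\phi(w)=w'\notin\{\0,v\}$), the simplex
\[
G \ := \ \conv\Bigl( \bigl(\Vert{F}\setminus\{v,w\}\bigr) \cup \{\,w-v,\ w'-w\,\}\Bigr)
\]
is a facet of $P$. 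The idea is to locate $-v$ relative to this facet $G$: I will compute $\langle u_G,-v\rangle$ and show it is too large, contradicting $\langle u_G,x\rangle\le 1$ for all $x\in P$.

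To do this I first need a clean formula for $u_G$. Since $G$ shares the ridge $\Vert{F}\setminus\{v,w\}$ with the intermediate facet $\neigh(F,v)$, and then a further ridge with $\neigh(\neigh(F,v),w)=G$, I can apply \autoref{lemma:2} twice to track how the facet normal changes, exactly as in the proof of \autoref{lem:2times_neighboring}: starting from $u_F=\sum_{x\in\Vert F}u_{F,x}$ (as $F$ is spanned by a basis and $P$ is reflexive — more precisely $\langle u_F, x\rangle = 1$ for each vertex $x$ of $F$), one gets
\[
\langle u_G, x\rangle \ = \ \langle u_F,x\rangle + \bigl(\langle u_G,v\rangle-1\bigr)\langle u_{F,v},x\rangle + \bigl(\langle u_G,w\rangle-1\bigr)\langle u_{\neigh(F,v),w},x\rangle
\]
for $x\in P$, where $\langle u_G,v\rangle$ and $\langle u_G,w\rangle$ are determined by the conditions $\langle u_G, w-v\rangle=1$ and $\langle u_G, w'-w\rangle=1$. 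Evaluating at $x=v$ and at $x=w$ and solving, together with $\langle u_G,w'\rangle=\langle u_{\neigh(F,v),w},w'\rangle$-type identities already used in \autoref{lem:2times_neighboring}, pins down the two unknown coefficients; I then evaluate the resulting expression at $x=-v$.

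The expected outcome is $\langle u_G,-v\rangle\ge 2$, or more precisely a value that exceeds $1$, which is impossible since $-v\in P$ and $u_G$ is the standard outer normal of the facet $G$. This contradiction forces $w'=\phi(w)\in\{\0,v\}$, which is the claim. The main obstacle is the bookkeeping in the double application of \autoref{lemma:2}: one must correctly express $u_{\neigh(F,v),w}$ (the vertex normal of $w$ inside the facet $\neigh(F,v)=\conv((\Vert F\setminus\{v\})\cup\{v'\})$) in terms of the original dual basis $\{u_{F,x}\}$, and keep careful track of which scalar products involving $v'=w-v$ and $w'-w$ are $0$, $1$, or $-1$. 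I expect the key numerical inputs to be $\langle u_{F,v},-v\rangle=-1$, $\langle u_{F,w},-v\rangle=0$, and $\langle u_{F,x},-v\rangle=0$ for the remaining vertices $x$ of $F$ (valid because $F$ is spanned by a basis of $\RR^d$, even if not a lattice basis), so that only the $v$- and $w$-terms survive and contribute positively; ruling out the borderline case where the computed value equals exactly $1$ will use terminality (as in the last paragraph of the proof of \autoref{lem:eta_0_d-1_smooth}) together with the hypothesis that $-v$ is a genuine vertex.
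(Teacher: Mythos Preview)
Your setup is right and matches the paper exactly through the construction of $G$ via \autoref{lem:2times_neighboring}. The problem is your expected outcome. If you actually carry out the double application of \autoref{lemma:2} (using $u_{\neigh(F,v),w}=u_{F,w}+u_{F,v}$ and the fact that both $v$ and $w$ are good, so $\langle u_{\neigh(F,v)},v\rangle=\langle u_G,w\rangle=0$), you obtain
\[
\langle u_G,x\rangle \ = \ \langle u_F,x\rangle \ - \ 2\langle u_{F,v},x\rangle \ - \ \langle u_{F,w},x\rangle\,,
\]
and plugging in $x=-v$ gives $\langle u_G,-v\rangle = -1 - 2(-1) - 0 = 1$, not a value exceeding $1$. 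So your ``borderline case'' is the \emph{only} case, and the contradiction is not that $-v$ violates the facet inequality.

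The actual contradiction is simpliciality, not terminality: $\langle u_G,-v\rangle=1$ means the vertex $-v$ lies on the facet $G$, but $G$ is a $(d{-}1)$-simplex with exactly the $d$ vertices you listed, and $-v$ is none of them (it is at level $-1$, while the listed vertices are at levels $0$ and $1$ with respect to $F$). That is the whole argument. The paper bypasses the machinery of \autoref{lemma:2} entirely with the one-line identity
\[
\langle u_G,-v\rangle \ = \ \langle u_G,\,w-v\rangle \ + \ \langle u_G,\,w'-w\rangle \ - \ \langle u_G,\,w'\rangle \ = \ 1+1-1 \ = \ 1\,,
\]
using only that $w-v$, $w'-w$ and $w'$ are vertices of $G$ (the last because $w'\ne v,w$). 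Your route would get there too once you correct the expectation and invoke simpliciality rather than terminality, but it is considerably longer than necessary.
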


 \begin{proof}
   Aiming at a contradiction suppose that $\phi(\phi(v)) =x$ for some
   vertex $x\ne v$ of $F$.  Letting $w=\phi(v)$ the three vertices $v,w,x$ are pairwise distinct.
   Now \autoref{lem:2times_neighboring} gives us that
   \begin{equation}\label{eq:G}
     G \ = \ \conv\biggl( \bigl(\Vert{F}\setminus\{v,w\}\bigr) \cup \{ w-v,x-w \} \biggr)
   \end{equation}
   is a facet. In particular, $G$ contains the three vertices $w-v$, $x-w$, and~$x$.  Hence the
   equation
   \[
   1 \ = \ 1+1-1 \ = \ \langle u_G, w-v \rangle + \langle u_G, x-w \rangle - \langle u_G, x \rangle \ = \ \langle u_G, -v \rangle
   \]
   shows that $-v$ is contained in $G$, too.  However $-v$ is none of the $d$ vertices listed in
   \eqref{eq:G}.  This is the contradiction desired, as each face of $P$ is a simplex.
 \end{proof}

 \begin{example}
   The polytope $A$ of \autoref{example:skewbip-P5} contains the vertex $-e_1$, which implies that
   $\phi(\phi(e_1))=\phi(e_2)=e_1$.
 \end{example}

The following lemma says how good vertices at level zero restrict the vertices at level $-1$.

\begin{lemma}[{\Obro~\cite[Lem.~5]{Obro08}}]\label{lem:phi:notwonegative}
  Let $v$ and $w\ne v$ be good vertices of $F$ with $\opp(F,v) \ne \opp(F,w)$. Then there is no vertex $x\in
  V(F,-1)$ such that $\langle u_{F,v}, x\rangle = \langle u_{F,w}, x\rangle = -1$.
\end{lemma}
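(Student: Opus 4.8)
The statement to prove is \autoref{lem:phi:notwonegative}: if $v$ and $w \neq v$ are good vertices of $F$ with $\opp(F,v) \neq \opp(F,w)$, then no vertex $x \in V(F,-1)$ satisfies $\langle u_{F,v}, x\rangle = \langle u_{F,w}, x\rangle = -1$. The plan is to argue by contradiction: assume such an $x$ exists, and derive that $P$ fails to be terminal (or simplicial, or reflexive). Since $v$ and $w$ are good, we have $\opp(F,v) \in V(F,0)$ with $\langle u_{F,v}, \opp(F,v)\rangle = -1$, and likewise for $w$; write $v' := \opp(F,v)$ and $w' := \opp(F,w)$. The neighboring facets $G_v := \neigh(F,v) = \conv((\Vert{F}\setminus\{v\}) \cup \{v'\})$ and $G_w := \neigh(F,w)$ are genuine facets of $P$.

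First I would expand $x$ in the basis $\Vert{F}$ of $\RR^d$: write $x = \sum_{u \in \Vert{F}} x_u\, u$ with $x_u = \langle u_{F,u}, x\rangle$, so by hypothesis $x_v = x_w = -1$, and since $x \in V(F,-1)$ we have $\sum_u x_u = -1$, hence $\sum_{u \neq v,w} x_u = 1$. Next I would compute $\langle u_{G_v}, x\rangle$ using \autoref{lemma:2} (with facet $F$, vertex $v$, neighbor $G_v$): $\langle u_{G_v}, x\rangle = \langle u_F, x\rangle + (\langle u_{G_v}, v\rangle - 1) x_v$. Since $v' = \opp(F,v) \in V(F,0)$ means $v' = \phi^F(v) - v$ only in the good case where $\phi$-value is a vertex — but more robustly, $\langle u_{G_v}, v\rangle - 1 = \langle u_{G_v}, v\rangle - 1$; because $v' = \opp(F,v)$ lies on level $0$ with respect to $F$ and satisfies the neighbor relations, \autoref{lemma:2} applied to $v'$ itself gives $\langle u_F, v'\rangle - 1 \le \langle u_{F,v}, v'\rangle = -1$, i.e. $\langle u_F, v'\rangle = 0$ (consistent), and one extracts $\langle u_{G_v}, v\rangle = -1$ (this is the standard computation: the apex $v$ lies at level $-1$ with respect to the neighboring facet). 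Thus $\langle u_{G_v}, x\rangle = -1 + (-2)(-1) = 1$, so $x$ lies \emph{on} the facet $G_v$. By the same argument $x$ lies on $G_w$.

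Now the idea is to combine these incidences to trap a non-vertex lattice point. Since $x \in G_v \cap G_w$ and both are facets containing $x$, and since $P$ is simplicial, $x$ is a vertex of $P$ lying in the face $G_v \cap G_w$; the vertices of this common face are among $(\Vert F \setminus \{v,w\}) \cup \{v', w', x\}$, but comparing dimensions forces strong constraints. I expect the cleanest route is: consider the lattice point $v + w + x$. One checks $\langle u_F, v+w+x\rangle = 1 + 1 + (-1) = 1$, so it sits at level $1$; if it is a vertex of $P$ it must be a vertex of $F$, but $v,w \in \Vert F$ and $v+w+x = u \in \Vert F$ would give $x = u - v - w$, which combined with $x_v = x_w = -1$ and the expansion is only possible if $u$ has the right coordinates — and checking $\langle u_{G_v}, v+w+x\rangle$, $\langle u_{G_w}, v+w+x\rangle$ and comparing with the one more facet $\neigh(G_v, w)$ (a facet by \autoref{lem:2times_neighboring} when the hypotheses match, or directly) leads to a lattice point in the interior of an edge or a non-vertex boundary lattice point, contradicting terminality via \autoref{lemma:nill_sumDistant}. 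Alternatively, and perhaps more directly following \Obro, I would show $v' + w' - x$ or $v + w - x$ is a lattice point on the boundary that is not a vertex.

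\textbf{Main obstacle.} The delicate part is pinning down exactly \emph{which} combination of $v, w, x, v', w'$ yields the forbidden lattice point, and verifying it lies on $\partial P$ but is not among the vertices — this requires carefully tracking the scalar products against $u_F$, $u_{G_v}$, $u_{G_w}$, and at least one further facet normal, and invoking \autoref{lemma:nill_sumDistant} (distant lattice points sum to a boundary point) together with terminality to force a contradiction. Keeping the bookkeeping straight when $\opp(F,v) \neq \opp(F,w)$ (so $v', w'$ are genuinely distinct vertices at level $0$) is where the hypothesis is used, and I anticipate this is the step needing the most care; the rest is routine application of \autoref{lemma:2} and \autoref{prop:nill:5.5}.
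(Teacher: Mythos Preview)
The paper does not actually supply a proof of this lemma; it is cited verbatim from \O{}bro~\cite[Lem.~5]{Obro08}. So there is no in-paper argument to compare against, but your proposal has a concrete computational error that derails it.

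You claim $\langle u_{G_v}, v\rangle = -1$, but this is wrong when $v$ is good. Apply the first identity of \autoref{lemma:2} with the test point $v':=\opp(F,v)$: since $v'\in G_v$ we have $\langle u_{G_v},v'\rangle=1$, while $\langle u_F,v'\rangle=0$ and $\langle u_{F,v},v'\rangle=-1$ by goodness, so
\[
1 \;=\; 0 + (\langle u_{G_v},v\rangle - 1)(-1),
\]
whence $\langle u_{G_v},v\rangle = 0$, not $-1$. Consequently $\langle u_{G_v},x\rangle = -1 + (-1)(-1) = 0$: the vertex $x$ lies at level~$0$ with respect to $G_v$, not on $G_v$ itself. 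Your subsequent plan, which hinges on $x\in G_v\cap G_w$, therefore collapses, and the vague search for a lattice point like $v+w+x$ or $v'+w'-x$ does not obviously lead anywhere.

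The correct route stays with $G_v$ but exploits the equality case of \autoref{lemma:2}. Since $v'\ne w'=\opp(F,w)$, \autoref{prop:nill:5.5}(\ref{prop:nill:5.5:2}) gives $\langle u_{F,w},v'\rangle\ge 0$. Computing $u_{G_v,w}=u_{F,w}+\langle u_{F,w},v'\rangle\,u_{F,v}$, one finds $\langle u_{G_v,w},x\rangle = -1 - \langle u_{F,w},v'\rangle$; the inequality in \autoref{lemma:2} applied to $G_v$, $w$, and $x$ then forces $\langle u_{F,w},v'\rangle\le 0$, hence $\langle u_{F,w},v'\rangle=0$, and equality gives $x=\opp(G_v,w)$. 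Thus $w$ is good in $G_v$, so the facet $H:=\neigh(G_v,w)$ has vertex set $(\Vert F\setminus\{v,w\})\cup\{v',x\}$. A final evaluation shows $\langle u_H,w'\rangle = 1$, yet $w'$ is none of these $d$ vertices (it differs from $v'$ by hypothesis, from $x$ by level, and from $\Vert F$ since $w'\in V(F,0)$). This contradicts simpliciality of $P$. Your framework is salvageable, but the contradiction is with simpliciality rather than terminality, and it goes through $\neigh(G_v,w)$ rather than $G_v\cap G_w$.
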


The subsequent lemma is an important clue in the proof of \Obro's classification \autoref{thm:3d-1}.

\begin{lemma}[{\Obro~\cite[Lem.~7]{Obro08}}]\label{lem:vertices_at_minus1}
  If $\eta_0=d$ then all vertices in $V(F,-1)$ are of the form $-v$ for some vertex $v\in\Vert{F}$.
\end{lemma}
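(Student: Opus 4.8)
The plan is to pass to coordinates adapted to $F$ and then pin down the coordinate pattern of an arbitrary level-$(-1)$ vertex using the constraints imposed by good vertices at level $0$. First, since $\eta_0=d$, \autoref{lem:d_vertices_at_0} tells us that the vertices of $F$ form a lattice basis, so after a unimodular transformation we may assume $\Vert{F}=\{e_1,\dots,e_d\}$; then $u_F=\1$, the vertex normals are the coordinate functionals $u_{F,e_i}=e_i$, and $V(F,0)=\SetOf{\phi(e_i)-e_i}{i\in[d]}$. As this set has $\eta_0=d$ elements, the map $e_i\mapsto\phi(e_i)-e_i$ is a bijection onto $V(F,0)$; if some $\phi(e_i)$ were $\0$, the point $\phi(e_i)-e_i=-e_i$ would lie in $V(F,0)$, contradicting $\langle\1,-e_i\rangle=-1\ne 0$. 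Hence every vertex $e_i$ of $F$ is good, $\opp(F,e_i)=\phi(e_i)-e_i\in V(F,0)$, and the points $\opp(F,e_1),\dots,\opp(F,e_d)$, being the $d$ distinct elements of $V(F,0)$, are pairwise distinct.

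Next I would fix an arbitrary $x\in V(F,-1)$ and write $x=\sum_{i=1}^d x_i e_i$. Because $\Vert{F}$ is a lattice basis, each $x_i=\langle u_{F,e_i},x\rangle$ is an integer, and $\sum_i x_i=\langle u_F,x\rangle=-1$. By the second part of \autoref{lemma:2}, $\langle u_F,x\rangle-1\le\langle u_{F,e_i},x\rangle$, that is $x_i\ge -2$, with equality only for $x=\opp(F,e_i)$; since $\opp(F,e_i)\in V(F,0)$ while $x\in V(F,-1)$, equality cannot occur, so $x_i\ge -1$ for every $i$.

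Finally I would feed this into \autoref{lem:phi:notwonegative}: for $i\ne j$ the vertices $e_i,e_j$ of $F$ are good with $\opp(F,e_i)\ne\opp(F,e_j)$, so no vertex of $V(F,-1)$ can have both its $i$-th and $j$-th coordinate equal to $-1$. Hence at most one coordinate of our $x$ equals $-1$. If none did, each $x_i$ would be a nonnegative integer, contradicting $\sum_i x_i=-1$; so exactly one coordinate, say $x_j$, equals $-1$, and then $\sum_{i\ne j}x_i=0$ with all summands $\ge 0$ forces $x_i=0$ for $i\ne j$. Therefore $x=-e_j=-v$ for the vertex $v=e_j$ of $F$, which is the assertion.

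The argument is essentially an assembly of earlier results, so there is no isolated hard computation; the point to be careful about is that the hypothesis $\eta_0=d$ enters twice and both uses are essential — once through \autoref{lem:d_vertices_at_0} to make $\Vert{F}$ a lattice basis (so that coordinates are integral and $F$ can be put in standard form) and once to force \emph{every} vertex of $F$ to be good, which is precisely what lets \autoref{lem:phi:notwonegative} exclude coordinate patterns such as $(1,-1,-1,0,\dots,0)$ that a level-$(-1)$ vertex could otherwise exhibit.
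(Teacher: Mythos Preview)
Your proof is correct. The paper does not supply its own proof of this lemma---it is quoted from \Obro~\cite[Lem.~7]{Obro08}---so there is no in-paper argument to compare against; your derivation is the natural one from the surrounding lemmas, and in fact mirrors how the analogous step is handled inside the proof of \autoref{prop:classificationOf0and-1}\eqref{prop:classificationOf0and-1:1}. One minor remark: you don't need to argue separately that $\phi(e_i)\ne\0$, since the paper already records (just before the Example following the definition of $\phi$) that $\phi(v)\ne\0$ forces $v$ to be good, and \autoref{lem:d_vertices_at_0} gives $\opp(F,e_i)=\phi(e_i)-e_i\in V(F,0)$ directly; but your version is equally valid.
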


We summarize all considerations in the following Proposition.  To simplify the notation we let
$\opp(F):=\smallSetOf{\opp(F,v)}{v\in\Vert{F}}$ for any facet $F$ of the polytope~$P$.  Further, we
we abbreviate ``pairwise distinct'' as ``p.d.''.

\begin{proposition}\label{prop:classificationOf0and-1}
  Let $P$ be a $d$-dimensional \str polytope such that $F$ is a special facet.
  \begin{enumerate}
  \item \label{prop:classificationOf0and-1:1} If $\eta^F_0=d$, then, up to lattice equivalence,
    $F=\conv\{e_1,e_2, \ldots, e_d\}$, and
    \begin{align*}
      V(F,0)\ &=\ \{\phi(e_1)-e_1,\, \phi(e_2)-e_2,\, \ldots,\, \phi(e_d)-e_d\}\\
      V(F,-1)\ &\subseteq\ \{-e_1,\, -e_2,\, \ldots,\, -e_d\}\,.
    \end{align*}
  \item \label{prop:classificationOf0and-1:2} If $\eta^F_0=d-1$ and $\opp(F) = V(F,0)$, then,
    up to lattice equivalence, $F=\conv\{e_1, e_2, \ldots, e_d\}$, and
    \begin{align*}
      V(F,0)\ &=\ \{-e_1-e_2+e_a+e_b, \, \phi(e_3)-e_3,\,  \ldots,\, \phi(e_d)-e_d\}\\
      V(F,-1)\ &\subseteq\ \{-e_1,\, -e_2,\, \ldots,\, -e_d\} \;\cup\; \SetOf{-e_1-e_2 + e_s}{s\in [d]}
    \end{align*}
    for $a, b \in [d] \setminus\{ 1,2 \}$ not necessarily distinct.
  \item \label{prop:classificationOf0and-1:3} If $\eta^F_0=d-1$ and $\opp(F)\ne
    V(F,0)$, then, up to lattice equivalence, $F=\conv\{e_1,e_2, \ldots, e_d\}$,
    and
    \begin{align*}
      V(F,0)\ &=\ \{\phi(e_2)-e_2,\, \phi(e_3)-e_3,\, \ldots,\, \phi(e_d)-e_d\}\\
      V(F,-1)\ &\subseteq\ \{-e_1,\, -e_2,\, \ldots,\, -e_d\} \;\cup\; \SetOf{-2e_1-e_r +e_s+e_t}{r,s,t \in [d]
        \text{ p.d.},\, r\ne 1}\,. % " " before "p.d." on purpose
    \end{align*}
  \end{enumerate}
\end{proposition}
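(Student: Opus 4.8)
The plan is to derive part~\eqref{prop:classificationOf0and-1:1} directly from results already available: when $\eta^F_0=d$, \autoref{lem:d_vertices_at_0} already gives that the vertices of $F$ form a lattice basis — so after a transformation in $\GL{d}{\ZZ}$ we may take $F=\conv\{e_1,\dots,e_d\}$ — and that $V(F,0)=\SetOf{\phi(e_i)-e_i}{i\in[d]}$, while \autoref{lem:vertices_at_minus1} gives $V(F,-1)\subseteq\{-e_1,\dots,-e_d\}$; so part~\eqref{prop:classificationOf0and-1:1} needs no further argument. Parts~\eqref{prop:classificationOf0and-1:2} and~\eqref{prop:classificationOf0and-1:3} I would handle in parallel: first normalize coordinates, then identify $V(F,0)$, and finally $V(F,-1)$.

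So assume $\eta^F_0=d-1$. I would first invoke \autoref{lem:eta_0_d-1_smooth}, which says the vertices of $F$ form a lattice basis, so again $F=\conv\{e_1,\dots,e_d\}$ after a change of coordinates; then $u_{F,e_i}=e_i$ and the $\Vert F$-coordinates of a lattice point are its standard ones. By \autoref{cor:opposite_at_0} we have $V(F,0)\subseteq\opp(F)$, and since $|V(F,0)|=d-1$ and $|\opp(F)|\le d$, the dichotomy drawn in the proof of \autoref{lem:eta_0_d-1_smooth} applies. Either two vertices of $F$, say $e_1,e_2$ after relabelling, satisfy $\opp(F,e_1)=\opp(F,e_2)=:z\in V(F,0)$ while the other opposite vertices are distinct and fill up $V(F,0)\setminus\{z\}$ — this is case~\eqref{prop:classificationOf0and-1:2}; or exactly one vertex, say $e_1$, has $\opp(F,e_1)\notin V(F,0)$, hence at level $\le-1$, while $\opp(F,e_2),\dots,\opp(F,e_d)$ are distinct and exhaust $V(F,0)$ — this is case~\eqref{prop:classificationOf0and-1:3}. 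In case~\eqref{prop:classificationOf0and-1:2} the computation inside the proof of \autoref{lem:eta_0_d-1_smooth} shows that the $e_1$- and $e_2$-coordinates of $z$ both equal $-1$ and the remaining coordinates are nonnegative integers summing to $2$; hence $z=-e_1-e_2+e_a+e_b$ with $a,b\in[d]\setminus\{1,2\}$, possibly equal, and \autoref{lem:one_vertex_at_0} gives $\phi(e_j)\ne\0$ for $3\le j\le d$, so $V(F,0)$ has the claimed form. In case~\eqref{prop:classificationOf0and-1:3}, \autoref{lem:one_vertex_at_0} applied to each $e_j$ with $j\ge2$ gives $\phi(e_j)\ne\0$, so $V(F,0)=\SetOf{\phi(e_j)-e_j}{2\le j\le d}$.

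Now fix $x\in V(F,-1)$, so $x\in\ZZ^d$ and $\sum_i x_i=-1$. For every $i$ whose opposite vertex lies at level $0$ — all $i$ in case~\eqref{prop:classificationOf0and-1:2}, all $i\ge2$ in case~\eqref{prop:classificationOf0and-1:3} — \autoref{lemma:2} gives $x_i=\langle u_{F,e_i},x\rangle\ge\langle u_F,x\rangle-1=-2$ with equality only for $x=\opp(F,e_i)$; as the latter lies in $V(F,0)$ and $x$ does not, in fact $x_i\ge-1$. Among such vertices with distinct opposite vertices \autoref{lem:phi:notwonegative} moreover forbids two coordinates of $x$ to equal $-1$ simultaneously. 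In case~\eqref{prop:classificationOf0and-1:2} only the pair $(x_1,x_2)$ can reach $-1$ at the same time (they share the opposite vertex $z$), and together with $\sum_i x_i=-1$ and nonnegativity of the other coordinates a short case split gives $x\in\{-e_1,\dots,-e_d\}\cup\SetOf{-e_1-e_2+e_s}{s\in[d]}$. In case~\eqref{prop:classificationOf0and-1:3}, if $x_1\ge-1$ the same arithmetic forces $x=-e_i$; if $x_1=-2$ then the equality clause of \autoref{lemma:2} with $i=1$ forces $x=\opp(F,e_1)$, and \autoref{lem:coordinates_at_minus1} with $i=1$ shows that no vertex of $V(F,-1)$ other than $\opp(F,e_1)$ can have first coordinate $\le-2$. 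So it only remains to show that $\opp(F,e_1)$, when it does lie at level $-1$ and is not already equal to $-e_1$, has the form $-2e_1-e_r+e_s+e_t$ with $r,s,t\in[d]$ pairwise distinct and $r\ne1$.

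This last step is the main obstacle. Writing $z':=\opp(F,e_1)$ and feeding $x=z'$ into the bounds just derived gives $z'_i\ge-1$ for $i\ge2$ with at most one of these coordinates equal to $-1$, together with $\sum_i z'_i=-1$; one checks from this that $z'_1\in\{-2,-1\}$ and that the only remaining possibilities for $z'$ are $-2e_1+e_s$, $\,-2e_1-e_r+2e_s$, $\,-e_1-e_r+e_s$, and the desired $-2e_1-e_r+e_s+e_t$. To rule out the first three I would, in each case, determine the standard normal $u_G$ of the facet $G:=\neigh(F,e_1)=\conv(\{e_2,\dots,e_d\}\cup\{z'\})$ from the equations $\langle u_G,e_j\rangle=1$ for $j\ge2$ and $\langle u_G,z'\rangle=1$, and then exploit reflexivity — the inequalities $\langle u_G,w\rangle\le1$ for the vertices $w$ already located in $V(F,0)$ and $V(F,-1)$ — together with terminality, in the form of \autoref{lemma:nill_sumDistant} and the observation that a lattice point such as $e_1+z'$ can be neither in $\partial P$ (where it would be an inadmissible vertex) nor in the interior (only $\0$ is) — to reach a contradiction. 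This case bookkeeping, carried through both sub-cases, is where the real work lies; everything else is a direct application of the lemmas of this section plus elementary arithmetic.
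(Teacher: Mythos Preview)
Your treatment of parts~\eqref{prop:classificationOf0and-1:1} and~\eqref{prop:classificationOf0and-1:2} matches the paper's, and your setup for~\eqref{prop:classificationOf0and-1:3} is fine up to the point where you analyse $V(F,-1)$. There the argument breaks.

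First, the claim ``if $x_1\ge-1$ the same arithmetic forces $x=-e_i$'' is false. In case~\eqref{prop:classificationOf0and-1:2} the only pair that could simultaneously reach $-1$ was $(e_1,e_2)$ because they shared an opposite vertex; in case~\eqref{prop:classificationOf0and-1:3} the vertex $e_1$ is simply not good, so \autoref{lem:phi:notwonegative} says nothing about pairs $(e_1,e_j)$ with $j\ge2$. Concretely, $x_1=-1$ together with a single $x_j=-1$ for some $j\ge2$ yields $x=-e_1-e_j+e_k$, and nothing you have derived excludes this for a \emph{general} $x\in V(F,-1)$. Your own last paragraph tacitly admits the possibility for $z'=\opp(F,e_1)$, which is inconsistent with the earlier claim.

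Second --- and this is the source of the confusion --- the point $-e_1-e_r+e_s$ actually \emph{belongs} to the target set $\smallSetOf{-2e_1-e_r+e_s+e_t}{r,s,t\ \text{p.d.},\ r\ne 1}$: just take $t=1$ (the statement forbids $r=1$ but not $s=1$ or $t=1$). So your plan to ``rule out'' this shape is doomed; indeed $e_1+z'=-e_r+e_s$ can be the vertex $\phi(e_r)-e_r$, so the distant--vertex argument you sketch will not go through. Such $z'$ genuinely occur (compare the second case in the proof of \autoref{prop:dd-1d-21}).

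The paper avoids all this by treating every $y\in V(F,-1)$ uniformly: the bound $y_1\ge-2$ from \autoref{lemma:2} holds for \emph{all} $y$, so one writes $y=-2e_1-e_r+e_s+e_t$ with as yet unconstrained $r,s,t\in[d]$ and then removes the degenerate coincidences $s=t$ and $r\in\{s,t\}$ by one-line midpoint arguments --- e.g.\ $s=t$ gives $-e_1+e_s=\tfrac12(y+e_r)$, a non-vertex lattice point in $P$, contradicting terminality. No facet normals of $\neigh(F,e_1)$ are needed, and no separate treatment of $\opp(F,e_1)$ is required.
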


\begin{proof}
  Assume first that $\eta^F_0=d$. Then \autoref{lem:d_vertices_at_0} implies that the vertices of
  $F$ form a lattice basis, so up to a lattice transformation we can assume that $F=\conv\{e_1,
  e_2,\ldots, e_d\}$. The same lemma gives $V(F,0) = \{\phi(e_1)-e_1, \ldots, \phi(e_d)-e_d\}$.  By
  \autoref{lem:vertices_at_minus1} now $V(F,-1)\subseteq\{-e_1, -e_2, \ldots, -e_d\}$. This proves
  \eqref{prop:classificationOf0and-1:1}.

  From now on we assume that $\eta^F_0=d-1$. We look first at the case $\opp(F) = V(F,0)$.  By
  \autoref{lem:eta_0_d-1_smooth} we know that the vertices of $F$ form a lattice basis, so, up to a
  lattice transformation, $F=\conv\{e_1,e_2, \ldots, e_d\}$. Further, the function $\phi$ does not
  vanish for at least $d-2$ vertices in $F$.  Since any vertex in $V(F,0)$ is opposite to a vertex
  of $F$ we know that there is a vertex $x\in V(F,0)$ such that $x:=\opp(F,v)=\opp(F,w)$ for
  precisely two vertices $v,w$ of $F$. Hence, up to relabeling,
  \begin{align*}
    V(F,0)\ =\ \{\phi(e_3)-e_3, \ldots, \phi(e_d)-e_d\} \;\cup\; \{x\}
  \end{align*}
  Let $x=(x_1, \dots, x_d)$. By \autoref{lemma:2} and
  \autoref{prop:nill:5.5}.(\ref{prop:nill:5.5:2}) we have $-1\le x_1, x_2<0$ and $x_j\ge 0$ for
  $j\ge 3$.  $x\in\ZZ^d$ implies $x_1=x_2=-1$. And since $\sum_{i=1}^d x_i = 0$ we get that there
  exist not necessarily distinct indices $a,b \in [d]\setminus\{ 1,2 \}$ with $x=-e_1-e_2
  +e_a+e_b$. Now we take a look at an arbitrary $y\in V(F,-1)$. Because of
  \autoref{lem:phi:notwonegative} we know that for two distinct indices $i$ and $j$ it is not
  possible that $y_i = y_j = -1$ except for $i=1$ and $j=2$ (or vice versa). And since every vertex
  of $F$ is good \autoref{lemma:2} gives us that $y_i \ge -1$ for all $i \in [d]$.  Putting this
  together with the fact that $\sum_{i=1}^d y_i = -1$ leaves us with either $y \in \{ -e_1,
  -e_2,\ldots ,-e_d \}$ or $y=-e_1-e_2+e_s$ for some $s\in [d]$. This proves
  \eqref{prop:classificationOf0and-1:2}.

  So finally assume that there is some vertex $v$ in $F$ with $\opp(F,v) \not\in V(F,0)$. Up to
  relabeling, $v=e_1$. Then
  \[
  V(F,0)\ =\ \{\phi(e_2)-e_2, \ldots, \phi(e_d)-e_d\}\,.
  \]
  because of the same argument as above. That is, every vertex in $V(F,0)$ must be opposite to a
  unique vertex in $F$.  Hence, by \autoref{lem:one_vertex_at_0} we know the entire set $V(F,0)$.
  Now we look at $y\in V(F,-1)$.  Because of \autoref{lem:phi:notwonegative} we know that for two
  distinct indices $i,j \in [d] \setminus \{ 1 \}$ it is not possible that $y_i = y_j = -1$, since
  all vertices are good except $e_1$. Now let $G = \neigh(F,e_1)$. \autoref{lemma:2} now implies
  several things. Firstly, we get $y_i \ge -1$ for $i \in [d] \setminus \{ 1 \}$. Together with
  $\langle u_G, y \rangle \le 1$ we get
  \[
  2 \ \ge \ (\langle u_G, e_1 \rangle -1) x_1 \quad \text{if and only if} \quad x_1 \ \ge \
  \frac{2}{\langle u_G,e_1 \rangle -1}\, .
  \]
  This implies $x_1  \ge -2$ as $\langle u_G, e_1 \rangle \le 0$ holds.
  So we are left with $y\in \{ -e_1,-e_2,\ldots, -e_d \}$ or $y=-2e_1-e_r +e_s+e_t$ for some
  $r,s,t$. Now we want to show that $r,s,t$ are pairwise distinct and $r\ne 1$. We look at all the different cases.
  \begin{itemize}
    \item $r=s=t=1$ would imply $y = -e_1$, a case which is already covered, or
    \item $s=t$ would imply $y = -2e_1 -e_r + 2e_s$, which is impossible since $P$ is terminal and $-e_1+e_s \in \conv\{ y, e_r \}$, or
    \item $r=t$ (the case $r=s$ is similar) would imply $y = -2e_1 + e_s$, which is impossible since $P$ is terminal and $-e_1+e_s \in \conv\{ y, e_s \}$.
  \end{itemize}
  So $r,s,t$ are pairwise distinct. From $x_1 \ge -2$ we get that $r\ne 1$. This proves \eqref{prop:classificationOf0and-1:3}.
\end{proof}

\section{Smooth Fano  \texorpdfstring{$d$}{d}-polytopes with \texorpdfstring{$3d-2$}{3d-2} vertices}

Throughout this section we assume that $P\subset\RR^d$ is a $d$-dimensional \str polytope with
precisely $3d-2$ vertices.  It is a consequence of our main result, which we will prove below, that
all such polytopes turn out to be smooth Fano.  Let $F$ be a special facet $F$ of $P$, that is, the
vertex sum $v_P$ is contained in $\pos(F)$.  We explore the possible shapes of the vector
$\eta=\eta^F$.  As in \Obro~\cite[\S5]{Obro08} it is useful to expand the expression $\langle
u_F,v_P \rangle$.  Since $\eta_1=d$ we obtain
\begin{equation}\label{eq:sum}
  0 \ \le \ \langle u_F,v_P \rangle \ = \ d + \sum_{k\le -1} k \cdot \eta_k \, .
\end{equation}
Hence, in particular, $\eta_{-1}+\eta_{-2}+\dots\le d$.  By \autoref{cor:opposite_at_0} any vertex
at level zero with respect to $F$ is opposite to a vertex in $F$, and the total number of vertices
is $3d-2$.  So we have
\begin{align}
  d-2 \ &\le \ \eta_0 \ \le d \,,\label{eq:H0}\\
  d-2 \ &\le \ \sum_{k\le -1} \eta_k \ \le \ d \, .\label{eq:Hbelow0}
\end{align}
The linear restrictions \eqref{eq:sum}, \eqref{eq:H0}, and \eqref{eq:Hbelow0} now leave us with
finitely many choices for $\eta$.  The resulting six admissible $\eta$-vectors are displayed in
Table~\ref{tab:eta}.  In particular, $\eta_k=0$ for $k\le-4$.  Notice that in
\autoref{prop:novertexat-3} below we will show that the $\eta$-vector $(d,d,d-3,0,1)$ for $v_P=\0$
does not occur. The case distinction by the eccentricity $\ecc(P)$ of the polytope $P$ is the guiding principle for the proof of our main result.

\begin{table}[tb]
  \renewcommand{\arraystretch}{0.9}
  \begin{tabular*}{.75\linewidth}{@{\extracolsep{\fill}}rccccccc@{}}\toprule
      $\ecc(P)$   & $2$   & $1$   & $1$   & $0$   & $0$   & $0$   & $0$   \\
     \midrule
      $\eta_1$    & $d$   & $d$   & $d$   & $d$   & $d$   & $d$   & $d$   \\
      $\eta_0$    & $d$   & $d$   & $d-1$ & $d$   & $d$   & $d-1$ & $d-2$ \\
      $\eta_{-1}$ & $d-2$ & $d-3$ & $d-1$ & $d-3$ & $d-4$ & $d-2$ & $d$   \\
      $\eta_{-2}$ & $0$   & $1$   & $0$   & $0$   & $2$   & $1$   & $0$   \\
      $\eta_{-3}$ & $0$   & $0$   & $0$   & $1$   & $0$   & $0$   & $0$   \\
     \bottomrule\\
  \end{tabular*}

  \caption{List of possible $\eta$-vectors of \str $d$-polytopes with $3d-2$ vertices, where $\ecc(P)$
    denotes the eccentricity of $P$.}
  \label{tab:eta}
\end{table}

\begin{example}
  Let us consider the planar case $d=2$ .  The classification in \autoref{fig:2dmin_Fano} shows
  that there are two \str polygons with $3\cdot 2-2=4$ vertices.  The
  vertex sum of $P_{4a}$ is zero, and the vertex sum of $P_{4b}$ equals $e_1$.  Taking
  $\conv\{e_1,e_2\}$ as a special facet in both cases the $\eta$-vector for that facet of $P_{4a}$
  reads $(2,0,2)$, corresponding to the last column of Table~\ref{tab:eta}.  For $P_{4b}$ the
  $\eta$-vector of that facet reads $(2,1,1)$, corresponding to the third column.
\end{example}

Let $P$ be a \str $d$-polytope with $3d-1$ vertices, and let $Q$ be an \str $e$-polytope with
$3e-1$ vertices, as in \Obro's classification \autoref{thm:3d-1}.  Then $P\oplus Q$ is a \str
polytope of dimension $d+e$ with $3(d+e)-2$ vertices.  Clearly, the vertex sum is $v_{P\oplus
  Q}=v_P+v_Q$.  If $F$ is a special facet for $P$ and $G$ is a special facet for $Q$ then the joint
convex hull $\conv(F\cup G)$ in $\RR^{d+e}$ is a special facet for $P\oplus Q$.  A direct
computation gives
\[
\eta^{\conv(F\cup G)}_\ell \ = \ \eta^F_\ell+\eta^G_\ell \, .
\]
It turns out that the only polytopes in dimension $\ge 4$ in our classification which are not of
this type are $\HSBC{4}$ and its direct sums with copies of the del Pezzo hexagon $\HSBC{2}\cong
P_6$.

\subsection{Polytopes of Eccentricity \texorpdfstring{$2$}{2}}

We consider the situation where $v_P\in H(F,2)$ for any special facet $F$.  According to
\autoref{tab:eta} we have $\eta=(d,d,d-2)$.

\begin{proposition}\label{prop:level2}
  Let $P$ be a $d$-dimensional \str $d$-polytope with exactly $3d-2$ vertices, where $d\ge 4$.

  If $\ecc(P)=2$ then the polytope $P$ is lattice equivalent to either a
  skew bipyramid over a $(d{-}1)$-dimensional smooth Fano polytope with $3d-4$ vertices, or to the
  direct sum $P_5^{\oplus 2} \oplus P_6^{\oplus \frac{d}{2}-2}$.  In the latter case $d$ is
  necessarily even.
\end{proposition}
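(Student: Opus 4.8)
The plan is to exploit the very rigid structure forced by the $\eta$-vector $(d,d,d-2)$ of every special facet (the $\ecc(P)=2$ column of Table~\ref{tab:eta}), using \autoref{prop:classificationOf0and-1}.(\ref{prop:classificationOf0and-1:1}) and \autoref{lem:phi}. By that proposition we may take $F=\conv\{e_1,\dots,e_d\}$ with $V(F,0)=\SetOf{\phi(e_i)-e_i}{i\in[d]}$ and $V(F,-1)\subseteq\{-e_1,\dots,-e_d\}$; since $\eta^F_{-1}=d-2$, exactly $d-2$ of the $-e_i$ are vertices, so I set $S:=\SetOf{i}{-e_i\in\Vert{P}}$ and $T:=[d]\setminus S$, whence $|T|=2$. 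Writing $\phi(e_i)=e_{\sigma(i)}$ (note $\phi(e_i)\ne\0$, for otherwise the level-$(-1)$ point $-e_i$ would lie in $V(F,0)$, and $\sigma$ is fixed-point-free since $\opp(F,e_i)\ne\0$), we have $v_P=\sum_i e_{\sigma(i)}-\sum_{i\in S}e_i$; as $F$ is special this lies in $\pos F$, which forces $S\subseteq\image\sigma$. Finally \autoref{lem:phi} applied to $v=e_i$ for $i\in S$ (where $-e_i$ is a vertex) gives $\sigma^2(i)=i$.

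Now I would split according to whether $\sigma$ is surjective. Suppose first it is \emph{not}: pick $k\in T\setminus\image\sigma$. Then the only vertices of $P$ outside the coordinate hyperplane $\{x_k=0\}$ are $e_k$ and $e_{\sigma(k)}-e_k$ (the $-e_i$, $i\in S$, have $x_k=0$ as $k\notin S$; and $e_{\sigma(i)}-e_i$ has $x_k\ne0$ only for $i=k$), lying at levels $1$ and $-1$. Setting $Q:=P\cap\{x_k=0\}$ we get $P=\conv(\{e_k,e_{\sigma(k)}-e_k\}\cup Q)$, and the segment joining the two distinguished vertices meets $Q$ in $\tfrac12 e_{\sigma(k)}$, a relative-interior point of $Q$ since $\0\in\mathrm{relint} Q$ and $e_{\sigma(k)}\in Q$. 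Hence $P$ is a skew bipyramid over $Q$ in the sense of the discussion preceding \autoref{lem:skew-bipyramid}, and $Q$ is a $(d{-}1)$-dimensional \str polytope with $3d-4=3(d{-}1)-1$ vertices; by \autoref{thm:3d-1} it is lattice equivalent to one of the polytopes classified there, all of which are smooth Fano.

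Suppose instead $\sigma$ is surjective, hence a fixed-point-free permutation of $[d]$. The fixed-point set $G$ of $\sigma^2$ contains $S$ and is $\sigma$-invariant, so $\sigma$ restricts to a fixed-point-free permutation of $[d]\setminus G$, a set of size at most $2$; such a set cannot be a singleton, and on a pair $\sigma$ would be a transposition, whose square is the identity --- contradiction. So $G=[d]$: $\sigma$ is a fixed-point-free involution and $d$ is even. The vertices of $P$ supported on a $\sigma$-pair $\{i,j\}$ are exactly $e_i$, $e_j$, $\pm(e_i-e_j)$, plus $-e_i$ (resp.\ $-e_j$) when $i\in S$ (resp.\ $j\in S$), so $P$ is the direct sum over the $d/2$ pairs of these planar blocks. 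By \autoref{lemma:Fano+Fano=Fano} each block is a $2$-dimensional terminal reflexive lattice polytope with $4+|\{i,j\}\cap S|$ vertices, so by the classification in Figure~\ref{fig:2dmin_Fano} it is $P_6$ if both indices lie in $S$, $P_5$ if exactly one does, and (when $\{i,j\}=T$) a quadrilateral whose only antipodal pair of vertices is $\pm(e_i-e_j)$ and whose vertex sum is nonzero, hence $P_{4b}$. Therefore, if the two elements of $T$ lie in distinct pairs then $P\cong P_5^{\oplus2}\oplus P_6^{\oplus(\frac{d}{2}-2)}$ (so $d$ is even, as claimed), while if they form one pair then $P\cong P_{4b}\oplus P_6^{\oplus\frac{d-2}{2}}$; in the latter case $P_{4b}$ is itself a skew bipyramid over $[-1,1]$, so $P$ is a skew bipyramid over $[-1,1]\oplus P_6^{\oplus\frac{d-2}{2}}$, a $(d{-}1)$-dimensional smooth Fano polytope with $3(d{-}1)-1$ vertices.

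Case~1 is essentially the skew-bipyramid mechanism already set up before \autoref{lem:skew-bipyramid}, so I expect it to be routine. The main work --- and the most error-prone part --- is Case~2: upgrading $\sigma^2=\mathrm{id}$ from $S$ to all of $[d]$ once $\sigma$ is onto, identifying each planar block correctly (in particular distinguishing $P_{4b}$ from $P_{4a}$), and checking that the two sub-cases line up exactly with the two alternatives in the statement.
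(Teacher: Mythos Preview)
Your overall architecture matches the paper's proof: you invoke \autoref{prop:classificationOf0and-1}.(\ref{prop:classificationOf0and-1:1}) and \autoref{lem:phi}, then split on surjectivity of $\sigma$, and in the surjective case upgrade $\sigma$ to a fixed-point-free involution. Your argument that $\sigma^2=\mathrm{id}$ on all of $[d]$ via the $\sigma$-invariant fixed set $G\supseteq S$ is a clean variant of the paper's reasoning.

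There is, however, a genuine error in the sub-case where the two elements of $T$ form a single $\sigma$-pair $\{i,j\}$. You assert that the planar block $\conv\{e_i,\,e_j,\,e_i-e_j,\,e_j-e_i\}$ is lattice equivalent to $P_{4b}$. It is not: the origin lies on the edge joining $e_i-e_j$ and $e_j-e_i$ (the line $x_i+x_j=0$ supports the quadrilateral and passes through $\0$), so this quadrilateral is \emph{not} reflexive and therefore is not in the list of Figure~\ref{fig:2dmin_Fano} at all. Your appeal to ``the classification in Figure~\ref{fig:2dmin_Fano}'' is circular here, because that classification only covers terminal \emph{reflexive} polygons; matching the features ``one antipodal pair, nonzero vertex sum'' does not pin down $P_{4b}$ unless reflexivity is already known for the block in question.

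The fix is immediate and is exactly what the paper does: by \autoref{lemma:Fano+Fano=Fano}, a non-reflexive summand would force $P$ itself to be non-reflexive, contradicting the hypothesis. Hence the sub-case ``$T$ is a $\sigma$-pair'' simply does not occur, and in the surjective case the two elements of $T$ must lie in distinct $\sigma$-pairs, giving $P\cong P_5^{\oplus 2}\oplus P_6^{\oplus(\frac{d}{2}-2)}$ only. (As a minor aside, your parenthetical justification for $\phi(e_i)\ne\0$ is garbled; the correct reason is simply that $\eta^F_0=d$ forces all $d$ opposite vertices into $V(F,0)$ in the form $\phi(e_i)-e_i$, as given by \autoref{prop:classificationOf0and-1}.(\ref{prop:classificationOf0and-1:1}).)
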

Notice that $3d-4=(3d-2)-2=3(d-1)-1$, and therefore the possible bases of the bipyramids are
explicitly known from \autoref{thm:3d-1}.
\begin{proof}
  Let $F$ be a special facet and $\phi:=\phi^F$.  Since $\eta_0=d$
  \autoref{prop:classificationOf0and-1} shows that $\Vert F$ is a lattice basis, and up to a
  unimodular transformation we may assume that $F=\conv\{e_1,e_2,\dots,e_d\}$ and
  \begin{align*}
    V(F,0) \ &= \ \SetOf{\phi(e_i)-e_i}{i\in[d]} \\
    V(F,-1) \ &= \ \{-e_3,\, -e_4,\, \ldots,\, -e_d\} \, .
  \end{align*}
  From \autoref{lem:phi} we know that $\phi(\phi(e_i))=e_i$ for all $i>2$.  We distinguish whether
  or not $e_1$ or $e_2$ occur in the image of $\phi$.

  Assume first that $|\{e_1,e_2\}\cap\image\phi|\le 1$.  Then, up to symmetry,
  $e_1\not\in\image\phi$.  Browsing our vertex lists above we see that the first coordinate is $0$
  for all vertices except $e_1$ and $\phi(e_1)-e_1$.  So $P$ is a skew bipyramid with apices $e_1$
  and $\phi(e_1)-e_1$ over some $d-1$ dimensional polytope $Q$.  By \autoref{lem:skew-bipyramid} it
  is a \str polytope with $3d-4$ vertices, and a smooth Fano polytope by \autoref{thm:3d-1}.

  It remains to consider the case $\{e_1, e_2\}\subset\image\phi$. If $\phi(e_r)=e_1$ for some $r\ne
  1,2$, then $\phi(\phi(e_r))=e_r$ by \autoref{lem:phi}. Thus, $\phi(e_1)=e_r$ and similarly for
  $e_2$. So either $\phi(e_1)=e_2$ and $\phi(e_2)=e_1$, or there are distinct $r, s\not\in\{ 1,2 \}$
  with $\phi(e_r)=e_1$ and $\phi(e_s)=e_2$.

  Assume first that $\phi(e_1)=e_2$ and $\phi(e_2)=e_1$.  Then $\phi(e_i)\not\in\{e_1,e_2\}$ for all
  $i\ge 3$. The two-dimensional subspace spanned by $e_1$ and $e_2$ contains the four vertices $e_1,
  e_2, \pm(e_1-e_2)$ of $P$, while the remaining $3d-6=3(d-2)$ vertices of $P$ are contained in the
  subspace spanned by $e_3,e_4,\dots,e_d$.  Hence, $P$ decomposes into a direct sum of a quadrangle
  and a $(d{-}2)$-dimensional polytope.  But the quadrangle $\conv\{e_1, e_2, \pm(e_1-e_2)\}$ has
  $\0$ in the boundary, so $P$ is not reflexive.

  Hence, there exist distinct $r, s\not\in\{ 1,2 \}$ with $\phi(e_r)=e_1$, $\phi(e_s)=e_2$,
  $\phi(e_1)=e_r$, and $\phi(e_2)=e_s$.  We conclude that $\phi$ is an involutory fixed-point free
  bijection on the set $\Vert{F}$.  Therefore, $d$ must be even.  Each linear subspace $\lin\{ e_i,
  \phi(e_i) \}$ intersects $P$ in a subspace containing $5$ or $6$ vertices, and all other vertices
  are in a complementary subspace. Thus, $P$ splits into
  \begin{align*}
    P\ =\ P_5^{\oplus 2}\oplus P_6^{\oplus (\frac{d}{2} - 2)}\,,
  \end{align*}
  where the two copies of $P_5$ are contained in $\lin\{ e_1, e_r \}$ and $\lin \{e_2, e_s \}$.
\end{proof}

\begin{example}
  The $3$-polytope $A$ from \autoref{example:skewbip-P5} satisfies the conditions stated in
  \autoref{prop:level2}.  The dimension is odd, and $A$ is a skew bipyramid over the lattice
  pentagon~$P_5$.
\end{example}

\subsection{Polytopes with Eccentricity \texorpdfstring{$1$}{1}}

If the vertex sum $v_P$ lies on level one with respect to the special facet $F$, then $v_P$ is a
vertex of $F$, as $P$ is terminal.  By \autoref{tab:eta} we either have $\eta=(d,d,d-3,1)$ or
$\eta=(d,d-1,d-1)$.  The first case is the easier one; so we will treat it right away.

\begin{proposition}\label{prop:noLevelm2}
  Let $P$ be a $d$-dimensional \str polytope with exactly $3d-2$ vertices,
  where $d\ge 4$.

  If the vertex sum $v_P$ is a vertex on a special facet $F$ with $\eta^F=(d,d,d-3,1)$, then $P$ is
  lattice equivalent to a skew bipyramid over a $(d{-}1)$-dimensional smooth Fano polytope with
  $3d-4$ vertices.
\end{proposition}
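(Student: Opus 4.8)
The plan is to normalise the special facet $F$ via \autoref{prop:classificationOf0and-1}, to single out one coordinate direction along which $P$ turns out to be a skew bipyramid, and then to invoke \autoref{lem:skew-bipyramid} together with \autoref{thm:3d-1}; note that $3d-4=3(d-1)-1$, so the base of the bipyramid is automatically one of the polytopes classified there, hence smooth Fano. Since $\eta^F_0=d$, \autoref{prop:classificationOf0and-1}.(\ref{prop:classificationOf0and-1:1}) lets us assume $F=\conv\{e_1,\dots,e_d\}$, $V(F,0)=\{\phi(e_i)-e_i\mid i\in[d]\}$ and $V(F,-1)\subseteq\{-e_1,\dots,-e_d\}$, where $\phi=\phi^F$. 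As $\eta_0=d=|V(F,0)|$, no value $\phi(e_i)$ is $\0$; hence every vertex of $F$ is good, the $d$ points $\opp(F,e_i)=\phi(e_i)-e_i$ are pairwise distinct, and $\phi$ has no fixed point. Set $S:=\{\,i\mid -e_i\in V(F,-1)\,\}$ and $T:=[d]\setminus S$, so $|S|=\eta_{-1}=d-3$, $|T|=3$, and by \autoref{lem:phi} we have $\phi(\phi(e_i))=e_i$ for all $i\in S$; finally let $y$ be the unique vertex of $V(F,-2)$.

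It suffices to produce an index $\ell$ with $e_\ell\notin\image\phi$ and $y_\ell=0$. Any such $\ell$ must lie in $T$: if $\ell\in S$, then $-e_\ell$ is a vertex, and \autoref{lem:phi} gives $\phi(\phi(e_\ell))=e_\ell\in\image\phi$, a contradiction. Given $\ell$, in the $\ell$-th coordinate every vertex of $P$ other than $e_\ell$ and $\phi(e_\ell)-e_\ell$ vanishes: a vertex of $V(F,0)$ is some $\phi(e_i)-e_i$, whose $\ell$-th coordinate is $-1$ if $i=\ell$, is $+1$ if $\phi(e_i)=e_\ell$, and is $0$ otherwise, so it is $0$ for every $i\ne\ell$ since $\ell\notin\image\phi$; a vertex of $V(F,-1)$ is $-e_i$ with $i\in S$, hence $i\ne\ell$; and $y_\ell=0$ by choice. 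Writing $\phi(e_\ell)=e_k$ (with $k\ne\ell$), the vertices $e_\ell$ and $e_k-e_\ell$ have $\ell$-th coordinates $+1$ and $-1$, so $P=\conv\bigl(\{e_\ell,\,e_k-e_\ell\}\cup Q\bigr)$ for $Q:=P\cap\{x_\ell=0\}$ is a skew bipyramid over $Q$ with the vertex $e_k$ of $Q$ in the role of $v$ in \autoref{lem:skew-bipyramid} (the segment $[e_\ell,e_k-e_\ell]$ is not an edge of $P$, as otherwise a facet through both endpoints would place $e_k$ on level $2$, so $Q$ has exactly the $3d-4$ vertices $\Vert{P}\setminus\{e_\ell,e_k-e_\ell\}$). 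Thus $Q$ is a \str $(d{-}1)$-polytope with $3d-4=3(d-1)-1$ vertices, and \autoref{thm:3d-1} identifies $Q$ — and with \autoref{lem:skew-bipyramid} also $P$ — as a smooth Fano polytope.

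It remains to produce $\ell$, which is the heart of the argument. Expanding $v_P=\sum_{v\in\Vert{P}}v$ over the blocks $\Vert{F}$, $V(F,0)$, $V(F,-1)$, $\{y\}$, and using that $\ecc(P)=1$ forces $v_P$ to be a vertex $e_{j_0}$ of $F$, one first excludes that $\phi$ is surjective: together with $\phi\circ\phi=\mathrm{id}$ on $S$, surjectivity would make $\phi$ a fixed-point-free involution on $S$ and a $3$-cycle on $T$, with $y=e_{j_0}-\sum_{j\in T}e_j$; if $j_0\in T$ the polytope then splits as $P_6^{\oplus\frac{d-3}{2}}\oplus P_T$ with $P_T$ a three-dimensional \str polytope on $7$ vertices (disposed of by a direct three-dimensional check), and if $j_0\in S$ one finds among the listed vertices a lattice point contradicting terminality. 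Hence $\image\phi\subsetneq\Vert{F}$, and every index missing from $\image\phi$ lies in $T$. For such an index $\ell$, comparing the $\ell$-th coordinate of $v_P$ computed as $e_{j_0}$ and as $\sum_{v\in\Vert{P}}v$ (where only $e_\ell$, $\phi(e_\ell)-e_\ell$, and $y$ contribute) gives $(v_P)_\ell=y_\ell\in\{0,1\}$, equal to $1$ for at most one index; so if $\image\phi$ omits two indices, one of them satisfies $y_\ell=0$ and we are done. The only remaining case, a single missing index which moreover realises the vertex sum, is the main obstacle: one removes it by rerunning the argument from another special facet through the vertex $v_P$ — every facet containing $v_P$ is special — chosen so that the single missing index no longer carries the vertex sum.
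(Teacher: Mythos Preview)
Your overall strategy—normalize via \autoref{prop:classificationOf0and-1}, locate an index $\ell\notin\image\phi$ with $y_\ell=0$, and read off a skew bipyramid—is exactly the paper's, and the reduction to finding such an $\ell$ is correct. But the three cases you flag as ``the heart of the argument'' are each dispatched with a sentence that does not hold up.

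First, the structural claim that surjectivity of $\phi$ forces ``a fixed-point-free involution on $S$ and a $3$-cycle on $T$'' is false: the relation $\phi(\phi(e_i))=e_i$ for $i\in S$ only says that each element of $S$ sits in a transposition, and the partner may lie in $T$. Consequently the splitting $P=P_6^{\oplus(d-3)/2}\oplus P_T$ need not exist, and your ``direct three-dimensional check'' never gets off the ground. Second, for surjective $\phi$ with $j_0\in S$ you assert a terminality violation, but the actual obstruction (which the paper works out explicitly after pinning down $z=e_1-e_2-e_3-e_4$ and distinguishing how $\phi$ acts on $\{2,3,4\}$) is that the resulting polytope fails to be simplicial or reflexive; there is no short terminality argument here. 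Third, the case where $\image\phi$ misses exactly the index $j_0$ is waved away with ``rerun the argument from another special facet through $v_P$''. You give no indication of which facet, why its $\eta$-vector is again $(d,d,d-3,1)$, or why the new $\phi$ would not suffer the same defect; this is the main gap. The paper instead determines the shape of $z$ directly (showing $z_j\le 0$ for $j\ne 1$ and $z_1\le 1$ using neighbouring special facets and \autoref{lem:phi}), reduces to $z=-e_i-e_j$ or $z=e_1-e_2-e_3-e_4$, and in the first case extracts an index $a\in[d]\setminus I$ with $e_a\notin\image\phi$ from the coordinate identity $z=e_1-\sum_i\phi(e_i)+\sum_{i\in I}e_i$, while in the second case it rules things out by exhibiting explicit non-simplex facets or a facet at lattice distance $2$. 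You need to carry out comparable work in each of these branches.
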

Again we have $3d-4=3(d-1)-1$, and the possible bases of the bipyramids are classified in
\autoref{thm:3d-1}.
\begin{proof}
  Let $\phi:=\phi^F$. \autoref{prop:classificationOf0and-1}\eqref{prop:classificationOf0and-1:1}
  shows that $\Vert F$ is a lattice basis, and we may assume that $F=\conv\{e_1, e_2, \ldots, e_d\}$
  as well as $v_P=e_1$.  Furthermore,
  \begin{align*}
    V(F,0)&=\SetOf{\phi(e_i)-e_i}{i\in[d]}&&\text{and}& V(F,-1)&\subseteq\SetOf{-e_i}{i\in[d]}\,.
  \end{align*}
  So $\phi(e_i)\ne\0$ for all $i$, hence, by \autoref{lem:phi}, if $-e_i$ is a vertex of $P$, then
  $\phi(\phi(e_i)) = e_i$.  In this case, both $\phi(e_i)-e_i$ and $e_i-\phi(e_i)$ are vertices.

  As $\eta_{-2}=1$ there is a unique vertex $z\in V(F,-2)$.  \autoref{lemma:2} implies that $z_i \ge
  -1$ for all $i\in[d]$.  If $z_j>0$ for some $j\ge 2$, then $z\in V(\neigh(F,e_j),k)$ for some $k
  \le -3$, as $\neigh(F,e_j)$ has normal $\1-e_j$. Now $e_1\in \neigh(F,e_j)$, so $\neigh(F,e_j)$ is
  special. However, by Table~\ref{tab:eta} all possible $\eta$-vectors have $\eta_k=0$ for $k\le
  -3$, so $z_j\in\{-1,0\}$ for $j\ge2$.  Now consider
  \[
    v_P\ =\ e_1 \ =\ z + \sum_{i=1}^{d} e_i + \sum_{i=1}^{d} (\phi(e_i)-e_i) + \sum_{i \in I} -e_i
  \]
  for some index set $I\subset [d]$ with $|I|=d-3$.  Solving for $z$ yields
  \begin{equation}\label{eq:z_equal_sumOf}
    z\ =\ e_1 - \sum_{i=1}^d \phi(e_i) + \sum_{i\in I} e_i
  \end{equation}
  If $1\not\in I$ then $z_1\le 1$. If $1\in I$, then $-e_1$ is a vertex of $P$, so we know
  $\phi(\phi(e_1))=e_1$ by \autoref{lem:phi} and $e_1\in\image\phi$. So $e_1$ appears in both sums,
  and again $z_1\le 1$. Since $z_i \in \{ 0,-1 \}$ for all $i\ge2$ we know that $z$ has (up to
  lattice equivalence) one of the following forms: either $z = -e_i - e_j$ for distinct $i,j \in
  [d]$ or $z = e_1 - e_2 -e_3 -e_4$.\smallskip

  \noindent\framebox{Let $z = -e_i - e_j$.}
  Assume that $[d]\setminus I=\{a,b,c\}$. If $\phi(e_r)=e_a$ holds for some $r$ then
  Equation~\eqref{eq:z_equal_sumOf} implies $a \in \{ i,j \}$, and similarly for $b$ and $c$.  Since
  $a$, $b$, and $c$ are pairwise distinct we have $\{a,b,c\}\not\subseteq\{i,j\}$. So at least one
  of the indices cannot appear in the image of $\phi$ and we may assume that $e_a\not\in\image\phi$.
  So $P$ can be written as a skew bipyramid with apices $e_a$ and $\phi(e_a)-e_a$ over some
  $(d{-}1)$-dimensional polytope $Q$ with $3d-4$ vertices.  The polytope $Q$ is contained in linear
  hyperplane $x_a=0$.  \autoref{lem:Fano=Bipyramid_over_Fano} implies the claim.\smallskip

  \noindent\framebox{Let $z = e_1 - e_2 -e_3 -e_4$.}
  In this case $-e_2,-e_3,-e_4 \notin P$ by Equation~\eqref{eq:z_equal_sumOf}.  Let
  $J=\{2,3,4\}$. Then $I=[d]\setminus J$ and $J\subseteq \image \phi$.  We further distinguish two
  cases.
  \begin{itemize}
  \item Assume first that there are indices $k,\ell\in J$ with $\phi(e_k)=e_\ell$.  We may assume
    $k=3$ and $\ell=4$. By Equation~\eqref{eq:z_equal_sumOf} $\phi$ is bijective, so $\phi(e_2) \ne
    e_4$. If also $\phi(e_2)\ne e_3$ then the primitive vector
      \[
        u\ =\ \bigl(2,0,-1, 1, 2, \ldots, 2\bigr)\,.
      \]
      defines a valid inequality $\langle u,x\rangle\le 2$ for all $x\in P$.  The $d-1$ vertices
      \[
        z\,,\  e_1\,,\  \phi(e_2)-e_2 \,,\  -e_3 + e_4\,,\  e_5\,,\  \ldots\,,\  e_d
      \]
      satisfy $\langle u,x\rangle=2$, so $P$ would not be reflexive.  If on the other hand
      $\phi(e_2)=e_3$ then by \autoref{lem:2times_neighboring} the set
      \[
      G \ := \ \conv\left( \SetOf{e_k}{k\ne 2,3} \;\cup\; \{ (-e_2+e_{3}),\, (-e_3+e_{4}) \} \right)
      \]
      is a facet of $P$ with normal $u_G = \1 -2e_2 -e_3$.  But $\langle u_G, z \rangle = 1$, a
      contradiction.

    \item It remains to consider the case that $\phi(e_k)\ne e_\ell$ for all $k,\ell\in J$.  The
      vector
      \[
      u' \ =\ (1,0,0,0,1,\dots,1)\,,
      \]
      defines a valid inequality $\langle u',x\rangle\le 1$. But then the $d+1$ vertices
      \[
      e_1\,,\  \phi(e_2)-e_2 \,,\  \phi(e_3)-e_3 \,,\  \phi(e_4)-e_4 \,,\  e_4\,,\  \ldots\,,\  e_d\,,
      \]
      span a facet which is not a simplex.\qedhere
  \end{itemize}
\end{proof}

The remainder of this section deals with the situation where the $\eta$-vector reads $(d,d-1,d-1)$.
By the previous proposition it suffices to consider polytopes whose $\eta$-vectors of all special
facets agree.

\begin{lemma} \label{lem:dd-1d-1_opposite_of_vertexsum} Let $P$ be a $d$-dimensional \str polytope
  with exactly $3d-2$ vertices such that $v_P$ is a vertex and $\eta=(d,d-1,d-1)$ for all special
  facets of $P$.
  Then there is a special facet $F$ with $\opp(F,v)\! \ne \opp(F,v_P)$ for all $v\in \Vert F\setminus
  \{ v_P \}$.
\end{lemma}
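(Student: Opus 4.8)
The plan is to combine the structural dichotomy of \autoref{prop:classificationOf0and-1} for special facets with a local analysis around $v_P$.

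Every special facet $F$ has $\eta^F=(d,d-1,d-1)$, so $\eta^F_0=d-1$ and \autoref{lem:eta_0_d-1_smooth} applies: the vertices of $F$ form a lattice basis, and $F$ is covered by part~\eqref{prop:classificationOf0and-1:2} or part~\eqref{prop:classificationOf0and-1:3} of \autoref{prop:classificationOf0and-1}. In part~\eqref{prop:classificationOf0and-1:3} there is a unique vertex $v^\ast\in\Vert F$ with $\opp(F,v^\ast)\notin V(F,0)$; since by \autoref{cor:opposite_at_0} every vertex in $V(F,0)$ is opposite to some vertex of $F$, the map $\opp(F,\cdot)$ restricts to a bijection $\Vert F\setminus\{v^\ast\}\to V(F,0)$, and regardless of whether $v^\ast=v_P$ one reads off $\opp(F,v_P)\ne\opp(F,v)$ for all $v\in\Vert F\setminus\{v_P\}$. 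In part~\eqref{prop:classificationOf0and-1:2} there is a unique unordered pair $\{v,w\}$ of \emph{colliding} vertices, meaning $\opp(F,v)=\opp(F,w)$, while $\opp(F,\cdot)$ is injective on $\Vert F\setminus\{v,w\}$ with values $\ne\opp(F,v)$; hence this $F$ satisfies the conclusion of the lemma \emph{unless} $v_P\in\{v,w\}$. Call a special facet \emph{bad} if it is covered by part~\eqref{prop:classificationOf0and-1:2} with $v_P$ among its colliding vertices. By the above it is enough to show that not every special facet is bad.

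Suppose every special facet is bad, fix one, $F$, and normalize it by \autoref{prop:classificationOf0and-1}\eqref{prop:classificationOf0and-1:2}: up to lattice equivalence $F=\conv\{e_1,\dots,e_d\}$, $v_P=e_1$, the colliding partner of $e_1$ is $e_2$, and $z:=\opp(F,e_1)=\opp(F,e_2)=-e_1-e_2+e_a+e_b$ for some $a,b\in[d]\setminus\{1,2\}$; the vertices $e_3,\dots,e_d$ are good, with $\opp(F,e_i)=\phi(e_i)-e_i$ and $\phi(e_i)\ne\0$, where $\phi:=\phi^F$. Expanding the vertex sum $v_P=\sum_{v\in\Vert P}v=e_1$ via the explicit shapes of $V(F,0)$ and $V(F,-1)$ in \autoref{prop:classificationOf0and-1}\eqref{prop:classificationOf0and-1:2} yields
\[
  \sum_{i=3}^{d}\phi(e_i)\ +\ \sum_{v\in V(F,-1)} v\ =\ e_1-e_a-e_b ;
\]
comparing first coordinates forces $\phi(e_r)=e_1$ for at least one $r\ge 3$, and the identity further pins the level-$(-1)$ vertices down to the short lists in \autoref{prop:classificationOf0and-1}\eqref{prop:classificationOf0and-1:2}. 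Now pass to $G:=\neigh(F,e_2)=\conv\{e_1,e_3,\dots,e_d,z\}$: it contains $v_P=e_1$, hence is special, hence bad. \autoref{lemma:2} gives $u_G=\1-e_2$, so $e_2\in V(G,0)$ and $\opp(G,z)=e_2$; badness of $G$ then restricts $\opp(G,e_1)$ to at most two values, and in the same way \autoref{lemma:2} controls the normals of the further special facets $\neigh(F,e_i)$, $i\ge 3$. Tracing all the resulting opposite-vertex relations — and using that \autoref{lem:2times_neighboring} forces the $(d{-}3)$-face $\conv\bigl(\Vert F\setminus\{e_1,e_2\}\bigr)$ to have the triangular link $\{e_1,e_2,z\}$ — one arrives at a configuration incompatible either with reflexivity (via the normal formula of \autoref{lemma:2}) or with terminality (via \autoref{lemma:nill_sumDistant} and \autoref{lem:phi}).

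The last step is the main obstacle. The ``flip'' sending a bad facet to the special facet across its colliding partner need not terminate — it can cycle back to $F$ — so rather than iterating one must squeeze the contradiction out of the rigidity of a bounded cluster of special facets around $v_P$ together with the single global equation $\sum v=v_P$; keeping the case distinctions under control (whether $\phi(e_a)=e_2$, whether $-e_1$ or $-e_2$ is a vertex, whether $a=b$) is where essentially all of the work goes.
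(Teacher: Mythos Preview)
Your reduction to ``bad'' special facets is correct and mirrors the paper's implicit structure, but the proof is incomplete: after normalizing a bad facet you do not actually derive the contradiction, and your final paragraph acknowledges this. The vague appeal to ``tracing opposite-vertex relations'' and to contradictions ``either with reflexivity or with terminality'' is not a proof, and the flip $F\mapsto\neigh(F,e_2)$ you describe can indeed cycle, so that line of attack does not close.

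The paper's route out is different and much shorter than what you sketch. The key unused leverage is the \emph{global} hypothesis $\eta^G=(d,d-1,d-1)$ for every special facet: the contradiction is obtained by exhibiting a special facet with a vertex at level $-2$. Concretely, with $F=\conv\{e_1,\dots,e_d\}$, $v_P=e_1$, and $x:=\opp(F,e_1)=\opp(F,e_2)=-e_1-e_2+e_a+e_b$: if $a=b$ then $\neigh(F,e_a)$ is special (it contains $e_1$) and $\langle \1-e_a,x\rangle=-2$, done. If $a\ne b$, one first shows $V(F,-1)\subseteq\{-e_1,\dots,-e_d\}$ (any $-e_1-e_2+e_r$ would drop to level $-2$ for the special facet $\neigh(F,e_r)$), and then the vertex-sum equation you wrote forces $-e_a,-e_b\in\Vert P$ and $e_a,e_b\notin\image\phi$. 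Now \autoref{lem:2times_neighboring} applied to the good vertices $e_a,e_b$ produces the facet
\[
H''\ =\ \conv\bigl(\{e_k:k\ne a,b\}\cup\{\phi(e_a)-e_a,\ \phi(e_b)-e_b\}\bigr),
\]
which contains $e_1$ and hence is special, but $\langle u_{H''},x\rangle=-2$. That is the whole argument; no cycling through bad facets, no case split on $\phi(e_a)$ or on whether $-e_1,-e_2$ are vertices.
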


\begin{proof}
  Pick any special facet $G$ and let $\phi:=\phi^G$.  If there is a vertex $w$ in $G$ with
  $\opp(G,w) \in V(G,-1)$ then we may take $F=G$, as no two vertices of $G$ share the same opposite
  vertex.

  So assume that $\opp(G,w)\in V(G,0)$ for all $w\in\Vert G$.  Let $w\in\Vert{G}\setminus\{v_P\}$.
  Then the neighboring facet $H:=\neigh(G,w)$ is special as it contains $v_P$.  Suppose that the
  vertex $w$ is on level $-1$ with respect to $H$.  Then, as above, we may take $F=H$.

  We will refute all remaining cases.  In view of our argument so far we can assume that for all
  vertices $w\in\Vert{G}\setminus\{v_P\}$ we have $\langle u_{\neigh(G,w)},w\rangle=0$; other
  choices for this value are ruled out by our assumption that the $\eta$-vectors of all special
  facets are equal to $(d,d-1,d-1)$.  In this case we can assume by \autoref{prop:classificationOf0and-1}
  that $\Vert G$ is a lattice basis, and, up to unimodular transformation, $G=\conv\{e_1,e_2,\ldots,
  e_d \}$ as well as $v_P = e_1$ and, for some indices $a,b\not\in\{1,2\}$,
  \[
  x \ := \ \opp(G,e_1) \ = \ \opp(G,e_2) \ = -e_1 -e_2 +e_a +e_b\,.
  \]
  If $a=b$, then $x = -e_1 -e_2 +2 e_a$. So $H':=\neigh(G,e_a)$ is a special facet with standard
  facet normal $u_{H'} = \1 -e_a$.  However, this means that $\langle u_{H'} , x\rangle = -2$ in
  contradiction to $\eta^{H'}=(d,d-1,d-1)$. So $a\ne b$.
  In view of \autoref{prop:classificationOf0and-1} we know
  \[
  V(G,-1) \ \subseteq \ \{ -e_1,\, -e_2,\, \ldots,\, -e_d \} \;\cup\; \SetOf{-e_1-e_2+e_r}{r\in[d]}\, .
  \]
  Assume that the set $V(G,-1)$ contains a vertex $z=-e_1-e_2+e_r$ for some $r\ne 1,2$.  Then,
  similar to the case above, the neighboring facet $\neigh(G,e_r)$ has standard facet normal
  $\1-e_r$ implying that $z$ is at level minus two with respect to that facet.  Again this is
  impossible.  We conclude that $V(G,-1) \subseteq \SetOf{-e_i}{i\in[d]}$.  Now $v_P=e_1$ requires
  that $-e_a$ and $-e_b$ both are vertices of $P$ and that $e_a,e_b \notin \image \phi$.  By
  \autoref{lem:2times_neighboring}
  \[
  H'' \ := \ \conv\left( \SetOf{e_k}{k\ne a,b} \;\cup\; \{ \phi(e_a)-e_a,\, \phi(e_b)-e_b \} \right)
  \]
  is a facet which is also special, since $e_1 \in H$.  But $x \in V(H'',-2)$,  a
  contradiction.

  Summarizing, this shows that we cannot have $\langle u_{\neigh(G,w)},w\rangle=0$ for all
  $w\in\Vert{G}\setminus\{v_P\}$.  So there is no vertex $v$ of $G$ with $\opp(G,v_P) = \opp(G,v)
  \in V(G,0)$.  We may thus take $F=G$.
\end{proof}

We also need the following variation of the previous lemma.

\begin{lemma}\label{lem:unique_opp_at_level-1}
  Let $P$ be a $d$-dimensional \str polytope with $3d-2$ vertices
  such that $v_P$ is a vertex and $\eta=(d,d-1,d-1)$ for all special facets of $P$.
  Then for some special facet $F$ there is  $v\in \Vert F$ with $\opp(F,v)\in V(F,-1)$.
\end{lemma}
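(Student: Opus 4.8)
The plan is to assume the conclusion fails and derive a contradiction. So suppose $\opp(F,v)\notin V(F,-1)$ for every special facet $F$ of $P$ and every $v\in\Vert F$. An opposite vertex is never a vertex of $F$, hence sits at level $\le 0$; and since $\eta^F=(d,d-1,d-1)$, Table~\ref{tab:eta} forbids vertices of $P$ below level $-1$ on a special facet. So the assumption forces $\opp(F,v)\in V(F,0)$ for all $v\in\Vert F$ and every special facet $F$. As $|V(F,0)|=d-1<d$, \autoref{cor:opposite_at_0} together with pigeonhole shows that exactly two vertices of $F$ have a common opposite vertex; that is, every special facet is of the shape in \autoref{prop:classificationOf0and-1}\eqref{prop:classificationOf0and-1:2}.

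Next I would use \autoref{lem:dd-1d-1_opposite_of_vertexsum} to pick a special facet $F$ with $\opp(F,v)\ne\opp(F,v_P)$ for all $v\ne v_P$, so that $v_P$ is not one of the two vertices of $F$ sharing an opposite. Writing $\phi:=\phi^F$ and invoking \autoref{prop:classificationOf0and-1}\eqref{prop:classificationOf0and-1:2}, I would fix coordinates with $F=\conv\{e_1,\dots,e_d\}$, colliding vertices $e_1,e_2$, and
\[
x\ :=\ \opp(F,e_1)\ =\ \opp(F,e_2)\ =\ -e_1-e_2+e_a+e_b
\]
for suitable $a,b\in[d]\setminus\{1,2\}$; then $v_P=e_s$ with $s\ge 3$, and by \autoref{lem:one_vertex_at_0} every $e_j$ with $j\ge 3$ satisfies $\phi(e_j)\ne\0$ and $\opp(F,e_j)=\phi(e_j)-e_j$.

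If $s\notin\{a,b\}$ the argument is essentially the end of the proof of \autoref{lem:dd-1d-1_opposite_of_vertexsum}: neighbouring-facet arguments rule out the points $-e_1-e_2+e_r$ from $V(F,-1)$ and show $a\ne b$, while expanding $v_P=\sum_{v\in\Vert P}v$ and comparing with $\langle u_F,v_P\rangle=1$ forces $-e_a,-e_b\in\Vert P$ and $e_a,e_b\notin\image\phi$. By \autoref{lem:2times_neighboring} the simplex $H:=\conv\bigl((\Vert F\setminus\{e_a,e_b\})\cup\{\phi(e_a)-e_a,\phi(e_b)-e_b\}\bigr)$ is then a facet; it contains $v_P=e_s$, hence is special, and \autoref{lemma:2} gives $u_H=\1-e_a-e_b$. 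Consequently $\langle u_H,x\rangle=-2$, which is impossible since $\eta^H=(d,d-1,d-1)$.

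The step I expect to be the real obstacle is the remaining case $v_P\in\{e_a,e_b\}$ (which subsumes $a=b=s$): here $\neigh(F,v_P)$ is no longer special, so the level-$(-2)$ vertex cannot be produced in the same way. I would again read off constraints from the expansion of $v_P$ — they make $\phi$ restricted to $\{e_3,\dots,e_d\}$ nearly bijective — and combine them with \autoref{lem:phi} applied to the vertices $\pm e_j$ that are thereby forced into $P$, in order to pin down $\opp(F,v_P)$ precisely and once more locate a vertex of $P$ at level $\le -2$ on some special facet. Once all sub-cases yield such a contradiction, the initial assumption is refuted and the lemma follows.
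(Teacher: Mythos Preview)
Your scaffold---assume the conclusion fails, invoke \autoref{lem:dd-1d-1_opposite_of_vertexsum}, and then set up coordinates via \autoref{prop:classificationOf0and-1}\eqref{prop:classificationOf0and-1:2}---is exactly what the paper does; the differences lie in the case split and in the mechanism of contradiction for the hard sub-case. The paper splits on $a=b$ versus $a\ne b$. For $a\ne b$ it never looks for a level-$(-2)$ vertex: choosing whichever of $a,b$ differs from $s$ (say $a$, after arranging $\phi(e_a)\ne e_1$), it passes to the special facet $H:=\neigh(F,e_a)$, uses the balance forced by $\eta^H=(d,d-1,d-1)$ to conclude $e_a\notin\image\phi$ and $-e_a\in\Vert P$, so that $V(H,0)=\{\phi(e_i)-e_i:i\ne 1,2,a\}\cup\{\pm e_a\}$, and then notes that $u_{H,e_1}=e_1$ gives $\langle u_{H,e_1},y\rangle\ge 0$ for every $y\in V(H,0)$. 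Hence $\opp(H,e_1)\notin V(H,0)$, directly contradicting the standing hypothesis. Your \autoref{lem:2times_neighboring}-route is a correct alternative when $s\notin\{a,b\}$, but the situation $a\ne b$ with $s\in\{a,b\}$ drops into your ``hard'' bucket, whereas the paper dispatches it in one stroke by switching to the other index.

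The genuine gap is the sub-case $a=b=s$. The mechanism you propose---``locate a vertex of $P$ at level $\le -2$ on some special facet''---does not fire here: the special neighbours $\neigh(F,e_j)$ for $j\ne s$ all have normal $\1-e_j$, and one checks that every vertex of $P$ (each $e_i$, the point $x=-e_1-e_2+2e_s$, each $\phi(e_i)-e_i$, each $-e_i$) stays at level $\ge -1$ with respect to these. The paper's contradiction is structural instead: it shows $V(F,-1)=\{-e_i:i\ne k\}$ for some $k$ (arranging $k\ne 1$), then exhibits $u=(2,-2,1,2,\dots,2)$, verifies that $\langle u,y\rangle\le 2$ holds for all $y\in\Vert P$, and observes that this is met with equality by $d$ vertices (among them $e_1,e_4,\dots,e_d,-e_2$ and $-e_1-e_2+2e_3$). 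Thus $u$ defines a facet, contradicting reflexivity. In this sub-case you should aim for a reflexivity or simpliciality violation rather than a level-$(-2)$ vertex.
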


\begin{proof}
  We prove this by contradiction, so assume that $\opp(G,v)\in V(G,0)$ for all special facets $G$
  and vertices $v\in \Vert G$.  Fix a special facet $G$ according to
  \autoref{lem:dd-1d-1_opposite_of_vertexsum}, so $\opp(G,v_P)\ne \opp(G,v)$ for all vertices $v$ of
  $\Vert G\setminus\{v_p\}$. By
  \autoref{prop:classificationOf0and-1}.\eqref{prop:classificationOf0and-1:3} we can find a lattice
  transformation such that $G=\conv\{ e_1,e_2,\ldots, e_d \}$ and with $\phi:=\phi^G$ we have
  \begin{align}
    V(G,0) \ &= \ \SetOf{ \phi(e_i)-e_i }{ i\in [d] \setminus\{ 1,2 \} } \;\cup\; \{ -e_1 -e_2 +e_a+e_b\}\notag\\
    V(G,-1) \ &\subseteq \ \{ -e_1,\, -e_2,\,\ldots, -e_d \} \;\cup\; \SetOf{-e_1-e_2+e_r}{r\in[d]}
    \label{eq:a}
  \end{align}
  with $a,b \not\in \{ 1,2 \}$. By our choice of $G$ we know that $v_P$ is distinct from $e_1$ and $e_2$.

  Assume first that $a\ne b$. Up to relabeling we can assume $v_P \ne e_a$ and $\phi(e_a)\ne e_1$.
  So $H:= \neigh(G,e_a)$ is still special. Further, $u_{H,e_1} = e_1$ and $u_H = \1 - e_a$, so we
  observe that $e_a\in V(H,0)$. Now the assumption $\eta^H = (d,d-1,d-1)$ implies $|V(G,0)\cap
  V(H,-1)|=|V(G,-1)\cap V(H,0)|$.  Hence, $V(G,-1)\cap V(H,0)\subseteq\{-e_a\}$ and $-e_1 -e_2 +e_a
  +e_b \in V(G,0)\cap V(H,-1)$ shows $e_a \notin \image \phi$. So
  \[
  V(H,0) \ = \ \SetOf{ \phi(e_i)-e_i }{ i\in [d] \setminus\{ 1,2, a \} } \cup \{e_a, -e_a\}\,.
  \]
  Therefore, $\langle u_{H,e_1}, y \rangle \ge 0$ for any $y\in V(H,0)$.  Hence, $\opp(H,e_1)\notin
  V(H,0)$ by \autoref{prop:nill:5.5}(\ref{prop:nill:5.5:2}) and we can choose $F=H$.

  We are left with $a = b$.  This implies $v_p = e_a = e_b$ as otherwise $\eta^H_{-2}\ne 0$ for the
  special facet $H:=\neigh(G,e_a)$.  We may assume that $a=3$. If $x:=-e_1-e_2+e_r\in V(G,-1)$ for
  some $r\ge 4$, then, as $H:=\neigh(G,e_4)$ is special, evaluating $\langle u_H,x\rangle =\langle
  \1-e_r, x\rangle=-2$ contradicts the assumption $\eta^H=(d,d-1,d-1)$.  Further, there neither
  exists a vertex $x\in V(G,-1)$ with $\langle u_{G,v_p}, x \rangle >0$ since otherwise
  \[
  \langle u_{G,v_P},v_P\rangle \ = \ \sum_{v\in \Vert P} \langle u_{G,v_P}, v \rangle \ \ge \ 2 \ >
  \ 1 \, .
  \]
  By Equation~\eqref{eq:a} we have $V(G,-1) = \smallSetOf{-e_i}{i\in[d] \setminus \{ k \}}$ for
  some $k\in[d]$. We may assume that $k\ne 1$.  So
  \begin{align*}
    u \ := \ (2,-2,1,2,\dots,2)
  \end{align*}
  induces an inequality $\langle u,y\rangle \le 2$ valid for all $y\in P$.  Furthermore, the
  vertices $e_1,e_4,e_5,\ldots, e_d, -e_2$, and $-e_1 -e_2 +2e_3$ satisfy this with equality, so $u$
  defines a facet, contradicting the reflexivity of $P$. This proves the claim.
\end{proof}
The previous result can now be extended to a characterization.
\begin{proposition}\label{prop:specialisvertex}
  Let $P$ be a $d$-dimensional \str polytope with exactly $3d-2$
  vertices, where $d\ge 4$, such that $v_P$ is a vertex and $\eta^G=(d,d-1,d-1)$ for every special
  facet $G$ of $P$.

  Then $P$ is lattice equivalent to a (possibly skew) bipyramid over a $(d{-}1)$ dimensional smooth
  Fano polytope with $3d-4$ vertices.
\end{proposition}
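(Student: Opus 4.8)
The plan is to normalize $P$, up to lattice equivalence, to a single coordinate model, exploiting that \emph{every} special facet has $\eta$-vector $(d,d-1,d-1)$, and then to read off a coordinate hyperplane that exhibits $P$ as a bipyramid. First I would invoke \autoref{lem:unique_opp_at_level-1} to fix a special facet $F$ having a vertex whose opposite lies in $V(F,-1)$. Since $\eta^F_0=d-1$ and one opposite sits strictly below level $0$, we have $\opp(F)\ne V(F,0)$, so $F$ is of the shape described in \autoref{prop:classificationOf0and-1}\eqref{prop:classificationOf0and-1:3}; in particular no two vertices of $F$ share an opposite. After a lattice transformation I may thus assume $F=\conv\{e_1,\dots,e_d\}$, that $\opp(F,e_1)\in V(F,-1)$, and that, with $\phi:=\phi^F$,
\[
V(F,0)=\bigl\{\phi(e_2)-e_2,\dots,\phi(e_d)-e_d\bigr\},\qquad
V(F,-1)\subseteq\{-e_1,\dots,-e_d\}\cup\bigl\{-2e_1-e_r+e_s+e_t\bigr\}.
\]
As $V(F,0)$ has exactly $\eta_0=d-1$ elements, $\phi(e_i)\notin\{\0,e_i\}$, so $e_i$ is good with $\opp(F,e_i)=\phi(e_i)-e_i$, for each $i\ge2$; and, since $\ecc(P)=1$ and $P$ is terminal, $v_P=e_m$ for some $m\in[d]$.

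The next step, which I expect to be the main obstacle, is to pin down $V(F,-1)$. Here one uses that every facet of $P$ incident with $v_P$ is special and hence has $\eta$-vector $(d,d-1,d-1)$, so no facet of $P$ carries a vertex at level $\le-2$. For a hypothetical exotic vertex $y=-2e_1-e_r+e_s+e_t\in V(F,-1)$ (with $r,s,t$ pairwise distinct, $r\ne1$), whenever $s\ge2$ the neighbour $\neigh(F,e_s)$ has standard normal $\1-e_s$ because $\opp(F,e_s)=\phi(e_s)-e_s\in V(F,0)$, so $\langle\1-e_s,y\rangle=-2$; as $\neigh(F,e_s)$ is special unless $m=s$, this forces $m=s$, and the symmetric computation with $e_t$ forces $m=t$, a contradiction. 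A short variant of the same argument, together with \autoref{lemma:2} and \autoref{lem:phi}, should eliminate the remaining cases and give $\opp(F,e_1)=-e_1$; I would conclude $\phi(e_1)=\0$, $-e_1\in\Vert{P}$, and $V(F,-1)=\{-e_i: i\in[d]\setminus\{k\}\}$ for a unique $k\ne1$. The delicate point is that \autoref{prop:classificationOf0and-1}\eqref{prop:classificationOf0and-1:3} a priori permits several flavours of exotic vertex, each of which must be ruled out by an argument combining the neighbours of $F$, the position of $v_P$, and the absence of level $-2$ on special facets.

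With $V(F,-1)$ in hand I would expand the vertex sum,
\[
e_m \ = \ v_P \ = \ e_1+\sum_{i=2}^{d}\phi(e_i)-\sum_{i\ne k}e_i, \qquad\text{i.e.}\qquad \sum_{i=2}^{d}\phi(e_i)=\1+e_m-e_1-e_k,
\]
and compare both sides as multisets of standard basis vectors: this gives $e_1\notin\image\phi$ when $m\ne1$, and $e_k\notin\image\phi$ when $m=1$. If $m\ne1$, the only vertices of $P$ with a non-zero first coordinate are $e_1$ and $\opp(F,e_1)=-e_1$, so $P$ is a proper bipyramid with apices $\pm e_1$ over $Q:=P\cap\{x_1=0\}$. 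If $m=1$ (so $k\ge2$), the only vertices with a non-zero $k$-th coordinate are $e_k$ and $\opp(F,e_k)=\phi(e_k)-e_k$, whose connecting segment meets $\{x_k=0\}$ in the relative interior point $\tfrac12\phi(e_k)$ of $Q:=P\cap\{x_k=0\}$, so $P$ is a skew bipyramid over $Q$. In either case $Q$ is a $(d{-}1)$-dimensional lattice polytope with $(3d-2)-2=3(d-1)-1$ vertices, hence simplicial, terminal and reflexive by \autoref{lem:Fano=Bipyramid_over_Fano} respectively \autoref{lem:skew-bipyramid}, and therefore smooth Fano by \autoref{thm:3d-1}; this proves the proposition.
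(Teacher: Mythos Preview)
Your overall strategy coincides with the paper's: invoke \autoref{lem:unique_opp_at_level-1} to place $F$ in the normal form of \autoref{prop:classificationOf0and-1}\eqref{prop:classificationOf0and-1:3}, use that every neighbour $\neigh(F,e_s)$ containing $v_P$ is again special with $\eta$-vector $(d,d-1,d-1)$ to prune $V(F,-1)$, and then read off a bipyramid direction from the vertex-sum identity. Your reduction of a putative exotic vertex $-2e_1-e_r+e_s+e_t$ to the shape $-e_1-e_r+e_m$ (one of $s,t$ must equal $1$, the other must equal $m$) is exactly the paper's first move, and your treatment of the case $m=1$ is correct and matches the paper's case ``$a=1$''.

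The gap is the case $m\ne1$. Your assertion that ``a short variant of the same argument \dots\ should eliminate the remaining cases and give $\opp(F,e_1)=-e_1$'' is false: the possibility $\opp(F,e_1)=-e_1-e_r+e_m$ genuinely survives. In the paper (their case $a\ne1$, with the roles of your indices $1$ and $m$ exchanged) this splits further. If $\phi(e_r)=e_m$, the configuration \emph{does} occur and $P$ is a skew bipyramid with apices $e_1$ and $-e_1-e_r+e_m$ over $P\cap\{x_1=0\}$; the segment between the apices meets that hyperplane at $\tfrac12(e_m-e_r)$, an interior point because $e_m-e_r=\phi(e_r)-e_r$ is a vertex. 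So your conclusion ``proper bipyramid with apices $\pm e_1$ whenever $m\ne1$'' already misses a valid skew case. If $\phi(e_r)\ne e_m$, the paper does \emph{not} dispose of it by any level-$-2$ bookkeeping: one first shows $-e_r\notin P$ by identifying $\opp(\neigh(F,e_r),\phi(e_r))$, then determines $V(F,-1)$ exactly, and finally exhibits an explicit primitive vector (in their coordinates $u=\1-e_1-3e_r-2\phi(e_r)$) whose supporting face contains $d{+}1$ vertices, contradicting simpliciality. Neither \autoref{lemma:2} nor \autoref{lem:phi} yields this step; it is the substantive piece of the argument that your sketch does not supply.
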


Once again we have $3d-4=3(d-1)-1$, and the possible bases of the bipyramids are classified in
\autoref{thm:3d-1}.

\begin{proof}
  By \autoref{lem:unique_opp_at_level-1} and \autoref{prop:classificationOf0and-1} we may assume
  $F=\conv\{e_1,e_2,\dots,e_d\}$ is a special facet with (using $\phi:=\phi^F$)
  \begin{align*}
    V(F,0) \ &=\ \SetOf{\phi(e_i) - e_i}{i \in [d]\setminus \{ a \}}\\
    \begin{split}
      V(F,-1) \ &\subseteq\ S\:=\  \{ -e_1,\,-e_2,\,\ldots,\, -e_d
      \}\\ &\qquad\qquad\qquad\;\cup\;\SetOf{-2e_a-e_r+e_s+e_t}{r,s,t \in [d] \text{ p.d.},\, r\ne
        a} % " " before "p.d." on purpose
    \end{split}
  \end{align*}
  for some unique index $a\in [d]$. Up to relabeling we can assume $v_P = e_1$. Let
  $x=-2e_a-e_r+e_s+e_t$ for some pairwise distinct $r,s,t$ with $r\ne a$. If $s\ne 1,a$, then $x$
  would lie on level $-2$ for the special facet $\neigh(F,e_s)$. This is excluded by assumption. The
  same holds for $t$.  So without loss of generality we may assume that $s=1$ and $t=a$, that is, $x
  = -e_a -e_r +e_1$ for some $r \in [d]$ and $r\ne 1,a$. We distinguish between $a=1$ and $a\ne 1$.
  \smallskip

  \noindent\framebox{Let $a=1$.} Then $V(F,-1) \subseteq S':= \{ -e_1,-e_2, \ldots, -e_d
  \}$ and $z := \opp(F,e_1) = \opp(F,e_a) \in V(F,-1)$. This implies that $\langle u_{F,e_1}, z
  \rangle < 0$. The only vertex in $S'$ that satisfies this is $-e_1$, so $z=-e_1$. Since
  $\eta_{-1}=d-1$ up to relabeling we have
  \[
  V(F,-1) \ = \ \{ -e_1,\, -e_2,\, \ldots,\, -e_{d-1} \}\,.
  \]
  $v_P=\0$ requires that $e_d \notin \image \phi$, and thus $P$ is a skew bipyramid with apices
  $e_d$ and $\phi(e_{d}) - e_d$ over some $(d-1)$ dimensional polytope $Q$ with $3d-4$ vertices. By
  \autoref{lem:Fano=Bipyramid_over_Fano} we know that $Q$ is again \str.  This is the claim.

  \smallskip

  \noindent\framebox{Let $a\ne 1$.} Let $z:=\opp(F,e_a) \in V(F,-1)$. As before we have
  $\langle u_{F,e_a}, z \rangle <0$.  Among the points in $S$ this is only satisfied by $-e_a$ and
  $-e_a-e_r+e_1$ for some $r\in[d]\setminus \{ 1,a \}$. In either case the facet normal of $F' =
  \neigh(F,e_a)$ is $u_{F'} = \1 -2e_a$, so only one of those points can be in $V(F,-1)$. Hence,
  $V(F,-1) \subseteq \{ -e_1,-e_2,\ldots, -e_d \} \cup \{ z \}$. Using $v_P=\0$ we see that $e_a
  \not\in \image\phi$.

  If $z = -e_a$ we conclude that $P$ is a proper bipyramid over a $(d-1)$-dimensional smooth Fano
  polytope $Q$ with $3d-4$ vertices. The polytope $Q$ is the intersection of $P$ with the hyperplane
  $x_a = 0$.

  If $z = -e_a -e_r +e_1$ for some $r\in[d]\setminus\{ 1,a \}$ and $\phi(e_r)=e_1$ we get a skew
  bipyramid with apices $e_a$ and $z$ (because the line segment between $e_a$ and $z$ intersects
  with the hyperplane $x_a = 0$ in the interior of $P$).

  We will show that the remaining case $z = -e_a -e_r +e_1$ for some $r\in[d]\setminus\{ 1,a \}$ and
  $\phi(e_r)\ne e_1$ does not occur.  We claim that $-e_r \notin P$. Let $F^{(r)} =
  \neigh(F,e_r)$. The facet normal reads $u_{F^{(r)}} = \1 - e_r$, so that $z,\, -e_r \in H(F^{(r)},
  0)$. Further, we have $u_{F^{(r)},\phi(e_r)} = \phi(e_r)+e_r$. \autoref{lemma:2} together with
  $\langle u_{F^{(r)},\phi(e_r)}, z \rangle = -1$ show that $z=\opp(F^{(r)},\phi(e_r))$. If $-e_r\in
  P$, then, by the same argument, $\opp(F^{(r)},\phi(e_r))=-e_r$. Hence, $-e_r\not\in P$, and
  \[
  V(F,-1) \ = \ \SetOf{-e_i}{i\in[d] \setminus\{a,r\}} \cup \{ z \}\, .
  \]
  Now $e_a\not\in \image\phi$, $e_r \in \image\phi$ and \autoref{lem:phi} shows that
  $\phi(\phi(e_i)) = e_i$ for all $i\in[d] \setminus \{ a \}$.

  Let $u:=\1 -e_1 -3e_r -2\phi(e_{r})$. Inspecting $V(F,0)$ and $V(F,-1)$, we see that the
  inequality $\langle u, x \rangle \le 1$ is valid for $P$ (recall that $\eta^F_{k}=0$ for $k\le
  -2$).  $u$ induces a face
  \[
  G \ := \ \conv\Big( (\Vert{F}\setminus\{ e_1,\, e_r,\, \phi(e_r) \}) \;\cup\; \{ z,\,
  -\phi(e_r),\, \phi(e_r)-e_r,\, \phi(e_1)-e_1 \} \Big)\,
  \]
  that is actually a facet. However, it contains $d+1$ vertices, so it is not a simplex.  This is
  the desired contradiction.
\end{proof}

\subsection{Polytopes with Eccentricity \texorpdfstring{$0$}{0}}

This is equivalent to $v_P=\0$, which makes every facet a special facet.  This situation is the most
difficult by far.  We start out with a general characterization of the centrally symmetric case.
For this result we neither make any assumption on the number of vertices nor on the terminality.

\begin{proposition}\label{prop:centrallySymmetric}
  Let $P$ be a simplicial and reflexive polytope. Then $P$ is centrally symmetric if and only if $v_P=\0$ and
  $\eta^G_\ell =  0$ for every facet $G$ and any $\ell\le -2$. In other words: the polytope $P$ is centrally
  symmetric if and only if it's lattice width in each facet direction is equal to $2$.
\end{proposition}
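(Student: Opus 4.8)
\emph{Plan.} The statement is an equivalence, and I would prove the two implications separately, the forward one being essentially free. Indeed, if $P=-P$ then $v_P=\sum_{v\in\Vert P}v=\sum_{v\in\Vert P}(-v)=-v_P$, so $v_P=\0$, and for every facet $G$ the inequality $\langle u_G,x\rangle\le 1$ holds on $P=-P$, hence $\langle u_G,x\rangle\ge -1$ on $P$ as well; in particular no vertex lies below level $-1$ with respect to $G$, which is the assertion $\eta^G_\ell=0$ for all $\ell\le -2$.

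For the converse assume $v_P=\0$ and $\eta^G_\ell=0$ for every facet $G$ and $\ell\le -2$. The plan is to pass to the polar: I claim the vertex set $\Vert{P^*}=\smallSetOf{u_G}{G\text{ a facet of }P}$ is stable under $w\mapsto-w$, which makes $P^*=\conv\Vert{P^*}$ centrally symmetric and therefore $P=(P^*)^*$ centrally symmetric too. Fix a facet $F$. Since $F$ is a $(d{-}1)$-simplex we have $\eta^F_1=d$, and expanding
\[
0 \ = \ \langle u_F,v_P\rangle \ = \ \sum_{k\le 1}k\,\eta^F_k \ = \ \eta^F_1-\eta^F_{-1}
\]
(the terms with $k\le -2$ vanishing by hypothesis) gives $\eta^F_{-1}=d$. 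Now the hypothesis says $\langle u_F,v\rangle\ge -1$ for every vertex $v$, so $\langle -u_F,x\rangle\le 1$ is a valid inequality for $P$; the face $F'=\SetOf{x\in P}{\langle u_F,x\rangle=-1}$ it supports is nonempty, and its vertex set is exactly $V(F,-1)$, hence has $\eta^F_{-1}=d$ elements. Because $P$ is simplicial, $F'$ is a simplex, and a simplex with $d$ vertices has dimension $d-1$; thus $F'$ is a facet of $P$ with standard outer normal $-u_F$ (which is primitive since $u_F$ is). Hence $-u_F=u_{F'}\in\Vert{P^*}$, proving the claimed symmetry of $\Vert{P^*}$.

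The one step that requires genuine care — and what I would regard as the crux — is concluding that $F'$ is a \emph{facet} rather than a lower-dimensional face; this rests entirely on simpliciality, through the identity $|\Vert{F'}|=|V(F,-1)|=d$ together with the fact that a $d$-vertex simplex is $(d{-}1)$-dimensional. The ``lattice width'' reformulation then follows by unwinding definitions: for reflexive $P$ the width in the direction $u_F$ is $1-\min_{v\in\Vert P}\langle u_F,v\rangle$, one always has $\min_v\langle u_F,v\rangle\le-1$ (else the hyperplane $\langle u_F,\cdot\rangle=0$ would support $P$ at the origin, contradicting interiority of $\0$), and the condition $\eta^F_\ell=0$ for $\ell\le-2$ is precisely $\min_v\langle u_F,v\rangle\ge-1$, so the two hypotheses together say that this width equals $2$ in every facet direction. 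Finally, it is worth noting that the condition on the $\eta$-vectors alone already forces $-P\subseteq P$, hence $-P=P$ by equality of volumes, so in the converse direction the hypothesis $v_P=\0$ is in fact redundant.
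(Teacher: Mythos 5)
Your proof is correct, but your converse runs along a genuinely different line than the paper's. You fix a facet $F$, use $v_P=\0$ together with $\eta^F_\ell=0$ for $\ell\le-2$ to get $\eta^F_{-1}=\eta^F_1=d$, and then use simpliciality to promote the face $P\cap H(F,-1)$ — a simplex with $d$ vertices — to a facet with standard outer normal $-u_F$, concluding via the polar $P^*$ that $P$ is centrally symmetric. The paper instead argues directly on vertices: the $\eta$-hypothesis makes $\langle -u_G,x\rangle\le 1$ valid for every facet $G$, so no facet inequality can separate $-v$ from $P$ for a vertex $v$, whence $-v\in P$; and $-v$ is a vertex because it satisfies $d$ linearly independent valid inequalities with equality. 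Notably, the paper's converse uses neither $v_P=\0$ nor simpliciality, whereas your main argument needs both; your closing remark (that the $\eta$-condition alone already forces $-P\subseteq P$, hence $-P=P$) is in effect a one-line version of the paper's route, so you have observed this redundancy even though your primary argument does not exploit it. What your route buys is a slightly stronger structural fact along the way — for every facet $F$ the opposite hyperplane $H(F,-1)$ again supports a facet, i.e.\ $-u_F$ is again a facet normal — together with a careful unwinding of the lattice-width reformulation, which the paper leaves implicit; what the paper's route buys is economy of hypotheses and brevity.
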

\begin{proof}
  Assume first that $v_P=\0$ and $\eta^G_\ell = 0$ for every facet $G$ and any $\ell\le -2$.  Let
  $\langle u, x \rangle \le 1$ be a facet defining inequality.  Our assumption on the $\eta$-vectors
  implies that $\langle -u, x \rangle \le 1$ is a valid inequality. Take a vertex $v$ and look at
  its antipode $-v$.  If $-v\not\in P$ then there is a facet defining inequality which separates
  $-v$ from $P$.  By the above argument there would be a valid inequality separating $v$ from $P$,
  which is impossible.  Therefore, the point $-v$ is contained in $P$. Further, since $-v$ satisfies
  at least $d$ linearly independent valid inequalities with equality it must be vertex.  The
  converse direction is obvious.
\end{proof}

The centrally symmetric smooth Fano polytopes are listed in Theorem~\ref{thm:pseudo_symmetric} which
sums up results of Voskresensk\u\i{} and Klyachko~\cite{Voskresenskij1985}, Ewald~\cite{Ewald88,Ewald96}
and Nill~\cite{1067.14052,0511294}.

\begin{corollary}\label{cor:centrally_symmetric}
  Let $P$ be a $d$-dimensional polytope \str polytope with exactly $3d-2$ vertices. If $P$ is 
  centrally symmetric then it is lattice equivalent to
  \begin{enumerate}
    \item\label{cor:centrally_symmetric:1} a double proper bipyramid over $P_6^{\oplus \frac{d-2}{2}}$ or
    \item $\HSBC{4} \oplus P_6^{\oplus \frac{d}{2} - 2}$.
  \end{enumerate}
\end{corollary}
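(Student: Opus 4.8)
The plan is to combine the general characterization of centrally symmetric reflexive polytopes from Theorem~\ref{thm:pseudo_symmetric} with the constraint that $P$ has exactly $3d-2$ vertices. By Theorem~\ref{thm:pseudo_symmetric}, a centrally symmetric \str polytope decomposes (up to lattice equivalence) as a direct sum
\[
P \ = \ C \oplus \HSBC{d_1} \oplus \HSBC{d_2} \oplus \cdots \oplus \HSBC{d_m} \oplus \HSBC{d_1'} \oplus \cdots,
\]
where $C$ is a centrally symmetric reflexive cross polytope, the $\HSBC{d_i}$ are (genuine, centrally symmetric) del Pezzo polytopes, and we also allow pseudo del Pezzo summands. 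However, $P$ is assumed centrally symmetric, so every pseudo del Pezzo summand $\HSBC{d_i}\setminus\{-\1\}$ is forbidden (it is not centrally symmetric, and central symmetry of a direct sum is equivalent to central symmetry of every summand, which follows from \autoref{lemma:Fano+Fano=Fano} together with the fact that a vertex $(v,0)$ has its antipode in the summand). Hence $P$ is a direct sum of a cross polytope $C$ of some dimension $c$ and del Pezzo polytopes $\HSBC{2k_1},\dots,\HSBC{2k_m}$; recall that del Pezzo polytopes are only simplicial in even dimensions, so each $k_i\ge 1$.

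Next I would count vertices. A cross polytope of dimension $c$ is the $c$-fold direct sum of intervals, contributing $2c$ vertices; each $\HSBC{2k_i}$ contributes $2(2k_i)+2 = 4k_i+2$ vertices. Writing $d = c + \sum_i 2k_i$ and setting $n_i := 2k_i$, the vertex count gives
\[
3d-2 \ = \ 2c + \sum_{i=1}^m (2n_i + 2) \ = \ 2d - \sum_{i=1}^m 2n_i + \sum_{i=1}^m(2n_i+2) \ = \ 2d + 2m,
\]
so $2m = d-2$, i.e.\ the number of del Pezzo summands is exactly $m = \tfrac{d-2}{2}$ (in particular $d$ is even). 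Since each del Pezzo summand has dimension $n_i\ge 2$, we get $\sum_i n_i \ge 2m = d-2$, and $\sum_i n_i = d - c \le d$. Thus $\sum_i n_i \in \{d-2, d-1, d\}$; but $\sum_i n_i$ and $2m=d-2$ have the same parity only when... more simply, $\sum_i n_i - 2m = \sum_i (n_i - 2) \ge 0$ equals $d - c - (d-2) = 2-c$, so $c \le 2$ and $\sum_i(n_i-2) = 2-c \in\{0,1,2\}$.

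Now I would enumerate the three cases for $c$. If $c=2$: then $\sum_i(n_i-2)=0$, so every $n_i=2$, giving $P = [-1,1]^{\oplus 2} \oplus P_6^{\oplus (\tfrac{d-2}{2})}$; since $[-1,1]^{\oplus 2} \oplus P_6 = [-1,1]\oplus([-1,1]\oplus P_6)$ is a double proper bipyramid over $P_6$ (adding two apex pairs), and more generally $[-1,1]^{\oplus 2}$ summed with $P_6^{\oplus(\tfrac{d-2}{2})}$... wait, the count is $m=\tfrac{d-2}{2}$ hexagons and $c=2$, dimension $2\cdot\tfrac{d-2}{2}+2 = d$: good. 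This is case \eqref{cor:centrally_symmetric:1}, the double proper bipyramid over $P_6^{\oplus(\tfrac{d-2}{2})-1}$... I must be careful: a double proper bipyramid over $P_6^{\oplus k}$ has dimension $2k+2$ and equals $[-1,1]^{\oplus 2}\oplus P_6^{\oplus k}$, so here $k = \tfrac{d-2}{2}$ and the base is $P_6^{\oplus\tfrac{d-2}{2}}$ — consistent with the statement. If $c=1$: then $\sum_i(n_i-2)=1$, impossible since each $n_i$ is even, so $n_i-2$ is even. If $c=0$: then $\sum_i(n_i - 2)=2$, so either one summand has $n_i = 4$ and the rest $n_j=2$ (this requires $m-1$ hexagons plus one $\HSBC{4}$, dimension $4 + 2(m-1) = 2m+2 = d$: good, giving $\HSBC{4}\oplus P_6^{\oplus(\tfrac{d}{2}-2)}$, which is case \eqref{cor:centrally_symmetric:2}), or two summands have $n_i = n_j = 3$, impossible by parity. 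The main obstacle is the bookkeeping: making sure the parity/dimension constraints are tracked correctly and that the direct-sum identities $[-1,1]^{\oplus 2}\oplus P_6^{\oplus k}$ genuinely coincide (up to lattice equivalence) with the double proper bipyramid described in the statement — but this is exactly the observation in \autoref{rem:main} that a bipyramid over a \str polytope $Q$ realized as $\conv(Q\cup\{\pm e_d\})$ is the proper bipyramid, applied twice.
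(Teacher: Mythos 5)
Your proposal is correct and follows essentially the same route as the paper: invoke \autoref{thm:pseudo_symmetric}, discard pseudo del Pezzo summands because they are not centrally symmetric, and then count vertices of the remaining cross-polytope and del Pezzo summands to force either a $2$-dimensional cross polytope plus hexagons or one copy of $\HSBC{4}$ plus hexagons. Your explicit bookkeeping with $c$ and the $n_i$ is just a spelled-out version of the paper's averaging argument (``$3-\tfrac{2}{d}$ vertices per dimension''), so there is nothing substantive to add.
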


\begin{proof}
  A centrally symmetric, simplicial, and reflexive polytope $P$ is a direct sum of centrally
  symmetric cross polytopes and del Pezzo polytopes. A $k$-dimensional cross polytope has $2k$
  vertices. A $k$-dimensional del Pezzo polytope has $2k+2$ vertices (and $k$ is even by definition
  of del Pezzo polytopes), while a $k$-dimensional pseudo del Pezzo polytope has $2k+1$ vertices
  (and again, $k$ must be even). The latter is not centrally symmetric, so no direct sum involving
  it will be.

  We need to find those direct sums of these three types of polytopes that have $3d-2$ vertices in
  dimension $d$. On average, for each dimension the polytope must have $3-\frac{2}{d}$
  vertices. This is only possible if at most $2$ of the summands are not $2$-dimensional del Pezzo
  polytopes. This leaves us with direct sums of $2$-dimensional del Pezzo polytopes with one
  $2$-dimensional cross polytope, or $\HSBC{4}$.
\end{proof}

We first establish a further restriction on the $\eta$-vectors.  This says that the $\eta$-vector in
the fourth column of Table~\ref{tab:eta} does not occur.  Notice that, if $d=2$ in
\autoref{cor:centrally_symmetric} then $d-2=0$ and $P^{\oplus \frac{d-2}{2}}$ is the origin.  In
this case the only polytope of type \eqref{cor:centrally_symmetric:1} is the regular cross-polytope
$P_{4a}$ shown in Figure~\ref{fig:P4a}; this is centrally symmetric.

\begin{proposition}\label{prop:novertexat-3}
  Let $P$ be a simplicial and reflexive $d$-polytope with exactly $3d-2$ vertices satisfying
  $v_P=\0$.  Then $\eta^G_\ell=0$  for all $\ell\le -3$ and for each facet $G$.
\end{proposition}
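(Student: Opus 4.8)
Suppose, for a contradiction, that some facet $G$ of $P$ carries a vertex $z$ with $\langle u_G,z\rangle\le -3$. Since $v_P=\0$ every facet is special, so $\eta^G$ is one of the columns of \autoref{tab:eta} with $\ecc(P)=0$; the only such column with a nonzero entry below level $-2$ is $(d,d,d-3,0,1)$, and it forces $z$ to be the unique vertex at level $-3$ while $\eta^G_{-2}=0$. As $\eta^G_0=d$, \autoref{lem:d_vertices_at_0} lets us assume $G=\conv\{e_1,\dots,e_d\}$ and $V(G,0)=\SetOf{\phi(e_i)-e_i}{i\in[d]}$, and every $\phi(e_i)\ne\0$ since an element of $V(G,0)$ of the form $-e_i$ would sit at level $-1$. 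Write $\phi(e_i)=e_{\sigma(i)}$ for a fixed-point-free map $\sigma\colon[d]\to[d]$. By \autoref{lem:vertices_at_minus1} we have $V(G,-1)\subseteq\SetOf{-e_i}{i\in[d]}$; put $I:=\smallSetOf{i}{-e_i\in\Vert P}$, so $|I|=d-3$ and $J:=[d]\setminus I$ has three elements, and by \autoref{lem:phi} we get $\sigma(\sigma(i))=i$ for every $i\in I$.

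Expanding $\0=v_P$ over the four level sets of $G$ gives $z_j=[j\in I]-m_j$, where $m_j:=|\sigma^{-1}(j)|$. For each $k$ the neighbouring facet $\neigh(G,e_k)$ has standard normal $\1-e_k$, so it places $z$ at level $-3-z_k$; since $\neigh(G,e_k)$ is again special, \autoref{tab:eta} forbids a vertex below level $-3$, whence $z_k\le 0$ for all $k$. Together with $\sum_k z_k=-3$ this leaves only $z=-e_a-e_b-e_c$ with $a,b,c$ distinct, or $z=-2e_a-e_b$, or $z=-3e_a$. The last is impossible: then $-2e_a$ is a nonzero non-vertex lattice point on the segment $[\0,z]\subseteq P$, contradicting terminality.

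In the two surviving cases one reads off the $m_j$ from $z_j=[j\in I]-m_j$, using $\sigma\circ\sigma=\mathrm{id}$ on $I$ and, in the case $z=-2e_a-e_b$, terminality (the midpoint of a suitable segment shows $-e_a\in P$, hence $a\in I$ and $m_a=3$). A short bookkeeping then shows that either $\sigma$ is not surjective or else $z=-e_{j_1}-e_{j_2}-e_{j_3}$ with $J=\{j_1,j_2,j_3\}$ and $\sigma$ a derangement. If $\sigma$ misses an index $j_0$ — necessarily $j_0\in J$, as $z_{j_0}\le 0$ — then $e_{j_0}$ is the only vertex with positive $j_0$-coordinate and $e_{\sigma(j_0)}-e_{j_0}$ the only one with negative $j_0$-coordinate, so $P$ is a skew bipyramid over $Q:=P\cap\{x_{j_0}=0\}$; by \autoref{lem:skew-bipyramid}, $Q$ is a \str $(d{-}1)$-polytope with $3(d-1)-1$ vertices, and $z$ is a vertex of $Q$ lying at level $-3$ with respect to the facet $\conv\SetOf{e_i}{i\ne j_0}$ of $Q$. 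But by \Obro's \autoref{thm:3d-1}, $Q$ is a direct sum of copies of $P_6$, copies of $P_5$, and at most one $3$-dimensional (skew) bipyramid over $P_6$; the level of a vertex with respect to a facet of a direct sum equals its level in the corresponding summand, and each of these three building blocks has all such levels $\ge -2$ (for $P_6$ this is \autoref{prop:centrallySymmetric}, for $P_5$ and the $3$-dimensional block it is a direct inspection). This contradicts $z$ lying at level $-3$ in $Q$.

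The remaining case, where $\sigma$ is a derangement and $z=-e_{j_1}-e_{j_2}-e_{j_3}$, is the main point of difficulty. Here $\sigma\circ\sigma=\mathrm{id}$ on $I$ forces $\sigma$ to restrict to $\{j_1,j_2,j_3\}$ either as a $3$-cycle or by pairing its elements (at least one of them) with vertices of $I$; in every such configuration the coordinate structure exhibits $P$ as a direct sum $P_1\oplus P_6^{\oplus k}$, where the $P_6^{\oplus k}$ factor is identified by \autoref{lemma:Fano+Fano=Fano} together with Casagrande's bound on the number of vertices, and $P_1$ is a \str polytope of dimension $3$, $4$, or $6$ with $3\dim P_1-2$ vertices that still carries the vertex $z$ at level $-3$ with respect to one of its facets. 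One then excludes these finitely many low-dimensional configurations by hand: in dimension $3$ the only possibility is $\conv\{e_1,e_2,e_3,\,e_2-e_1,\,e_3-e_2,\,e_1-e_3,\,-\1\}$, which has a facet whose standard outer normal is not integral and is therefore not reflexive; the dimension $4$ and $6$ cases are disposed of by the analogous direct check (the candidate polytope fails to be simplicial or reflexive). This contradiction finishes the proof; verifying these base cases and carrying out the bookkeeping of the previous paragraph is where the real work lies.
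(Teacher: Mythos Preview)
Your argument follows the paper's proof closely: the same reduction to $\eta^G=(d,d,d-3,0,1)$, the same coordinate setup via \autoref{prop:classificationOf0and-1}, and the same split into the non-surjective case (skew bipyramid over a $(d{-}1)$-polytope, contradicted by inspecting the summands in \Obro's \autoref{thm:3d-1}) versus the bijective case (direct sum splitting off a factor of dimension $3$, $4$, or $6$, which is then refuted by hand).

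There is one local slip. In the case $z=-2e_a-e_b$ you observe that the midpoint of $[z,e_b]$ is $-e_a\in P$ and then invoke terminality to conclude that $-e_a$ is a vertex, hence $a\in I$. This is backward: a point in the relative interior of a non-degenerate segment of $P$ is never extreme, so $-e_a$ is \emph{not} a vertex, and terminality is violated outright. The case $z=-2e_a-e_b$ is therefore excluded immediately, exactly as the paper does (it treats any repetition among $i,j,k$ via the same midpoint argument). Your error is harmless---the case is vacuous and your subsequent bookkeeping also lands it in the non-surjective branch---but the reasoning as written is wrong.

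Two further points worth tightening. First, your description of \autoref{thm:3d-1} (``copies of $P_6$, copies of $P_5$, and at most one $3$-dimensional (skew) bipyramid'') is loose; the actual list is $P_5\oplus P_6^{\oplus k}$ in even dimension and a (possibly skew) bipyramid over $P_6^{\oplus k}$ in odd dimension, and for the skew bipyramid the level computation is not literally reduced to a summand, so you do need the direct inspection you allude to. Second, the paper carries out the dimension-$4$ and dimension-$6$ base cases explicitly, exhibiting the vectors $(-1,-2,0,1)$ and $(1,1,-1,0,1,-2)$ as normals of non-simplex facets; you should record the analogous checks rather than leave them as ``the analogous direct check''.
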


\begin{proof}
  We fix a facet $G$ and abbreviate $\eta=\eta^G$ and $\phi:=\phi^G$.  Now suppose that
  $\eta_{-\ell}>0$ for some $\ell\le -3$.  According to \autoref{tab:eta} then $\ell=-3$ and
  $\eta=(d,d,d-3,0,1)$.  Let $z$ be the unique vertex in $V(G,-3)$.  We are aiming at a
  contradiction.

  By \autoref{prop:classificationOf0and-1}\eqref{prop:classificationOf0and-1:1} we may assume that
  $G=\conv\{ e_1,e_2,\ldots, e_d \}$ and (up to relabeling)
  \begin{align*}
    V(G,0)\ &=\ \SetOf{\phi(e_i)-e_i}{i\in[d]}&&\text{and}& V(G,-1)\ &=\ \{-e_1, -e_2, \dots,
    -e_{d-3}\}\,.
  \end{align*}

  In this situation \autoref{lem:phi} shows $\phi(\phi(e_i))=e_i$ for $1\le i\le d-3$ and hence
  $\smallSetOf{e_i}{i\in[d-3]} \subseteq \image \phi$.  The condition $v_P=\0$ tells us that
  \begin{equation}
    z\ + \ \sum_{i=1}^d \phi(e_i) \ +\ \sum_{i=1}^{d-3} -e_i \ =\ 0\,.\label{eq:prop:prop:novertexat-3:1}
  \end{equation}
  Hence, $z = -e_i-e_j-e_k$ for some $i,j,k\in [d]$.  Suppose that $j=k$.  Then the midpoint
  \[
  \frac{1}{2}z + \frac{1}{2}e_i \ = \ \frac{1}{2}(-e_i-2e_j)+\frac{1}{2}e_i \ = \ -e_j
  \]
  of the line segment between $z$ and $e_i$ is a non-zero lattice point in $P$ that is not a
  vertex. This contradicts terminality of $P$.  We conclude that the indices $i,j,k$ are pairwise
  distinct.  Two cases may occur.\smallskip

  \noindent\framebox{Let $z\ne-e_{d-2}-e_{d-1}-e_d$.}
  Choose $a\in\{d-2,d-1,d\}\setminus\{i,j,k\}$. Then $-e_a\not\in P$.  Hence, by
  Equation~\eqref{eq:prop:prop:novertexat-3:1} we know $\image \phi = \{ e_1,e_2,\ldots, e_{d-3} \}
  \cup \{ e_i,e_j,e_k \}$ and $e_a \notin \image \phi$. Thus, $x_a=0$ for every vertex $x\in V(G,0)
  \setminus \{ \phi(e_a)-e_a \}$.

  We conclude that $P$ is a skew bipyramid with apices $e_a$ and $\phi(e_a)-e_a$ over $Q=
  \smallSetOf{x\in P}{x_a=0}$, and $Q$ is a \str $(d{-}1)$-polytope
  with $3d-4$ vertices.  The face $H:=G\cap Q$ is a facet of $Q$.  With respect to $H$ the vertex
  $z\in Q$ is still on level $-3$. However, we can check using \Obro's classification given in
  \eqref{eq:obro:even}, \eqref{eq:obro:odd-not-skew}, and \eqref{eq:obro:odd-skew} that no facet
  (also the non-special ones) of a simplicial, terminal, and reflexive polytope with $3d-1$ vertices
  has a vertex on level $-3$. To check this, observe that it suffices to look at each summand of the
  given representation separately. Those are $P_5$, $P_6$, and the proper and skew bipyramid over
  $P_6$. Thus, there is no such polytope $Q$ that could serve as a basis for the skew bipyramid $P$.

  \smallskip

  \noindent\framebox{Let $z=-e_{d-2}-e_{d-1}-e_d$.}
  In this case Equation~\eqref{eq:prop:prop:novertexat-3:1} implies that $\phi$ is bijective.  We
  have $\phi(\phi(e_i))=e_i$ for $1\le i\le d-3$ as well as $\phi(e_i)\not\in\{ 0, e_i\}$ for all $i
  \in [d]$.  Up to relabeling we are left with three possibilities:
  \begin{itemize}
  \item $\phi(e_{d-2}),\phi(e_{d-1}),\phi(e_d) \in \{ e_1,e_2,\ldots, e_{d-3} \}$: Then $P$ is the
    direct sum of two polytopes $Q\subset \RR^{6}$ and $R \subset \RR^{d-6}$, where $Q$ is the
    convex hull of the 16 points
    \begin{gather*}
      e_1\,,\ e_2\,,\ \ldots\,,\ e_6\\
      \pm(e_1 - e_4)\,,\ \pm(e_2 - e_5)\,,\ \pm(e_3 - e_6)\\
      -e_1\,,\ -e_2\,,\ -e_3\\
      -e_4-e_5-e_6 \, .
    \end{gather*}
    However, $Q$ is not simplicial as the vector $(1,1,-1,0,1,-2)$ induces a facet with seven
    vertices.
  \item $\phi(e_{d-2}) \in \{ e_1,e_2,\ldots, e_{d-3} \}$, $\phi(e_d)=e_{d-1}$, and
    $\phi(e_{d-1})=e_d$: Then $P$ is the direct sum of two polytopes $Q\subset \RR^{4}$ and $R
    \subset \RR^{d-4}$, where $Q$ is the convex hull of the ten points
    \[
       e_1\,,\ e_2\,,\ e_3\,,\ e_4\,;\quad
       \pm(e_1 - e_2)\,,\ \pm(e_3 - e_4)\,;\quad
       -e_1\,;\quad -e_2-e_3-e_4 \,.
       % the "\quad" separate the various levels
    \]
    Again $Q$ is not simplicial as the vector $(-1,-2,0,1)$ induces a facet with four vertices.
  \item $\phi(e_{d-2})=e_{d-1}$, $\phi(e_{d-1})=e_d$, and $\phi(e_d)=e_{d-2}$: Then $P$ is the
    direct sum of two polytopes $Q\subset \RR^{3}$ and $R \subset \RR^{d-3}$, where $Q$ is the
    convex hull of the ten points
    \[
       e_1\,,\ e_2\,,\ e_3\,;\quad
       \pm(e_1 - e_2)\,,\ \pm(e_2 - e_3)\,,\ \pm(e_3 - e_1)\,;\quad
       -e_1-e_2-e_3 \, .
       % the "\quad" separate the various levels
    \]
    For $u=(-1, 2, -4)$ the inequality $\langle u,x\rangle \le 3$ induces a triangular facet of $Q$
    with vertices $-e_1+e_2$, $-e_3+e_1$, and $-e_1-e_2-e_3$.  This means that $Q$ is not
    reflexive. \qedhere
  \end{itemize}
\end{proof}

In view of Table~\ref{tab:eta} the preceding result leaves $(d,d,d-4,2)$, $(d,d-1,d-2,1)$ or
$(d,d-2,d)$ as choices for the $\eta$-vector of any special facet. As \autoref{prop:centrallySymmetric} and \autoref{cor:centrally_symmetric} deal with the case that every facet has $\eta$-vector $\eta=(d,d-2,d)$ only two cases remain.

We will look at the situation where $v_P=\0$ and there is a special facet $F$ with $\eta=(d,d-1,d-2,1)$. Since
$\eta_{0}=d-1$ we know that there is at least one vertex $v\in F$ with $\phi(v)=\0$. Otherwise the opposite vertex
of $v$ would be $\phi(v) -v \in V(F,0)$ for every choice of $v$ which would imply $\eta_{0}=d$. One technical
difficulty in our proof is that we will have to distinguish whether the vertex $\opp(F,v)$ lies on level $0$, $-1$
or $-2$ with respect to $F$. If $\opp(F,v)$ lies on a level below $0$, all the other vertices $w$ of $F$ are good
with $\phi(w)\ne\0$.  These cases are easier, and we will deal with them first. The remaining case where $\eta =
(d,d,d-4,2)$ will be reduced to one of the cases above.

\begin{lemma}\label{lem:dd-1d-21a}
  Let $P$ be a \str $d$-polytope with exactly $3d-2$ vertices such that $v_P=\0$ and there is a
  special facet $F$ with $\eta=(d,d-1,d-2,1)$.  If the unique vertex at level $-2$ with respect to
  $F$ is opposite to some vertex in $F$ then $P$ is lattice equivalent to a skew bipyramid over
  a $(d-1)$-dimensional smooth Fano polytope with $3d-4$ vertices.
\end{lemma}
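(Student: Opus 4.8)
The plan is to make the hypotheses explicit enough to exhibit $P$ as a skew bipyramid over one of the polytopes of \autoref{thm:3d-1}. Observe first that the assumption on the level-$(-2)$ vertex excludes case~\eqref{prop:classificationOf0and-1:2} of \autoref{prop:classificationOf0and-1} (there every opposite vertex lies in $V(F,0)$), so we are in case~\eqref{prop:classificationOf0and-1:3}: after a lattice transformation $F=\conv\{e_1,\dots,e_d\}$, the unique vertex $e_1$ of $F$ with $\phi(e_1)=\0$ satisfies $z:=\opp(F,e_1)\in V(F,-2)$, while for $j\ge 2$ one has $\opp(F,e_j)=\phi(e_j)-e_j\in V(F,0)$.

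First I would determine $z$ precisely. Write $G=\neigh(F,e_1)$. Since $z$ is a vertex of $G$, \autoref{lemma:2} applied at $x=z$ gives $(\langle u_G,e_1\rangle-1)\,z_1=3$; as $e_1\notin G$ and $G$ is a special facet with no vertex below level $-2$ (\autoref{prop:novertexat-3}), $\langle u_G,e_1\rangle\in\{-2,-1,0\}$, leaving $z_1\in\{-1,-3\}$. To rule out $z_1=-3$, note that for each $j\ge 2$ the relation $\opp(F,e_j)=\phi(e_j)-e_j\in V(F,0)$ forces $u_{\neigh(F,e_j)}=\1-e_j$ via \autoref{lemma:2}, whence $\langle u_{\neigh(F,e_j)},z\rangle=-2-z_j\ge -2$ gives $z_j\le 0$ — incompatible with $\sum_{j\ge 2}z_j=-2-z_1=1$. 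Hence $z_1=-1$, $u_G=\1-3e_1$, and the same inequalities yield $z_j\in\{-2,-1,0\}$ with $\sum_{j\ge2}z_j=-1$, so exactly one coordinate equals $-1$ and, after relabeling, $z=-e_1-e_2$.

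Now I would collect the structural consequences. Because $e_2$ and $z$ both lie in $G$ (so are not distant) while $e_2+z=-e_1$, \autoref{lemma:nill_sumDistant} together with terminality gives $-e_1\notin P$. Evaluating $u_G=\1-3e_1$ on the vertices $\phi(e_j)-e_j$ of $V(F,0)$ shows $e_1\notin\image\phi$ (otherwise such a vertex would sit at level $-3$ with respect to $G$), and evaluating it on the exceptional points $-2e_1-e_r+e_s+e_t$ permitted by \autoref{prop:classificationOf0and-1}\eqref{prop:classificationOf0and-1:3} shows none of these occurs; hence $V(F,-1)\subseteq\{-e_2,\dots,-e_d\}$, and since $\eta_{-1}=d-2$ exactly one $-e_m$ with $m\in\{2,\dots,d\}$ is absent. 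Expanding $v_P=\0$ over the four levels then gives $\sum_{j\ge2}\phi(e_j)=\sum_{k\ge2}e_k+e_2-e_m$. If $m=2$ this forces $\phi$ to map $\{e_2,\dots,e_d\}$ bijectively onto itself, which by \autoref{lem:phi} (applied at the indices $j$ with $-e_j$ a vertex) makes $\phi$ a fixed-point-free involution; then $P$ splits as a direct sum of copies of $P_6$ with one $3$-dimensional \str summand, supported on $\lin\{e_1,e_2,\phi(e_2)\}$, having $7$ vertices and vertex sum $\0$ — which cannot exist. Therefore $m\ge 3$, so $e_m\notin\image\phi$ and $\opp(F,e_m)=\phi(e_m)-e_m$; running through the vertex list one checks that $e_m$ and $\opp(F,e_m)$ are the only vertices with nonzero $m$-th coordinate, so $P=\conv\bigl(\{e_m,\phi(e_m)-e_m\}\cup Q\bigr)$ with $Q:=P\cap\{x_m=0\}$ is a skew bipyramid over $Q$ in the sense of \autoref{lem:skew-bipyramid}. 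Consequently $Q$ is \str with $3d-4=3(d-1)-1$ vertices, and \autoref{thm:3d-1} identifies it as a smooth Fano polytope, which is the claim.

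The step I expect to be the real obstacle is the exclusion of $m=2$: the other steps are bookkeeping with \autoref{lemma:2} and the admissible $\eta$-vectors, but here one must recognize the forced direct-sum decomposition and then rule out a $3$-dimensional \str polytope with $7$ vertices and vanishing vertex sum — a fact that in this low dimension has to be checked by hand (it is the $d=3$ instance of the classification being proved).
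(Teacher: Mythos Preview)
Your argument is correct in outline and agrees with the paper through the determination of $z=-e_1-e_2$ and the reduction $V(F,-1)\subseteq\{-e_2,\dots,-e_d\}$; your derivation of $z$ via \autoref{lemma:2} and \autoref{prop:novertexat-3} is slightly more elaborate than the paper's but reaches the same conclusion, and your observation that $e_1\notin\image\phi$ (which you read off from $u_G=\1-3e_1$) matches what the paper obtains by summing first coordinates in $v_P=\0$.

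Where you diverge is in the choice of apices. Having established that $e_1$ and $z=-e_1-e_2$ are the only vertices with nonzero first coordinate, the paper stops: $P$ is a skew bipyramid with apices $e_1$ and $z$ over $Q=P\cap\{x_1=0\}$, and \autoref{thm:3d-1} identifies $Q$. You instead search for apices of the form $e_m,\,\phi(e_m)-e_m$ with $m\ge2$, which forces the extra case distinction on $m$. The case $m=2$ that you flag as ``the real obstacle'' is thus entirely an artifact of your choice of apices; the paper's route never meets it.

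As written, your handling of $m=2$ is a genuine gap: you assert that a $3$-dimensional \str polytope with $7$ vertices and vertex sum $\0$ ``cannot exist'' and defer this to a check by hand, noting it is the $d=3$ instance of the theorem under proof---which is circular. The gap is fillable without circularity: the specific polytope arising is $\conv\{e_1,e_2,e_3,\pm(e_3-e_2),-e_3,-e_1-e_2\}$, and the linear functional $e_1-2e_2-e_3$ attains its maximum~$1$ on the four affinely dependent vertices $e_1,\,e_3-e_2,\,-e_3,\,-e_1-e_2$, so this polytope is not simplicial. But the cleaner fix is simply to use $(e_1,z)$ as the apex pair and drop the search for~$m$ altogether.
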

\begin{proof}
  By \autoref{prop:classificationOf0and-1}\eqref{prop:classificationOf0and-1:3} we may assume $F =
  \conv\{ e_1, e_2, \ldots, e_d \}$, and, up to relabeling, $z := \opp(F,e_1) \in V(F,-2)$ as well
  as (with $\phi:=\phi^F$)
  \begin{align}
    V(F,0)\ &=\ \SetOf{\phi(e_i)-e_i}{i\in[d]\setminus\{ 1 \}}\notag\\
    V(F,-1)\ &=\ \SetOf{-e_i}{i\in[d]} \;\cup\; \SetOf{-2e_1-e_r+e_s+e_t}{r,s,t\in[d] \text{ p.d.},\, r\ne 1}\label{eq:dd-1d-21a:1}
  \end{align}

  If $z_i=\langle u_{F,e_i}, z \rangle$ is positive for some $i\ge 2$, then $z\in
  V(\neigh(F,e_i),-3)$. But $\neigh(F,e_i)$ is special, so this contradicts
  \autoref{prop:novertexat-3}.  Moreover, $z=\opp(F,e_1)$ implies $z_1<0$ by
  \autoref{lemma:2}. Hence $z=-e_1 -e_k$ for some $k\in[d]$, and $k\ne 1$ as $z$ is primitive.  Up
  to relabeling we may assume that $k=2$. The standard facet normal of the facet $F':=\neigh(F,e_1)$
  is $u_{F'}=(-2,1,\ldots, 1)$. Evaluating this on the right hand side of
  Equation~\eqref{eq:dd-1d-21a:1} shows that $V(F,-1)$ is already contained in the reduced set
  \[
  V(F,-1)\ \subseteq\ \{ -e_2,\, -e_3,\, \ldots,\, -e_d \}\, .
  \]
  Using the fact that $v_P=0$ we know that, in particular, the first coordinates of all vertices sum
  to zero.  We conclude that the map $\phi$ does not attain the value $e_1$. Thus, $P$ is a skew
  bipyramid with apices $e_1$ and $z=-e_1-e_2$ over the polytope $Q=\smallSetOf{y\in P}{y_1=0}$.
  Those are classified in \autoref{thm:3d-1}.
\end{proof}

\begin{example}\label{example:B-again}
  The facet $H=\conv\{e_2, e_3, e_4, e_2-e_1\}$ of the $4$-polytope $B$ from \autoref{example:eta}
  has facet normal $u_H=-e_2-e_3-e_4=\1-e_1$. Hence, it satisfies the conditions of
  \autoref{lem:dd-1d-21a}: the unique vertex $e_1-e_2-e_3$ at level $-2$ with respect to $H$ is
  opposite to $e_3$.  As pointed out previously, $B$ is a skew bipyramid over~$P_5$.
\end{example}

The following gives a, slightly technical, sufficient condition for the existence of a pair of
distant vertices, unless the polytope is a skew bipyramid.

\begin{lemma}\label{lem:dd-1d-21b}
  Let $P$ be a \str $d$-polytope with exactly $3d-2$ vertices such that $v_P=\0$ and $F = \conv\{
  e_1, e_2, \ldots, e_d \}$ is a special facet with $\eta = (d, d-1, d-2,1)$. If there exist
  pairwise distinct indices $r,s,t\in[d]$ with $\opp(F,e_r) = -e_r-e_s+e_t$ then the vertices $e_r$
  and $\opp(F,e_r)$ are distant, or $P$ is a skew bipyramid over some smooth Fano $(d{-}1)$-polytope
  with $3d-4$ vertices.
\end{lemma}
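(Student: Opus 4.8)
The plan is to recast the distance condition in terms of $\phi^F$ and then, in the non-distant case, to exhibit $P$ as a bipyramid. Put $z := \opp(F,e_r) = -e_r-e_s+e_t$. Since $z$ has level $-1$ with respect to $F$ it does not lie in $V(F,0)$, so $e_r$ must be the unique special vertex of $F$ in the sense of the case analysis of \autoref{prop:classificationOf0and-1}, and we are in situation~\eqref{prop:classificationOf0and-1:3}; in particular $\phi^F(e_i)\ne\0$ for every $i$ with $e_i\ne e_r$ (the remark before \autoref{lem:dd-1d-21a}) and $V(F,0)=\SetOf{\phi^F(e_i)-e_i}{e_i\ne e_r}$. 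Because $e_r\ne-z$ and $P$ is terminal, \autoref{lemma:nill_sumDistant} gives that $e_r$ and $z$ are distant precisely when $e_r+z=e_t-e_s\in\Vert P$. As $e_t-e_s$ sits on level $0$, it is a vertex if and only if it coincides with some $\phi^F(e_i)-e_i$, which by a support comparison happens exactly when $\phi^F(e_s)=e_t$. So it remains to assume $\phi^F(e_s)\ne e_t$ and to deduce that $P$ is a skew bipyramid over a smooth Fano $(d{-}1)$-polytope with $3d-4=3(d-1)-1$ vertices.

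Using \autoref{prop:classificationOf0and-1}\eqref{prop:classificationOf0and-1:3} I would fix coordinates with $F=\conv\{e_1,\dots,e_d\}$, $r=1$, and (after relabelling) $s=2$, $t=3$, so that $z=-e_1-e_2+e_3$, $\phi(e_i)=:e_{k_i}$ with $k_i\ne i$ for $i\ge2$, and $k_2\ne3$. Expanding $v_P=\0$ over the vertex lists $V(F,1)=\{e_1,\dots,e_d\}$, $V(F,0)=\SetOf{e_{k_i}-e_i}{i\ge2}$, the remaining $d-3$ vertices at level $-1$, and the single vertex at level $-2$, produces one linear identity to be exploited repeatedly. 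The core is a case analysis on the map $i\mapsto k_i$ and on the low-level vertices: since $v_P=\0$ every facet is special, so \autoref{prop:novertexat-3} forbids level $\le-3$ and by \autoref{tab:eta} every special facet carries at most two vertices below level $-1$; evaluating the normals $u_{\neigh(F,e_i)}=\1-e_i$ (for $i\ge2$) and $u_{\neigh(F,e_1)}=\1-2e_1$ on the known vertices then limits how often each value is attained by $\phi$ and which candidate points of \autoref{prop:classificationOf0and-1}\eqref{prop:classificationOf0and-1:3} actually occur. The expected conclusion is that some coordinate $j$ (necessarily $j\notin\{1,2,3\}$, since $z$ has nonzero first three coordinates) is such that $e_j$ and $\opp(F,e_j)=e_{k_j}-e_j$ are the only two vertices off the hyperplane $\{x_j=0\}$, at heights $+1$ and $-1$; then $P=\conv(\{e_j,\,e_{k_j}-e_j\}\cup Q)$ with $Q=P\cap\{x_j=0\}$, and since $e_{k_j}=(e_{k_j}-e_j)+e_j$ is a vertex of $Q$ the relative-interior requirement in the definition of a skew bipyramid holds, so \autoref{lem:skew-bipyramid} (together with \autoref{lem:Fano=Bipyramid_over_Fano}) makes $Q$ \str with $3d-4$ vertices, hence smooth Fano by \autoref{thm:3d-1}. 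In any branch where no such $j$ is available I would force a contradiction as in the proofs of \autoref{prop:noLevelm2} and \autoref{prop:specialisvertex}, by producing either a non-simplicial facet or a primitive vector whose associated valid inequality is tight on $d+1$ vertices, contradicting reflexivity.

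The hard part is exactly this bookkeeping: one must narrow down $\phi$ and the vertices of level $-1$ and $-2$ far enough to read off the bipyramid direction. The assumption $\phi^F(e_2)\ne e_3$ only intervenes indirectly --- it is equivalent to $e_3-e_2\notin\Vert P$, which is precisely what prevents $e_1$ and $z=-e_1-e_2+e_3$ from serving as the apices of the splitting, since a skew bipyramid with those apices over $Q=P\cap\{x_1=0\}$ would require $z+e_1=e_3-e_2$ to be a vertex of $Q$; hence the decomposition has to be detected in one of the remaining coordinate directions. Organising the case distinction so that every surviving configuration is visibly such a bipyramid while all others collapse via a reflexivity or simpliciality obstruction is where the bulk of the work lies.
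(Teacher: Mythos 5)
Your opening reduction is the same as the paper's: by \autoref{prop:classificationOf0and-1}\eqref{prop:classificationOf0and-1:3} you get the shape of $V(F,0)$, and by \autoref{lemma:nill_sumDistant} plus terminality, $e_r$ and $\opp(F,e_r)$ are distant exactly when $e_t-e_s\in\Vert{P}$, i.e.\ when $\phi^F(e_s)=e_t$. Up to that point you match the published argument. But from there on your text is a plan, not a proof: the entire content of the lemma in the case $\phi^F(e_s)\ne e_t$ is the case analysis you defer with phrases like ``the expected conclusion is'' and ``I would force a contradiction''. Concretely, what is missing is everything the paper actually does after the reduction: showing $\phi(e_2)\ne e_1$ (a facet-normal computation), relabelling $\phi(e_2)=e_4$, proving $\phi(e_3)\in\{e_1,e_2\}$ by rewriting $x=-e_1-e_2+e_3$ in the lattice basis of $\neigh(F,e_3)$ and invoking \autoref{prop:novertexat-3}, excluding $-e_1$ as a vertex and $-e_2$ from $V(F,-1)$, pinning down the level $-2$ vertex, and then, in each surviving configuration, either exhibiting the coordinate $j$ with $e_j\notin\image\phi$ (the bipyramid direction) or writing $P=Q\oplus P_6^{\oplus k}$ for an explicit low-dimensional $Q$ and producing an explicit normal vector showing $Q$ is not simplicial. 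None of these steps is routine bookkeeping that follows formally from $v_P=\0$ and \autoref{tab:eta}; without them the dichotomy claimed in the lemma is not established, so the proposal has a genuine gap.

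Two smaller points. First, your side remark that apices $e_1$ and $-e_1-e_2+e_3$ ``would require $e_3-e_2$ to be a vertex of $Q$'' misstates the definition of a skew bipyramid: it only requires the segment between the apices to meet $Q$ in its relative interior, so the midpoint $\tfrac12(e_3-e_2)$ need only be an interior point of $Q$; the actual obstruction to splitting in direction $1$ is that other vertices (those with $\phi(e_i)=e_1$, the point $-e_1$ if present, or a level $-2$ vertex involving $e_1$) may have nonzero first coordinate. Second, when you do find a direction $j$ with only two vertices off the hyperplane $\{x_j=0\}$, you still must check that the segment between them meets $P\cap\{x_j=0\}$ in the relative interior (in the paper this is immediate because the apices are $e_j$ and $\phi(e_j)-e_j$ with $\phi(e_j)$ a vertex of the base); your sketch asserts this but the verification belongs to the same omitted case analysis.
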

\begin{proof}
  We fix $\phi:=\phi^F$. Up to relabeling we may assume that $(r,s,t)=(1,2,3)$. Then $\phi(e_1)=\0$
  as $x:=\opp(F,e_1)=-e_1-e_2+e_3$ does not lie on level $0$. According to
  \autoref{prop:classificationOf0and-1} we get:
  \begin{equation}\label{eq:dd-1d-21b:V0-1}
    \begin{split}
      V(F,0)  \ &=\ \{ \phi(e_2)-e_2,\, \phi(e_3)-e_3,\, \dots,\, \phi(e_d)-e_d \}\\
      V(F,-1) \ &\subseteq \ \SetOf{-e_i}{i\in[d]}\cup \{ x \}
    \end{split}
  \end{equation}
  Let $z$ be the unique vertex at level $-2$.  \autoref{prop:novertexat-3} implies that $z$ only
  has non-positive coefficients. Otherwise we would look at the special facet $\neigh(F,e_i)$ with
  $i$ being the index of the positive coefficient. \autoref{lemma:2} tells us that in this case $z$
  must lie on a level below $-2$ with respect to $\neigh(F,e_i)$. So we can assume $z= -e_k -
  e_\ell$ for some indices $k, \ell$. Terminality and $e_k \in \conv\{ z, e_\ell -e_k\}$ implies
  $\phi(e_k)\ne e_\ell$. Similarly, $\phi(e_\ell)\ne e_k$.

  We give a brief outline of the proof.  By \autoref{lemma:nill_sumDistant}, the vertices $e_1$ and
  $\opp(F,e_1)$ are distant if and only if $e_1+\opp(F,e_1)=-e_2+e_3$ is a vertex of~$P$ and
  $\phi(e_2)=e_3$. So our goal is to show that if $\phi(e_2) \ne e_3$ the polytope $P$ is a skew
  bipyramid. We start out by reducing the possible choices for $\phi(e_2)$ and $\phi(e_3)$. In
  particular, we will show that $\phi(e_2) \ne e_1$ and $\phi(e_3) \in \{ e_1, e_2 \}$. Then we
  will distinguish between the two choices for $\phi(e_3)$. By counting and using that $v_P=\0$ we
  will be able to deduce the precise shapes of $V(F,-1)$ and $z$.  This will give a complete
  description of the polytope~$P$. A direct computation will finally show that $P$ is not
  simplicial, unless it is a skew bipyramid.

  Suppose $\phi(e_2)=e_1$.  Then $e_1-e_2=\phi(e_2)-e_2$ is a vertex of $P$. If $e_1$ and $x$ are
  not distant, then there is a facet $G$ that contains $e_1$ and $x$.  Its facet normal $u_G$
  satisfies $\langle u_G,e_1\rangle=\langle u_G,x\rangle=1$ and $\langle u_G,e_1-e_2\rangle, \langle
  u_G, e_3\rangle\le 1$, so
  \begin{align*}
    1\ge \langle u_G, e_1-e_2\rangle\ =\ \langle u_g, e_1\rangle + \langle u_G, x\rangle+\langle
    u_G, e_1-e_3\rangle \ =\ 1+1+1-\langle u_G, e_3\rangle \ \ge\ 2\,,
  \end{align*}
  a contradiction.  Hence, $\phi(e_2)\ne e_1$.

  If $\phi(e_2)=e_3$ then $e_1+x=-e_2+e_3=\phi(e_2)-e_2$, in which case $e_1$ and $x=\opp(F,e_1)$
  are distant due to \autoref{lemma:nill_sumDistant}.  So we may assume that $\phi(e_2)\ne e_3$, and
  up to relabeling we can set $\phi(e_2) = e_4$.

  Next we will we show that $\phi(e_3) \in \{ e_1,e_2 \}$. Consider the facet
  $F^{(3)}=\neigh(F,e_3)$.  Its vertices are $e_i$ for $i\ne 3$ and $\phi(e_3)-e_3$, so the vertices
  of $F^{(3)}$ form a lattice basis.  Writing $x$ in this basis gives
  \[
  x  \ = \ -e_1 -e_2 - (\phi(e_3) - e_3) + \phi(e_3)\, .
  \]
  Since $x \in V(F^{(3)}, -2)$ we know from \autoref{prop:novertexat-3} that $x$ has no positive
  coefficient with respect to this basis
  (as above, we can see that otherwise $x$ is at level $-3$ for any neighboring facet corresponding to a coordinate with positive
  coefficient). Hence, $\phi(e_3)$ must cancel with one of $-e_1$ or $-e_2$, so $\phi(e_3) \in \{
  e_1, e_2 \}$.

  We determine the vertices in $V(F,-1)$.  The normal vector of $F^{(1)}=\neigh(F,e_1)=\conv(x,e_2,
  \ldots, e_d)$ is $u_{F^{(1)}} = \1 - 2e_1$. Hence, if $-e_1\in \Vert P$, then $-e_1\in F^{(1)}$
  and $F^{(1)}$ would contain more than $d$ vertices. So $-e_1\not\in \Vert P$.  Now consider
  $F^{(2)}=\conv(e_1, e_4-e_2, e_3, \ldots, e_d)$. Then $u_{F^{(2)}, e_4}=e_2+e_4$, so
  \begin{align*}
    \langle u_{F^{(2)}, e_4}, x\rangle \ &=\ -1& \langle u_{F^{(2)}}, x\rangle \ =\ 0\,.
  \end{align*}
  By~\autoref{prop:nill:5.5}.(\ref{prop:nill:5.5:2}) this implies $\opp(F^{(2)}, e_4)=x$, and
  \begin{align*}
    F^{(2,4)} \ :=\ \neigh(\neigh(F,e_2), e_4) \ = \ \conv\left( \SetOf{e_i}{i\in[d]\setminus\{ 2,4
        \}} \cup \{ (e_4-e_2),\, x\}\right)\,.
  \end{align*}
  Hence, $u_{F^{(2,4)}} = \1 -2e_2 -e_4$. As $\langle u_{F^{(2,4)}}, -e_2 \rangle=1$ and
  $-e_2\not\in \Vert{F^{(2,4)}}$ we know that $-e_2\not\in V(F,-1)$.  For the remaining part of our
  proof we will distinguish between $\phi(e_3)=e_1$ and $\phi(e_3)=e_2$.  \smallskip

  \framebox{Let $\phi(e_3)= e_1$.} We list the vertices already known to have a non-vanishing first
  coordinate:
  \[
  e_1 \,, \quad -e_1 -e_2 + e_3 \,, \quad e_1-e_3\, .
  \]
  From $v_P = \0$ we learn that there must exist at least one more vertex with a negative first
  coordinate. By \eqref{eq:dd-1d-21b:V0-1} and $-e_1\not\in P$ the only possibility is $k=1$, which
  means $z=-e_1 -e_\ell$.  This also implies that $\phi(e_i) \ne e_1$ for $i\ne 3$.  Now we list the
  vertices already known to have a non-zero third coordinate:
  \[
  e_3 \,, \quad e_1 -e_3 \,, \quad -e_1 -e_2 +e_3 \, .
  \]
  This tells us that there must exist another vertex with a negative third coordinate. There are two
  possibilities; either $-e_3\in \Vert P$ or $\ell=3$, i.e.\ $z =-e_1 -e_3$.  As
  $-e_3=\tfrac{1}{2}((-e_1-e_3)+(e_1-e_3))$ and $\phi(e_3)=e_1$ the polytope $P$ is not terminal in
  the latter case.  So $k\ne 3$,  $-e_3\in\Vert P$, and there is exactly one index $j\ne 1,2,3$ such
  that
  \[
  V(F,-1) \ = \ \SetOf{-e_i}{i\in[d]\setminus\{ 1,2,j \}} \,\cup\, \{ x \}\, .
  \]

  We list all the vertices known to have a non-vanishing $j$-th coordinate:
  \[
  e_j \,, \quad \phi(e_j) - e_j\, .
  \]
  If $e_j\not\in\image \phi$ these are the only two vertices with this property, and hence $P$ is a
  skew bipyramid with apices $e_j$ and $\phi(e_j)-e_j$.  So in the following we may assume that $e_j
  \in \image \phi$. Hence, $\ell=j$, that is, $z= -e_1 -e_j$.  We will distinguish between $j=4$ and
  $j\ne 4$.  The aim is to write $P$ as a sum of two polytopes $P= Q\oplus R$ where we can show that
  $Q$ is not simplicial.

  Consider first the case $j=4$. By \autoref{lem:phi} $\phi(\phi(e_i))\in \{\0, e_i\}$ if $-e_1\in
  \Vert P$, so $\phi(e_i)\ne e_2$ for $i\ne 1,2,4$, as $\phi(e_2)=e_4$. As $v_P=\0$ implies $e_2\in
  \image\phi$ we deduce $\phi(e_4)=e_2$. By the same lemma, and using $\phi(e_i)\ne e_1$ for $i\ne
  3$ as well as $-e_i\in \Vert P$ for $i\ge 5$ we obtain $\phi(\phi(e_i))=e_i$ for $i\ge 5$. Hence,
  $\phi(e_i)\not\in \{ e_1,e_2,e_3,e_4 \}$. We can write $P = Q \oplus P_6^{\oplus \frac{d-4}{2}}$
  where $Q$ is the convex hull of the ten points:
  \begin{gather*}
    e_1 \,,\ e_2 \,,\ e_3 \,,\ e_4 \,,\\
    e_4-e_2 \,,\ e_1-e_3 \,,\ e_2 - e_4 \,,\\
    -e_1-e_2+e_3 \,,\ -e_3 \,,\\
    -e_1 -e_4 \, .
  \end{gather*}
  $Q$ is not simplicial as $u = (1,-1,1,-2)$ induces a facet which is not a simplex. Hence, $P$
  would not be simplicial.

  So we may assume that $j \ne 4$. So, up to relabeling  $j=5$. We list the vertices known to have a
  non-zero fourth coordinate:
  \[
  e_4 \,, \quad \phi(e_4) - e_4 \,,\quad e_4 - e_2 \,, \quad -e_4 \, .
  \]
  From \autoref{lem:phi} it follows that $\phi(\phi(e_4))=e_4$. So the only possible way to maintain
  $v_P = \0$ is that $\phi(e_4)=e_2$. We have already seen that $\phi(e_j)\ne e_5$ for $j\le 4$. By
  assumption, $e_5 \in \image \phi$, so there is some $j\ge 6$ such that $\phi(e_j) = e_5$. Now
  $-e_j \in \Vert P$ implies $\phi(e_5) = e_j$, so $j$ is unique and we can assume that $j=6$.  This
  leaves us with $P= Q \oplus P_6^{\oplus\frac{d-6}{2}}$, where $Q$ is the convex hull of
  \begin{gather*}
    e_1 \,,\ e_2 \,,\ e_3 \,,\ e_4 \,,\ e_5 \,,\ e_6 \,,\\
    e_4-e_2 \,,\ e_1-e_3 \,,\ e_2-e_4 \,,\ e_6-e_5 \,,\ e_5 - e_6\,,\\
    -e_1-e_2+e_3 \,,\ -e_3 \,,\ -e_4 \,,\ -e_6 \,,\\
    -e_1 -e_5\,.
  \end{gather*}
  The polytope $Q$ is not simplicial as the vector $u = (1,-2,0,-1,-2,-1)$ induces a facet which is
  not a simplex.  This is a contradiction to $P$ being simplicial, and this concludes the case
  $\phi(e_3)=e_1$.

  \framebox{Let $\phi(e_3)= e_2$.} The argument is similar to the other case. Using
  \autoref{lem:phi} $-e_3 \in \Vert P$ would imply $\phi(e_2) = \phi(\phi(e_3))= e_3$ which
  contradicts the assumption $\phi(e_2) = e_4$.  Hence,
  \[
  V(F,-1) \ = \ \SetOf{-e_i}{i\in[d]\setminus\{ 1,2,3 \}} \,\cup\, \{ x \}\, .
  \]
  We list the vertices known to have a non-vanishing third coordinate:
  \[
  e_3 \,,\quad e_2 -e_3 \,,\quad -e_1 -e_2 +e_3\, .
  \]
  Again $v_P=\0$ implies that $k=3$, that is, $z = -e_3 -e_\ell$ for some $\ell$ and $e_3
  \not\in\image\phi$. We have seen above that $\phi(e_3) = e_2$ implies $\ell\ne 2$.

  If $\phi(e_4) = e_2$, then, using $\ell \ne 2$ and $-e_2\not\in \Vert P$, we have the following
  vertices with a non-zero second coordinate:
  \[
  e_2  \,,\quad e_4 -e_2  \,,\quad e_2 - e_4\,.
  \]
  This violates $v_P = \0$, so $\phi(e_4)\ne e_2$.

  We consider $\phi(e_4) = e_1$ and $\phi(e_4) \ne e_1$ separately.  In the former case, by counting
  all vertices with a non zero first coordinate
  \[
  e_1  \,,\quad -e_1 -e_2 +e_3  \,,\quad e_1 -e_4\, ,
  \]
  we see that $\ell=1$, that is, $z=-e_1 -e_3$. Therefore, $P=Q \oplus P_6^{\oplus \frac{d-4}{2}}$
  for the convex hull $Q$ of the points
  \begin{gather*}
    e_1\,,\ e_2\,,\ e_3\,,\ e_4 \,,\\
    e_4-e_2\,,\ e_2-e_3\,,\ e_1 - e_4\,,\\
    -e_1-e_2+e_3\,,\ -e_4\,,\\
    -e_1 -e_3\, .
  \end{gather*}
  The polytope $Q$ is not simplicial as $u = (1, -1, 1, 0)$ induces a facet which is not a simplex.
  Again, this contradicts that $P$ is simplicial.

  We are left with $\phi(e_4)\ne e_1$. As argued above $\phi(e_4)\ne e_2$ and as
  $\phi(\phi(e_4))=e_4$ due to \autoref{lem:phi} we have that $\phi(e_4)\ne e_3$. So we may assume
  $\phi(e_4)=e_5$. Examining all known vertices with a non-zero fourth coordinate, namely,
  \[
  e_4  \,,\quad e_5 - e_4  \,,\quad e_4 -e_5  \,,\quad e_4 - e_2  \,,\quad -e_4 \, ,
  \]
  we see that $z=-e_3-e_4$.  We can split $P$ into the sum $Q\oplus P_6^{\oplus \frac{d-5}{2} }$,
  where $Q$ is the convex hull of
  \begin{gather*}
    e_1\,,\ e_2\,,\ e_3\,,\ e_4\,,\ e_5\,,\\
    e_4-e_2\,,\ e_2-e_3\,,\ e_5 - e_4\,,\ e_4 -e_5\,,\\
    -e_1-e_2+e_3\,,\ -e_4\,,\ -e_5\,,\\
    -e_3 -e_4\,.
  \end{gather*}
  The polytope $Q$ is not simplicial as $u = (1,1,0,0,-1,0)$ induces a facet which is not a
  simplex. Once again, this is a contradiction to $P$ being simplicial. This concludes the case
  $\phi(e_3) \ne e_1$ and the entire proof.
\end{proof}

The following strengthens \autoref{lem:dd-1d-21a} by slightly relaxing the preconditions.

\begin{proposition}\label{prop:dd-1d-21}
  Let $P$ be a \str $d$-polytope with exactly $3d-2$ vertices such that $v_P=\0$. Suppose that $P$
  has a (special) facet $F$ with $\eta^F=(d,d-1,d-2,1)$ and there is a vertex $v$ of $F$ such that
  $\opp(F,v) \not\in V(F,0)$.  Then $P$ is a (possibly skew) bipyramid over some \str polytope of
  dimension $d-1$ with $3d-4$ vertices.
\end{proposition}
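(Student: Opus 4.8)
The plan is to run a case distinction on the level of $\opp(F,v)$ and reduce each case to \autoref{lem:dd-1d-21a} or \autoref{lem:dd-1d-21b}. Because $\eta^F_0=d-1$, \autoref{cor:opposite_at_0} forces the $d-1$ vertices of $V(F,0)$ to be opposite to pairwise distinct vertices of $F$; hence at most one vertex of $F$ has its opposite outside $V(F,0)$, and by hypothesis exactly one does. Relabelling it $e_1$, the remaining $d-1$ vertices of $F$ satisfy the hypothesis of \autoref{lem:one_vertex_at_0} (neither they nor $e_1$ share an opposite with any of them), so they are good with $\phi^F(e_i)\ne\0$ for $2\le i\le d$, while $\phi^F(e_1)=\0$. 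Then \autoref{prop:classificationOf0and-1}\eqref{prop:classificationOf0and-1:3} puts $P$ in the standard form there: $F=\conv\{e_1,\dots,e_d\}$, $V(F,0)=\{\phi^F(e_i)-e_i:2\le i\le d\}$, and $V(F,-1)\subseteq\{-e_i:i\in[d]\}\cup\SetOf{-2e_1-e_r+e_s+e_t}{r,s,t\in[d]\ \text{p.d.},\ r\ne 1}$. As $v_P=\0$, \autoref{prop:novertexat-3} excludes vertices below level $-1$ apart from the unique one in $V(F,-2)$, so $z:=\opp(F,e_1)$ lies in $V(F,-1)$ or $V(F,-2)$. If $z\in V(F,-2)$, then $z$ is the unique level-$(-2)$ vertex, it is opposite to $e_1\in\Vert F$, so \autoref{lem:dd-1d-21a} applies and $P$ is a skew bipyramid over a $(d{-}1)$-dimensional smooth Fano polytope with $3d-4=3(d-1)-1$ vertices, which is moreover smooth Fano by \autoref{lem:skew-bipyramid} and \autoref{thm:3d-1}.

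So assume $z\in V(F,-1)$; then $z=-e_1-e_r+e_t$ with $r,t\ge2$ distinct, or $z=-2e_1-e_r+e_s+e_t$ with $r,s,t\ge2$ pairwise distinct. In both cases pass to the special facet $G:=\neigh(F,e_1)=\conv(\{e_2,\dots,e_d\}\cup\{z\})$; since the ridge $\conv\{e_2,\dots,e_d\}$ lies only in $F$ and $G$, we have $\opp(G,z)=e_1$. If $z$ has first coordinate $-1$, then $e_1=-z-e_r+e_t$ exhibits $\Vert G$ as a lattice basis; with $u_G=\1-2e_1$ a level count of the $3d-2$ vertices gives $\eta^G_{-1}=d-2$, hence $\eta^G=(d,d-1,d-2,1)$, and in the coordinates of $G$ the vertex $\opp(G,z)=e_1$ has the form $-e_{r'}-e_{s'}+e_{t'}$ with pairwise distinct indices. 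Thus \autoref{lem:dd-1d-21b} applies to $G$: either $P$ is a skew bipyramid over a smooth Fano $(d{-}1)$-polytope with $3d-4$ vertices, or $z$ and $e_1$ are distant; in the latter case $z+e_1\in\partial P$ by \autoref{lemma:nill_sumDistant} is a vertex at level $0$ with respect to $F$, which forces $\phi^F(e_r)=e_t$, and a short argument modelled on the proof of \autoref{lem:dd-1d-21b} (add the distant pair, use terminality, produce a long coordinate direction or a non-simplex facet) closes this branch. If instead $z$ has first coordinate $-2$, then $\Vert G$ is not a lattice basis (e.g.\ $e_1=-\tfrac12 z-\tfrac12 e_r+\tfrac12 e_s+\tfrac12 e_t$), so by \autoref{lem:eta_0_d-1_smooth} $\eta^G_0\le d-2$, which with \autoref{prop:novertexat-3} and Table~\ref{tab:eta} forces $\eta^G=(d,d-2,d)$, $u_G=\1-e_1$. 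A count against $u_G$ then shows that at least two indices $j\ge2$ satisfy $\phi^F(e_j)=e_1$ and that the first coordinate of the unique vertex $w\in V(F,-2)$ lies in $\{-1,-2\}$. Picking two such indices $j_1,j_2$ and applying \autoref{lem:2times_neighboring} yields a facet $\conv((\Vert F\setminus\{e_{j_1},e_{j_2}\})\cup\{e_1-e_{j_1},e_1-e_{j_2}\})$ whose vertices form a lattice basis, and a case analysis on which of $e_1,e_s,e_t$ lie in $\image\phi^F$ and on $w$ either produces a splitting $P=Q\oplus P_6^{\oplus k}$ with $Q$ manifestly not simplicial (or not reflexive) --- a contradiction --- or displays the desired (skew) bipyramid.

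I expect the level $-1$ case, and within it the configuration $z=-2e_1-e_r+e_s+e_t$, to be the main obstacle: it is the least rigid case, with no built-in lattice-basis or antipodal structure, so eliminating it (or recognising the bipyramid) requires the same bookkeeping --- keeping track of $\phi^F$, of the antipodes $-e_i$ present in $P$, and of the level-$(-2)$ vertex --- that makes the proof of \autoref{lem:dd-1d-21b} lengthy. The benefit of already having \autoref{lem:dd-1d-21a} and \autoref{lem:dd-1d-21b} available is that almost every branch of this analysis then terminates immediately. In every branch the resulting base has $3d-4=3(d-1)-1$ vertices, so it is classified by \autoref{thm:3d-1}, is smooth Fano, and hence so is $P$ by \autoref{lem:skew-bipyramid} and \autoref{lem:Fano=Bipyramid_over_Fano}.
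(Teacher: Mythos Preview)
Your overall plan --- split on the level of $y:=\opp(F,e_1)$ and, at level $-1$, on the first coordinate of $y$ --- matches the paper.  But three points keep your sketch from being a proof.

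\textbf{The case $y=-e_1$ is missing.}  From \autoref{prop:classificationOf0and-1}\eqref{prop:classificationOf0and-1:3} the level-$(-1)$ candidates with $y_1<0$ are $-e_1$, $-e_1-e_r+e_t$ (i.e.\ $-2e_1-e_r+e_s+e_t$ with $s=1$), and $-2e_1-e_r+e_s+e_t$ with $s,t\ne1$.  You list only the last two.  The paper treats $y=-e_1$ first: then $u_{\neigh(F,e_1)}=\1-2e_1$, so no further vertex in $V(F,-1)$ has negative first coordinate, $V(F,-1)=\{-e_1,\dots,-e_{d-2}\}$ up to relabeling, and $v_P=\0$ forces some $e_k\notin\image\phi$; this gives the bipyramid directly.

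\textbf{For $y=-e_1-e_r+e_t$, apply \autoref{lem:dd-1d-21b} to $F$, not to $G$.}  The hypothesis of that lemma is exactly $\opp(F,e_1)=-e_1-e_r+e_t$, so the pass to $G=\neigh(F,e_1)$ and the verification of $\eta^G$ are unnecessary.  More importantly, once the lemma returns ``$e_1$ and $y$ distant'' (hence $\phi(e_2)=e_3$ after relabeling), you still owe a genuine argument, not a ``short argument modelled on'' the lemma.  In the paper this is a half-page: one shows $u_{\neigh(F,e_1)}=\1-2e_1$, deduces the precise shape of $V(F,-1)$, and then either $z_1=0$ forces $e_1\notin\image\phi$ (bipyramid), or $z=-e_1-e_\beta$ and a coordinate count via $v_P=\0$ pins down $\beta$ and again produces an index outside $\image\phi$.

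\textbf{For $y=-2e_1-e_r+e_s+e_t$, your route via $\eta^G$ is unfinished and diverges from the paper.}  Your observation that $\Vert G$ has determinant $\pm2$, hence $\eta^G_0\le d-2$ and $\eta^G=(d,d-2,d)$, is correct and nice, and it does force at least two indices $j$ with $\phi(e_j)=e_1$.  But from there you only promise ``a case analysis'' that either splits off a bad $Q$ or finds a bipyramid; nothing is carried out, and in fact this case does \emph{not} occur at all.  The paper's argument is direct and short: first, if $\phi(e_r)\ne e_1$ one computes $u_{\neigh(\neigh(F,e_1),e_r)}=2\cdot\1-e_1-2e_r$, contradicting reflexivity; so $\phi(e_r)=e_1$.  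Then at $F^{(r)}=\neigh(F,e_r)$ one checks $y\in V(F^{(r)},0)$ and $\langle u_{F^{(r)},e_i},y\rangle\ge0$ for every vertex $e_i\ne e_1$ of $F^{(r)}$, so \autoref{prop:nill:5.5}\eqref{prop:nill:5.5:3} gives $e_1$ and $y$ distant; hence $e_1+y=-e_1-e_r+e_s+e_t$ is a vertex at level $0$ with respect to $F$, which is impossible since it is not of the form $\phi(e_i)-e_i$.
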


\begin{proof}
  If $\opp(F,v) \in V(F,-2)$ for some $v\in \Vert F$ we can apply \autoref{lem:dd-1d-21a} to prove
  the claim.  So we may assume that there exists a vertex $v$ in $F$ with $\opp(F,v) \in V(F,-1)$.
  By \autoref{prop:classificationOf0and-1} we may assume $v=e_1$ and (using $\phi:=\phi^F$)
  \begin{align*}
    \begin{aligned}
      V(F,0) \ &= \ \{\phi(e_2)-e_2,\, \phi(e_3)-e_3,\, \ldots, \phi(e_d)-e_d \}\\
      V(F,-1)\ &\subseteq\ \{-e_1, \ldots, -e_d\}\;\cup\; \SetOf{-2e_1-e_r +e_s+e_t}{r,s,t \in [d]
        \text{ p.d.},\, r\ne 1}\,. % " " before "p.d." on purpose
    \end{aligned}
  \end{align*}
  By our assumption the vertex $y:= \opp(F,e_1)$ is at level $-1$ with respect to $F$.  We see that
  it is one of the points
  \begin{align}
    -e_1,  \,,\quad -e_1-e_r+e_s,  \,,\quad \text{or} \quad -2e_1 - e_r + e_s + e_t \,,\label{eq:fourchoices}
  \end{align}
  for pairwise distinct $r,s,t$ and $r\ne 1$. So $u_{F^{(1)}}=\1-2e_1$ for the first two possibilities,
  $u_{F^{(1)}}=\1-e_1$ for the third, and $u_{F^{(j)}}=\1-e_j$ for any $j\ge 2$.

  Let $z$ be the unique vertex in $V(F,-2)$.  If $z_i=\langle u_{F,e_i}, z \rangle$ is positive for
  some $i$, then $z\in V(\neigh(F,e_i),-3)$. But $\neigh(F,e_i)$ is special, so this contradicts
  \autoref{prop:novertexat-3}.  We know that $z\ne -2e_i$ for all $i$ as the vertices must be
  primitive lattice vectors.  Hence, $z=-e_\alpha -e_\beta$ for distinct $\alpha,\beta \in [d]$.  We
  consider all possible forms of $y=\opp(F,e_1)$ according to \eqref{eq:fourchoices}
  separately.\smallskip

  \framebox{Let $y=-e_1$.} Then we have $u_{F^{(1)}}=\1-2e_1$, and this implies that no other vertex
  in $V(F,-1)$ can have a negative first coefficient.  So, up to relabeling,
  $V(F,-1)=\{-e_1,-e_2,\dots, -e_{d-2}\}$.  Choose $k \in \{ 1,d-1,d \}\setminus \{ \alpha,\beta
  \}$.  The fact that $v_P=\0$ implies that $e_k\not\in\image\phi$.  Hence, $P$ is a (possibly skew)
  bipyramid with apices $e_k$ and $\opp(F,e_k)$.\smallskip

  \framebox{Let $y=-e_1-e_r+e_s$.} We may assume that $r=2$ and $s=3$. By \autoref{lem:dd-1d-21b}
  the vertices $e_1$ and $y=\opp(F,e_1)$ are distant, and thus \autoref{lemma:nill_sumDistant} gives
  us that $e_1+y=-e_2+e_3$ is a vertex of $P$.  This means that $\phi(e_2) =
  e_3$. $u_{F^{(1)}}=\1-2e_1$ implies that no other vertex in $V(F,-1)$ can have a negative first
  coefficient. Hence, there are distinct $i,j \in [d]\setminus\{ 1 \}$ such that
  \[
  V(F,-1)\ =\ \SetOf{ -e_k }{ k \in [d] \text{ with } k\ne 1,i,j } \cup \{ y \}\,.
  \]
  If $z_1=0$ then $v_P=\0$ implies $e_1\not\in\image\phi$. In this case $P$ is a skew bipyramid over
  the $(d{-}1)$-polytope $Q=\smallSetOf{x\in P}{x_1=0}$ with apices $e_1$ and $-e_1-e_2+e_3$.

  It remains to consider that case that $\alpha=1$ and $e_1 \in \image\phi$.  This yields
  $\phi(e_\beta)\ne e_1$ as otherwise $-e_\beta =\frac12 ( z + (\phi(e_\beta)-e_\beta) )$ is a non-zero
  lattice point in $P$ which is not a vertex.  This way we obtain
  \begin{equation}\label{eq:sum0:A}
    \begin{split}
      \0\ &=\ v_P \\
      &= \ \sum_{k=1}^d e_k \, + \, \sum_{k=2}^d(\phi(e_k)-e_k) \, - \, \sum_{k\ne 1,i,j} e_k \, + \, y \, + \, z\\
      &= \ \sum_{k=2}^d(\phi(e_k)-e_k) \, + \, e_i + e_j - e_\beta - e_1 - e_2 + e_3 \\
      &= \ \sum_{k=4}^d(\phi(e_k)-e_k) \, + \, \phi(e_3) + e_i + e_j - e_\beta - e_1 - 2e_2 + e_3
      \, ,
    \end{split}
  \end{equation}
  where the  last step uses $\phi(e_2)=e_3$.  Now the vanishing of the third
  coordinate on the right hand side of \eqref{eq:sum0:A} forces $\beta=3$.  This equation then
  simplifies to
  \begin{equation}\label{eq:sum0:B}
    \0 \ = \ \sum_{k=3}^d \phi(e_k) \, + \, e_i + e_j \,-\, \sum_{k\ne 3}e_k  \,-\, e_2 \, .
  \end{equation}
  Note that it is not possible for $i$ or $j$ to be equal to $3$, otherwise the third coordinate of
  the right hand side is strictly positive.  Vanishing of the $i$-th coordinate on the right hand
  side of \eqref{eq:sum0:B} implies the following: If $i \ne 2$ then $i\not\in\image\phi$.  In this
  case $P$ is a skew bipyramid with apices $e_i$ and $\phi(e_i)-e_i$.  The same reasoning holds for
  $j$.  Since $i$ and $j$ are distinct at least one of them must differ from $2$.  So this case
  leads to a skew bipyramid.  \smallskip

  \begin{figure}[tb]
		\begin{tikzpicture}[scale=1.5]
		  \tikzstyle{edge} = [draw,thick,-,black]
		
		  \coordinate (o) at (0,0);
		  \coordinate[label={right:$\scriptstyle y$}] (y) at (1,0);
		  \coordinate[label={below right:$\scriptstyle e_c$}] (ec) at (-60:1);
		  \coordinate[label={below left:$\scriptstyle e_1$}] (e1) at (-120:1);
		  \coordinate[label={left:$\scriptstyle \opp(F,e_c)$}] (oppFec) at (-1,0);
		  \coordinate[label={above right:$\scriptstyle \opp(F^{(1)},e_c)$}] (oppF1ec) at (60:1);
		  \coordinate[label={above left:$\scriptstyle \opp(F^{(c)},e_1)$}] (oppF2e1) at (120:1);
		
		  \draw[edge] (o) -- (oppF2e1) -- (oppFec) -- (e1) -- (ec) -- (y) -- (o) -- (oppFec);
		  \draw[edge] (e1) -- (o) -- (ec);
		  \draw[edge] (o) -- (oppF1ec) -- (y);

		  \foreach \point in {o,v,ec,e1,oppFec,oppF1ec,oppF2e1}
		    \fill[black] (\point) circle (1.3pt);
		
		  \draw (-90:1.732/3) node {$\scriptstyle F$};
		  \draw (-90+60:1.732/3) node {$\scriptstyle F^{(1)}$};
		  \draw (-90-60:1.732/3) node {$\scriptstyle F^{(c)}$};
		\end{tikzpicture} 
    \caption{Facet $F$ and neighboring facets for $y=-2e_1-e_r+e_s+e_t$}
    \label{fig:several_facets}
  \end{figure}
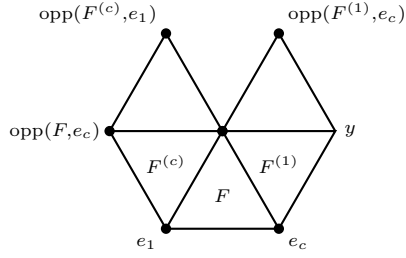

  \framebox{Let $y=-2e_1-e_r+e_s+e_t$.}  We will show that this case cannot occur.  The situation is
  sketched in \autoref{fig:several_facets}. The facet $F^{(1)}:=\neigh(F,e_1)$ has standard facet
  normal $u_{F^{(1)}} = \1 -e_1$.

  Assume first that $\phi(e_r)\ne e_1$.  Then $\opp(F,e_r)=e_i-e_r$ for some $i\ne1,r$, and
  \begin{equation}\label{eq:opp_F_er}
    \langle u_{F^{(1)}}, \opp(F,e_r) \rangle \ = \ \langle \1 - e_1, e_i-e_r \rangle \ = \ 1-1 \ = \ 0 \, .
  \end{equation}
  Furthermore, $u_{F^{(1)},e_r} = e_r - \frac{1}{2} e_1$, and so
  \[
  \langle u_{F^{(1)},e_r}, \opp(F,e_r) \rangle \ = \ \langle e_r-\frac{1}{2}e_1,e_i-e_r\rangle \ =
  \ -1 \, .
  \]
  Since $\langle u_{F^{(1)},e_r}, \opp(F,e_r) \rangle$ is negative,
  \autoref{prop:nill:5.5}.(\ref{prop:nill:5.5:2}) tells us that $\opp(F,e_r)$ is the opposite vertex
  of $e_r$ with respect to $F^{(1)}$, so $\opp(F^{(1)},e_r) = \opp(F,e_r)$.  Using this we conclude
  that $\langle 2\cdot\1 - e_1-2e_r, x\rangle\le 2$ defines the facet of $\neigh(F^{(1)},e_r)$. This
  contradicts the assumption that $P$ is reflexive.  Hence, in the following we can assume that
  $\phi(e_r)=e_1$.

  The facet $F^{(r)}:=\neigh(F,e_r)$ has vertices $\smallSetOf{e_i}{i\ne r}$ and $e_1-e_r$,
  and facet normal $u_{F^{(r)}} = \1 - e_r$.  We want to show that for all vertices $x\ne e_1$ on
  $F^{(r)}$ the opposite vertex $\opp(F^{(r)},x)$ does not coincide with $y$.  Trivially we have
  $\opp(F^{(r)},\opp(F,e_r)) = e_r\ne y$.  It remains to check the vertices $e_i$ for $i\ne r$.  The
  dual basis is
  \begin{align*}
      u_{F^{(r)},\opp(F,e_r)}\ &=\ -e_r\,,&   u_{F^{(r)},e_1}\ &=\ e_1+e_r\,,&   u_{F^{(r)},e_i}\ &=\ e_i\quad\text{ for } i\ne 1,r\,.
  \end{align*}
  so $\langle u_{F^{(r)},e_i}, -2e_1-e_r+e_s+e_t \rangle$ is negative if and only if $i=1$.  Observe
  that $y$ is on level $0$ with respect to $F^{(r)}$.  In view of
  \autoref{prop:nill:5.5}.(\ref{prop:nill:5.5:2}) we obtain that $\opp(F^{(r)},e_i)\ne y$ for $i\ne
  1,r$.

  Now we can apply \autoref{prop:nill:5.5}.(\ref{prop:nill:5.5:3}) which tells us that the vertices
  $e_1$ and $\opp(F,e_1)$ are distant, and according to \autoref{lemma:nill_sumDistant} the point
  $x:=e_1+\opp(F,e_1)=-e_1 -e_r + e_s + e_t$ is a vertex of $P$.  Clearly, $x$ lies on level $0$
  with respect to $F$ which forces $1\in\{s,t\}$ or $r\in\{s,t\}$.  However, both cases are excluded
  due to our initial assumption on $y$ in this case. Hence, this last case does not occur.
\end{proof}

\begin{example}
  As discussed in \autoref{example:B-again} the facet $H$ of the polytope $B$ from
  \autoref{example:eta} has $\eta$-vector $(4,3,2,1)$, and the opposite vertex of $e_3$ is on level
  $-2$ with respect to the facet $H$. Hence, $B$ is a bipyramid.
\end{example}

The following result is dealing with $\eta^{F}=(d,d,d-4,2)$ and reduces it to one case we already
dealt with so far.
\begin{lemma}\label{lem:ddd-42}
  Let $P$ be a $d$-dimensional \str polytope with exactly $3d-2$ vertices and
  some special facet $F$ with $\eta^F = (d,d, d-4, 2)$.  Then $P$ is lattice equivalent to a skew
  bipyramid over a $(d{-}1)$-dimensional smooth Fano polytope with $3d-4$ vertices, or there exists
  another facet $G$ with $\eta^G = (d,d-1,d-2, 1)$ and one vertex $w\in G$ with $\opp(G,w) \not\in
  V(G,0)$.
\end{lemma}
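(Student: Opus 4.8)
The plan is to run the same machinery used in \autoref{prop:classificationOf0and-1} and the previous lemmas, now applied to a facet $F$ with $\eta^F=(d,d,d-4,2)$. First I would invoke \autoref{prop:classificationOf0and-1}.\eqref{prop:classificationOf0and-1:1}, since $\eta^F_0=d$: up to lattice equivalence $F=\conv\{e_1,\ldots,e_d\}$, the vertices of $F$ form a lattice basis, and with $\phi:=\phi^F$ we have $V(F,0)=\SetOf{\phi(e_i)-e_i}{i\in[d]}$ together with $V(F,-1)\subseteq\SetOf{-e_i}{i\in[d]}$. Here $\eta_{-1}=d-4$ and $\eta_{-2}=2$, so there are exactly two vertices $z,z'$ on level $-2$. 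Exactly as in the proof of \autoref{prop:novertexat-3} and \autoref{lem:dd-1d-21b}, for each of these the coordinate $\langle u_{F,e_i},z\rangle$ must be non-positive for all $i$ (a positive coordinate would push $z$ to level $\le -3$ with respect to the special facet $\neigh(F,e_i)$, contradicting \autoref{prop:novertexat-3}), and primitivity rules out $-2e_i$. Hence each of $z,z'$ has the form $-e_\alpha-e_\beta$ with $\alpha\neq\beta$.

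Next I would use $v_P=\0$. Since $\phi(e_i)\neq\0$ forces $-e_i$ into play via \autoref{lem:phi} only when $-e_i$ is actually a vertex, and since $\eta_0=d$ gives $\phi(e_i)\neq\0$ for all $i$, the vertex-sum equation reads
\[
z+z'+\sum_{i=1}^d\phi(e_i)-\sum_{i\in I}e_i\ =\ \0
\]
for an index set $I\subseteq[d]$ with $|I|=d-4$ indexing $V(F,-1)=\SetOf{-e_i}{i\in I}$. Solving gives $z+z'=\sum_{i\in I}e_i-\sum_{i=1}^d\phi(e_i)$. Writing $J:=[d]\setminus I$, which has four elements, the right side is supported on at most the four coordinates of $J$ (the image of $\phi$ restricted suitably) minus a correction; a short counting argument, entirely parallel to the three-case split in \autoref{prop:novertexat-3}, pins down which $e_j$, $j\in J$, lie in $\image\phi$. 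The key dichotomy is whether some coordinate $a$ fails to appear in $\image\phi$: if $e_a\notin\image\phi$ then every vertex other than $e_a$ and $\phi(e_a)-e_a$ has $a$-th coordinate zero (one must check the two level-$(-2)$ vertices $z,z'$ here, which is why their explicit form $-e_\alpha-e_\beta$ matters), so $P$ is a skew bipyramid with apices $e_a,\phi(e_a)-e_a$ over $Q=\SetOf{x\in P}{x_a=0}$, and \autoref{lem:skew-bipyramid} together with \autoref{thm:3d-1} finishes the first alternative.

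So the remaining case is that $\image\phi$ meets all $d$ coordinates; since $|\image\phi|\le d$ this forces $\phi$ to be a bijection of $\Vert F$, and then $z+z'=\sum_{i\in I}e_i-\sum_i e_i=-\sum_{j\in J}e_j$, i.e.\ $\{z,z'\}$ partitions $\{-e_j:j\in J\}$ into two pairs — say $z=-e_a-e_b$, $z'=-e_c-e_d$ after relabeling $J=\{a,b,c,d\}$. The goal is now to produce the advertised facet $G$. Consider $G:=\neigh(F,e_a)$, with $u_G=\1-e_a$ and $\opp(F,e_a)=\phi(e_a)-e_a$. Since $z=-e_a-e_b\in V(F,-1)$ one computes $\langle u_G,z\rangle=-1-1=-2$, so $z\in V(G,-2)$; likewise I would track the levels with respect to $G$ of the other special-facet data, using \autoref{lemma:2} in the now-familiar way, to read off that $\eta^G$ has exactly one vertex on level $-2$, namely $z$, and that $\eta^G_0=d-1$ because the vertex $e_a$ (which sits on $G$) has $\opp(G,e_a)$ landing on level $0$ or lower — concretely, $\phi^G$ vanishes at $e_a$, which is exactly the statement that $G$ has a vertex $w=e_a$ with $\opp(G,w)\notin V(G,0)$. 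Verifying that $\eta^G=(d,d-1,d-2,1)$ (and not some other admissible vector) is the step I expect to be the main obstacle: it requires carefully transferring each of the $\le 1$ levels from $F$ to $G$ via \autoref{lemma:2}, ruling out by \autoref{tab:eta} and \autoref{prop:novertexat-3} any vertex of $G$ on level $\le -3$, and checking the count $\eta^G_1+\eta^G_0+\cdots=3d-2$ closes up with the claimed shape. Once that is done, $G$ and $w=e_a$ are the desired output, completing the proof.
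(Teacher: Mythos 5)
Your first half follows the paper's own route: normalize $F$ via \autoref{prop:classificationOf0and-1}, show via \autoref{prop:novertexat-3} that the two level-$(-2)$ vertices have non-positive coordinates and hence are of the form $-e_\alpha-e_\beta$, use $v_P=\0$ to control $\image\phi$, and split according to whether some $e_a$ is missed by $\phi$ (skew bipyramid) or $\phi$ is a bijection. Up to there the proposal is sound and essentially identical to the paper.

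The second alternative, however, has a genuine gap, and it sits exactly at the step the lemma is really about. First, a computational slip: with $G=\neigh(F,e_a)$ and $u_G=\1-e_a$ one gets $\langle u_G,-e_a-e_b\rangle=-2+1=-1$, so $z=-e_a-e_b$ lies at level $-1$ with respect to $G$, while it is the \emph{other} vertex $z'=-e_c-e_d$ that lands at level $-2$. More seriously, your proposed witness $w=e_a$ is not a vertex of $G$ at all: $\neigh(F,e_a)$ is obtained from $F$ by deleting $e_a$ and adjoining $\opp(F,e_a)=\phi(e_a)-e_a$, and $e_a$ merely sits in $V(G,0)$; so "$\phi^G$ vanishes at $e_a$" is not even a well-formed statement, and it cannot serve as the vertex $w\in\Vert{G}$ with $\opp(G,w)\notin V(G,0)$ demanded by the conclusion. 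The paper instead computes all level sets of $G$ explicitly (giving $\eta^G=(d,d-1,d-2,1)$, with $\phi(e_a)-e_a$ joining level $1$, $e_a$ joining level $0$, $\phi(e_r)-e_r$ and $-e_a-e_b$ at level $-1$, and $-e_c-e_d$ at level $-2$) and takes $w=e_r$, where $e_r$ is the $\phi$-preimage of the deleted vertex $e_a$; the verification that $\opp(G,e_r)\notin V(G,0)$ uses the dual basis vector $u_{G,e_r}$ (which equals $e_r$ or $e_a+e_r$), which is non-negative on every vertex of $V(G,0)$, combined with \autoref{prop:nill:5.5}.(\ref{prop:nill:5.5:2}). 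You explicitly defer this bookkeeping as "the main obstacle", so the decisive identification of $w$ and the argument that its opposite vertex avoids level zero are missing, and the candidate you do name would fail.
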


The two cases in the conclusion are not necessarily mutually exclusive.

\begin{proof} Let $\phi:=\phi^F$.  Since $\eta^F_0 = d$ we can assume $F = \conv\{ e_1,e_2,\ldots,
  e_d \}$ by \autoref{prop:classificationOf0and-1}.  Moreover,
  $V(F,0)=\smallSetOf{\phi(e_i)-e_i}{i\in[d]}$ and up to relabeling we can assume that
  $V(F,-1)=\{-e_5,-e_6, \dots, -e_d\}$. Further, if $-e_i$ is a vertex then $\phi(\phi(e_i))=e_i$, which
  implies that $\image(\phi)$ contains the vertices $\{e_5,e_6,\dots,e_d\}$.

  Let $x$ and $y$ be the two vertices in $V(F,-2)$. Then
  \[
  0\ =\ v_P\ =\ x+y \,+\, \sum_{i=1}^d \phi(e_i) \,-\, \sum_{i=5}^d e_i\,,
  \]
  and hence $x+y=-e_h-e_i-e_j-e_k$ for some not necessarily distinct $h,i,j,k\in [d]$.  The equation
  above also shows that $\image \phi = \{ e_h, e_i,e_j,e_k \} \cup \{ e_5, e_6,\ldots, e_d \}$.

  We want to show that all coordinates of $x$ and $y$ are non-positive.  Suppose that $x_a>0$ for
  some $a\in[d]$.  Consider the neighboring facet $F':=\neigh(F,e_a)$.  Then $x$ lies on level $-3$
  or below with respect to $F'$.  However, as $v_P=\0$, the facet $F'$ is special, too, and the case
  $\eta^{F'}_\ell>0$ for $\ell\le-3$ is excluded by \autoref{prop:novertexat-3}.  The same argument
  works for the vertex $y$.  Once more we distinguish two cases.\smallskip

  \noindent\framebox{Let $\{ h,i,j,k \}\ne\{1,2,3,4\}$.}  Without loss of generality we may assume that
  $h,i,j,k$ are all distinct from $1$.  We see that $e_1$ is not contained in $\image(\phi)$.  This
  means that $P$ is a skew bipyramid with apices $e_1$ and $\phi(e_1)-e_1$.
  \smallskip

  \noindent\framebox{Let $\{ h,i,j,k \}=\{1,2,3,4\}$.}  Without loss of generality, $x=-e_1-e_2$ and
  $y=-e_3-e_4$.  In this case the map $\phi$ is bijective since $\image \phi = \{ e_1, e_2, \ldots,
  e_d \}$.  
  We will reduce this to another case that we will have to look into anyway.  This way we save a
  further distinction of cases.  We look at the facet $G:=\neigh(F,e_1)$ with facet normal vector
  $u_{G} = \1-e_1$. Let $e_r$ be the inverse image of $e_1$ with respect to the bijection $\phi$.
  We obtain:
  \[
  \begin{aligned}
    V(G, 1) \ &= \ \SetOf{e_i}{2\le i \le d} \cup \{ \phi(e_1)-e_1 \}\\
    V(G, 0) \ &= \ \SetOf{\phi(e_i)-e_i}{i\in[d]\setminus\{ 1,r \}} \cup \{ e_1 \}\\
    V(G, -1) \ &= \ \SetOf{-e_i}{5\le i \le d} \cup \{ \phi(e_r)-e_r, -e_1-e_2 \}\\
    V(G, -2) \ &= \ \{ -e_3-e_4 \}
  \end{aligned}
  \]
  Therefore $\eta^{G} = (d,d-1,d-2,1)$.  For the vertex $z:=\opp(G,e_r)$ we have the inequality
  $\langle u_{G,e_r},z\rangle<0$ by \autoref{prop:nill:5.5}.(\ref{prop:nill:5.5:2}).  We will show
  that $z$ is not contained in $V(G,0)$.  To this end we compute
  \[
  u_{G,e_r} \ = \
  \begin{cases}
    e_1 + e_r & \text{if $\phi(e_1)=e_r$} \,,\\
    e_r & \text{otherwise}\, .
  \end{cases}
  \]
  Either way, from looking at the above list of vertices of $P$ we see that every vertex $v\in
  V(G,0)$ satisfies $\langle u_{G,e_r}, v\rangle \ge 0$.  We conclude that $z$ is not on level zero
  with respect to $G$, and we may take $w:=e_r$ to prove our claim.
\end{proof}

\begin{lemma}\label{lem:dd-1d-21-oppin0}
  Let $P$ be a \str $d$-polytope with exactly $3d-2$ vertices such that $v_P=\0$. Suppose that $P$
  has a (special) facet $F$ with $\eta^F=(d,d-1,d-2,1)$ and for every vertex $v$ of $F$ we have
  $\opp(F,v) \in V(F,0)$.  Then $P$ is a (possibly skew) bipyramid over some \str polytope of
  dimension $d-1$ with $3d-4$ vertices.
\end{lemma}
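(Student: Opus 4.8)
The plan is to put $P$ into the normal form of \autoref{prop:classificationOf0and-1}\eqref{prop:classificationOf0and-1:2}, to pin down the unique vertex on level $-2$, and then to locate a coordinate direction containing only two vertices, which exhibits $P$ as a (skew) bipyramid over the corresponding coordinate slice.

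First I would set up coordinates. Since $\eta^F_0=d-1$ and, by hypothesis, $\opp(F,v)\in V(F,0)$ for every $v\in\Vert{F}$, \autoref{cor:opposite_at_0} gives $\opp(F)=V(F,0)$, so \autoref{prop:classificationOf0and-1}\eqref{prop:classificationOf0and-1:2} applies: up to lattice equivalence $F=\conv\{e_1,\dots,e_d\}$, there are $a,b\in[d]\setminus\{1,2\}$ with $x:=\opp(F,e_1)=\opp(F,e_2)=-e_1-e_2+e_a+e_b$, and (writing $\phi:=\phi^F$) $V(F,0)=\{x\}\cup\{\phi(e_i)-e_i : i\ge3\}$, while $V(F,-1)\subseteq\{-e_i : i\in[d]\}\cup\{-e_1-e_2+e_s : s\in[d]\}$. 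Because $v_P=\0$ every facet of $P$ is special, so every $\neigh(F,e_i)$ is special; combining \autoref{lemma:2} with \autoref{prop:novertexat-3} (no facet of $P$ has a vertex on any level $\le-3$) forces all coordinates of the unique vertex $z\in V(F,-2)$ to be $\le 0$, and since vertices of $P$ are primitive this gives $z=-e_\alpha-e_\beta$ for distinct $\alpha,\beta$. Applying terminality to $-e_\alpha=\tfrac12 z+\tfrac12(e_\beta-e_\alpha)$ shows that neither $e_\beta-e_\alpha$ nor, symmetrically, $e_\alpha-e_\beta$ is a vertex of $P$.

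Next I would read off $\sum_{v\in\Vert P}v=\0$ coordinatewise. Writing $V(F,-1)=\{-e_i : i\in I\}\cup\{-e_1-e_2+e_s : s\in J\}$ with $J\subseteq\{3,\dots,d\}$, the $e_1$- and $e_2$-components give $|\{i\ge3:\phi(e_i)=e_1\}|=[1\in I]+|J|+[1\in\{\alpha,\beta\}]$ and likewise for $2$, and for $k\ge3$ the $e_k$-component gives $|\{i\ge3:\phi(e_i)=e_k\}|-[k\in I]+[k\in J]=[\alpha=k]+[\beta=k]-[a=k]-[b=k]$. Call $k\in\{3,\dots,d\}$ \emph{free} if no $\phi(e_i)$ with $i\ge3$ equals $e_k$, $k\notin\{a,b\}$ and $k\notin J$; the last identity then forces in addition $-e_k\notin V(F,-1)$ and $k\notin\{\alpha,\beta\}$, so the only two vertices of $P$ with nonzero $k$-th coordinate are $e_k$ and $\opp(F,e_k)=\phi(e_k)-e_k$, lying on opposite sides of $\{x_k=0\}$. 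For such a $k$, $P=\conv(\{e_k,\phi(e_k)-e_k\}\cup Q)$ with $Q:=P\cap\{x_k=0\}$ is a skew bipyramid over $Q$ (with the vertex $\phi(e_k)$ of $Q$ playing the role of $v$ in \autoref{lem:skew-bipyramid}, and the segment meeting $\tfrac12\phi(e_k)\in\operatorname{relint}Q$), and \autoref{lem:skew-bipyramid} identifies $Q$ as a \str $(d{-}1)$-polytope with $3d-4=3(d-1)-1$ vertices, hence smooth Fano by \autoref{thm:3d-1}; this is the asserted conclusion. A free index always exists when $|J|$ is large: the first two identities yield at least $2|J|$ indices $i\ge3$ with $\phi(e_i)\in\{e_1,e_2\}$, so at most $(d-2)-2|J|$ indices are blocked by the condition on $\phi$, at most $2$ by $\{a,b\}$ and $|J|$ by $J$; thus a free index exists as soon as $|J|\ge3$.

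The remaining work — and the main obstacle — is the small-$|J|$ regime, where the data $\phi$, $\{a,b\}$, $I$, $J$, $\{\alpha,\beta\}$ may together exhaust $\{3,\dots,d\}$ and no free coordinate is available. Here the coordinate identities become very rigid: in the extremal situation one is driven to $|J|=0$, $\phi(e_i)\in\{e_3,\dots,e_d\}$ for all $i\ge3$, $I=\{3,\dots,d\}$, $\phi|_{\{e_3,\dots,e_d\}}$ a fixed-point-free involution (so $d$ is even), and $\{\alpha,\beta\}=\{a,b\}$ with $\{a,b\}$ not a $\phi$-pair; the $\phi$-pairs disjoint from $\{a,b\}$ then split off as direct summands $P_6$ by \autoref{lemma:Fano+Fano=Fano}, leaving a core $R$ of dimension at most $6$ with $3\dim R-2$ vertices. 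One then checks by hand — exactly as in the proof of \autoref{lem:dd-1d-21b} — that such an $R$ admits a vector $u$ defining a face with more than $\dim R$ vertices (or a non-lattice facet normal), contradicting that $P$ is \str; hence these configurations cannot occur. The delicate part is enumerating the finitely many residual configurations with $|J|\in\{0,1,2\}$ and showing each is either a bipyramid outright or impossible; the substantive inputs throughout are \autoref{prop:classificationOf0and-1}, \autoref{prop:novertexat-3}, \autoref{lemma:2}, \autoref{lem:skew-bipyramid}, and terminality.
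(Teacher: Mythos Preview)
Your coordinate setup and the free-index argument are sound where they apply, but the proof has a real gap: the cases $|J|\le 2$ are only sketched, and the sketch does not go through. For $|J|=0$ you assert without derivation that one is ``driven to'' $\phi|_{\{e_3,\dots,e_d\}}$ a fixed-point-free involution, $I=\{3,\dots,d\}$, and $\{\alpha,\beta\}=\{a,b\}$; the vertex-sum identities alone do not force this (for instance $1$ or $2$ may lie in $I$ or in $\{\alpha,\beta\}$, which changes the count). The appeal to ``exactly as in the proof of \autoref{lem:dd-1d-21b}'' does not transfer, since that lemma starts from $\opp(F,e_1)\in V(F,-1)$, precisely the opposite of the present hypothesis. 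And $|J|\in\{1,2\}$ is not addressed at all.

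Worse, $|J|\le 2$ is not a fringe case but the only one. The paper's first move is to observe that if $-e_1-e_2+e_k\in V(F,-1)$ with $k\notin\{\alpha,\beta\}$, then both this point and $z=-e_\alpha-e_\beta$ lie on level $-2$ of $\neigh(F,e_k)$, so $\eta^{\neigh(F,e_k)}=(d,d,d-4,2)$ and \autoref{lem:ddd-42} finishes. Hence one may assume $J\subseteq\{\alpha,\beta\}$ from the start; your free-index counting then never fires, and all of the content lies in $|J|\le 2$. From there the paper passes to $H=\neigh(F,e_a)$ and, splitting on $a=b$ versus $a\ne b$ and on whether $\{a,b\}=\{\alpha,\beta\}$, shows in each branch that either $\eta^H=(d,d,d-4,2)$ (again \autoref{lem:ddd-42}) or some $\opp(H,w)\notin V(H,0)$ (so \autoref{prop:dd-1d-21} applies), with a single residual subcase eliminated via \autoref{lemma:nill_sumDistant} and \autoref{lem:phi:notwonegative}. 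The key idea you are missing is this reduction to already-proved results through adjacent facets: the present lemma is the last link in a chain and is meant to feed back into \autoref{lem:ddd-42} and \autoref{prop:dd-1d-21}, not to repeat their work by an independent enumeration of low-dimensional cores.
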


\begin{proof}
  By \autoref{prop:classificationOf0and-1}\eqref{prop:classificationOf0and-1:2} we may assume that
  $F=\conv\{e_1,e_2, \ldots, e_d\}$ and
  \begin{align}\label{eq:VF-1pre}
    \begin{split}
      V(F,0)\ &=\ \{-e_1-e_2+e_a+e_b,\, \phi(e_3)-e_3,\, \ldots,\, \phi(e_d)-e_d\}\\
      V(F,-1)\ &\subseteq\ \{-e_1,\, -e_2,\, \ldots,\, -e_d\} \;\cup\; \SetOf{-e_1-e_2 + e_k}{k\in
        [d]}
    \end{split}
  \end{align}
  for $a, b \in [d] \setminus\{ 1,2 \}$ not necessarily distinct.

  Let $z=\sum_{i\ge 1}\mu_ie_i$ be the unique vertex at level $-2$ with respect to $F$.  If
  $\mu_i=\langle u_{F,e_i}, z \rangle$ is positive for some $i$, then $z\in
  V(\neigh(F,e_i),-3)$. But $\neigh(F,e_i)$ is special, so this contradicts
  \autoref{prop:novertexat-3}. Hence, $\mu_i\le 0$ for all $i$ and $z=-e_r-e_s$ for some indices
  $r\ne s$. Furthermore, if $-e_1-e_2+e_k\in P$ for some $k\ne r,s$ we get that this vertex as well
  as $z$ would lie on level $-2$ with respect to the facet $\neigh(F,e_k)$. So this adjacent facet
  must have $\eta$-vector $(d,d,d-4,2)$ which is already dealt with \autoref{lem:ddd-42}. This implies
  \begin{equation}\label{eq:VF-1}
    V(F,-1) \ \subset \ \{-e_1,\, -e_2,\, \ldots,\, -e_d\}\;\cup\;\{-e_1-e_2+e_r\,,\ -e_1-e_2+e_s\}\,.
  \end{equation}

  Let $H:=\neigh(F,e_a)$.  The vertex $e_a$ is good due to our assumption every vertex of $F$ is good, and so
  $u_{H}=\1-e_a$.  We distinguish between $a=b$ and $a\ne b$.\smallskip

  \noindent\framebox{Let $a=b$.} Then $x=\opp(F,e_1)=\opp(F,e_2)=-e_1-e_2+2e_a$ and we compute $\langle
  u_H,x\rangle=\langle\1-e_a,x\rangle=-2$.  Either $x$ is the only vertex in $V(H,-2)$ and hence $\eta^H =
  (d,d-1,d-2,1)$ or there is another vertex in $V(H,-2)$ besides $x$ which would result in $\eta^H = (d,d,d-4,2)$,
  but this case is already captured in \autoref{lem:ddd-42}. So we can assume that $x$ is the only vertex in
  $V(H,-2)$.

  Up to exchanging $e_1$ and $e_2$, we can assume that $\phi(e_a)\ne e_1$.  We consider the vertex
  $y:=\opp(H,e_1)$. The case $y \not\in V(H,0)$ is already covered in \autoref{prop:dd-1d-21}. So we can assume
  $y$ is in $V(H,0)$. As $u_{H,e_1}=e_1$, this implies that $\langle u_{H,e_1}, y\rangle < 0$, that is, in the
  basis defined by $H$, the $e_1$-coordinate of $y$ must be negative. As $\phi(e_a)\ne e_1$ the $e_1$-coordinate
  of $y$ in the basis of $H$ coincides with $y_1$, which is the $e_1$-coordinate in the basis of $F$.  We can
  check which vertices or vertex candidates have a negative first coordinate.  These turn out to be $x$, and
  $-e_1$, $-e_1-e_2+e_r$, and $-e_1-e_2+e_s$, if they exist.  However, none of these lies on level zero.  Hence, a
  vertex $y$ with these properties does not exist in $V(F,0)$.  This finishes the case that $a=b$.\smallskip

  \noindent\framebox{Let $a\ne b$.} Then our case distinction has a further ramification.
  \begin{enumerate}
    \item Let $\{ a,b \} \neq \{ r,s \}$.  Without loss of generality we can assume that $a \not\in\{ r,s \}$. We
      define $F^{(a)} := \neigh(F,e_a)$. Since $\langle u_{F,e_a}, -e_r-e_s \rangle = 0$ we know that $-e_r-e_s$
      is contained in $V(F^{(a)}, -2)$.  In particular, $\eta^{F^{(a)}}_{-2}$ does not vanish and hence the
      $\eta$-vector of $F^{(a)}$ reads $(d,d-1,d-2,1)$ or $(d,d,d-4,2)$. The latter being already discussed in
      \autoref{lem:ddd-42} so we are left with the first $\eta$-vector. This further implies that $\langle
      u_{F,e_a}, x \rangle \le 0$ for all $x\in V(F,-1)$. All vertices in $F$ are good, so
      \autoref{lem:phi:notwonegative} tells us that there can only be one vertex $x\in V(F,-1)$ with $\langle
      u_{F,e_a}, x \rangle = -1$. Due to the bounds obtained from \autoref{lemma:2} the only possible vertex in
      $V(F,-1)$ which has a non-positive scalar product with $u_{F,e_a}$ is $-e_a$. Summing up all vertices $y$ of
      $P$ with $\langle u_{F,e_a}, y \rangle \ne 0$ implies that $e_a$ cannot be contained in the image of $\phi$.
      Hence,
      \[
      V(F^{(a)},0) \ = \ \SetOf{\phi(e_i)-e_i}{i \in [d] \setminus \{ 1,2,a \}} \cup \{ \pm e_a \} \, .
      \]
      Up to exchanging $e_1$ and $e_2$ we may assume that $\phi(e_a)\ne e_1$.  Thus $u_{F^{(a)}, e_1} = e_1$,
      which implies $\langle u_{F^{(a)}, e_1}, x \rangle \ge 0$ for all $x\in V(F^{(a)},0)$. However, this
      excludes that $\opp(F^{(a)},e_1)$ is contained in $V(F^{(a)},0)$.  Which is the case already considered in
      \autoref{prop:dd-1d-21}.

    \item Let $\{ a,b \} = \{r,s\}$.  Then $x=-e_1-e_2 +e_r+e_s$ is the vertex which is opposite both
      to $e_1$ and $e_2$ with respect to $F$.  From the argument in the case $a=b$ we learn that if $-e_1-e_2
      +e_r$ or $-e_1-e_2 +e_s$ were vertices of $P$ then the facet $F^{(r)}:=\neigh(F,e_r)$ or the facet
      $F^{(s)}:=\neigh(F,e_s)$ would contain a vertex such that its opposite does not lie on level zero with
      respect to $F^{(r)}$ or $F^{(s)}$. This vertex would either be $e_1$ or $e_2$.  Again this case is already
      captured in \autoref{prop:dd-1d-21}.  Therefore, neither $-e_1-e_2 +e_r$ nor $-e_1-e_2 +e_s$ is a vertex of
      $P$.  By \eqref{eq:VF-1} we have that $V(F,-1)\subset\SetOf{-e_i}{i\in [d]}$.

      Using the generic forms of all vertices given in \eqref{eq:VF-1pre} and
      \eqref{eq:VF-1} we conclude that there is no $v\in \Vert P$ such that $x+v$ is again a
      vertex. Hence, by \autoref{lemma:nill_sumDistant}, no vertex of $P$ is distant to $x$.  In
      particular, there is a facet $G$ which contains both $x=-e_1-e_2+e_r+e_s$ and $z=-e_r
      -e_s$. This implies
      \[
      2 \ = \ \langle u_G, -e_1-e_2+e_r+e_s \rangle + \langle u_G, -e_r-e_s \rangle \ = \ \langle  u_G, -e_1 -e_2 \rangle\, .
      \]
      However, this inequality says that if $-e_1$ and $-e_2$ are vertices of $P$ then they, too, are
      contained in $G$.  The four points $-e_1,-e_2,x,z$ are linearly dependent, hence the facet $G$
      cannot be a simplex.  This is impossible.

      We may thus assume that $-e_1$ is not a vertex of $P$.  The vertices of $P$ with a non-vanishing
      first coordinate comprise $e_1$, $-e_1-e_2+e_r+e_s$ and possibly $e_1-e_i$ for some $i\ge 3$.
      Since, however, the vertex sum $v_P$ vanishes the sum of the first coordinates of all vertices
      vanishes.  This excludes that $e_1-e_i$ is a vertex of $P$ for any $i\ge 3$; this is to say that
      $e_1$ is not contained in the image of $\phi$.

      Suppose that  $\phi(e_r)=e_s$.  Then we have
      \[
      \frac{1}{2}(\phi(e_r)-e_r+z) \ = \ \frac{1}{2}(e_s - e_r - e_r - e_s) \ = \ -e_r \, .
      \]
      In view of the fact, however, that both $\phi(e_r)-e_r$ and $z$ are vertices of $P$ this is a
      contradiction to the terminality of $P$.  So $\phi(e_r)\ne e_s$ and, by symmetry, also
      $\phi(e_s)\ne e_r$.

      The standard normal vector of the facet $F^{(r)}$ reads $\1-e_r$, and thus the vertex
      $\phi(e_s)-e_s$ lies on level zero with respect to $F^{(r)}$.  Each such vertex is opposite to
      some vertex in $F^{(r)}$, and we infer that $\opp(F^{(r)},e_s)=\opp(F,e_s)=\phi(e_s)-e_s$.  This
      is to say that the vertex $e_s$ is good with respect to $F^{(r)}$.

      Up to exchanging $e_r$ and $e_s$ we may assume that $e_2 \ne \phi(e_r)$.  We claim that
      $\phi(e_r)$ is also good with respect to $F^{(r)}$.  To see this we consider two cases. Either
      $\phi(\phi(e_r)) = e_r$ or $\phi(\phi(e_r)) \ne e_r$.  In the first case, $-e_r$ must be a
      vertex of $P$, and $-e_r = (\phi(e_r) - e_r) - \phi(e_r)$ lies on level zero with respect to
      $F^{(r)}$.  This yields $\opp(F^{(r)},\phi(e_r))=-e_r$, and $\phi(e_r)$ is good with respect to $F^{(r)}$.

      It remains to consider the case where $\phi(\phi(e_r))\ne e_r$.  Then $\opp(F,\phi(e_r)) =
      \opp(F^{(r)},\phi(e_r))\in V(F,0)$ which makes $\phi(e_r)$ good with respect to $F^{(r)}$.

      The vertex $-e_r-e_s = (\phi(e_r)-e_r) - \phi(e_r) -e_s$ now satisfies
      \[
      \langle  u_{F^{(r)}, \phi(e_r)}, -e_r-e_s \rangle \ = \ -1 \ = \ \langle  u_{F^{(r)}, e_s}, -e_r-e_s \rangle\, .
      \]
      The final contradiction now comes from \autoref{lem:phi:notwonegative} since we showed that both $e_s$ and      $\phi(e_r)$ are good with respect to $F^{(r)}$ and $\opp(F^{(r)},e_s) = \phi(e_s)-e_s \ne \opp(F^{(r)},\phi(e_r))$.\qedhere
  \end{enumerate}
\end{proof}

The following compiles the results of this section into one concise statement.

\begin{theorem}\label{thm:finalsummary}
  Let $P$ be a $d$-dimensional \str polytope with exactly $3d-2$ vertices and $v_P=\0$. Then $P$ is
  lattice equivalent to one of the following:
  \begin{enumerate}
  \item a (possibly skew) bipyramid over
    \begin{itemize}
    \item either $P_5 \oplus P_6^{\oplus \frac{d-3}{2}}$
    \item or a (possibly skew) bipyramid over $P_6^{\oplus \frac{d-2}{2}}$,
    \end{itemize}
  \item $\HSBC{4} \oplus P_6^{\oplus \frac{d}{2} - 2}$.
  \end{enumerate}
\end{theorem}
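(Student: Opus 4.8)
The plan is a case analysis driven by the $\eta$-vector of an arbitrary facet, exploiting that $v_P=\0$ forces every facet of $P$ to be special. Combining the list in Table~\ref{tab:eta} with \autoref{prop:novertexat-3}, which rules out the vector $(d,d,d-3,0,1)$, the $\eta$-vector of every facet of $P$ is one of $(d,d-2,d)$, $(d,d-1,d-2,1)$, or $(d,d,d-4,2)$. First I would dispose of the case where $\eta^G=(d,d-2,d)$ for \emph{every} facet $G$: then \autoref{prop:centrallySymmetric} shows $P$ is centrally symmetric, and \autoref{cor:centrally_symmetric} identifies it, up to lattice equivalence, as a double proper bipyramid over $P_6^{\oplus(d-2)/2}$ or as $\HSBC{4}\oplus P_6^{\oplus(d/2-2)}$; the first is the ``proper over proper'' instance of alternative~(1), second bullet, and the second is alternative~(2).

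Next I would treat the facets whose $\eta$-vector carries a vertex below level $-1$. If some facet $F$ has $\eta^F=(d,d,d-4,2)$, then \autoref{lem:ddd-42} either already exhibits $P$ as a skew bipyramid over a $(d{-}1)$-dimensional smooth Fano polytope with $3d-4$ vertices, or produces a facet $G$ with $\eta^G=(d,d-1,d-2,1)$ having a vertex $w\in\Vert G$ with $\opp(G,w)\notin V(G,0)$; the latter feeds into the remaining case. If some facet $F$ has $\eta^F=(d,d-1,d-2,1)$, then I would split according to whether $\opp(F,v)\notin V(F,0)$ for some vertex $v$ of $F$: if so, \autoref{prop:dd-1d-21} gives that $P$ is a (possibly skew) bipyramid over a $(d{-}1)$-dimensional \str polytope with $3d-4$ vertices, and if not --- i.e.\ every opposite vertex sits at level $0$ --- \autoref{lem:dd-1d-21-oppin0} gives the same conclusion.

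Hence, outside the centrally symmetric case, $P$ is a (possibly skew) bipyramid over a $(d{-}1)$-dimensional \str polytope $Q$ with $3d-4=3(d-1)-1$ vertices, and \autoref{lem:Fano=Bipyramid_over_Fano} together with \autoref{lem:skew-bipyramid} guarantees that such bipyramids stay inside the \str class. To finish, I would invoke \Obro's \autoref{thm:3d-1} on $Q$: when $d$ is odd, $d-1$ is even and $Q$ is lattice equivalent to~\eqref{eq:obro:even}, namely $P_5\oplus P_6^{\oplus(d-3)/2}$; when $d$ is even, $d-1$ is odd and $Q$ is a proper or skew bipyramid over $P_6^{\oplus(d-2)/2}$ by \eqref{eq:obro:odd-not-skew} and \eqref{eq:obro:odd-skew}. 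Re-inserting these into ``(possibly skew) bipyramid over $Q$'' yields exactly alternative~(1): the first bullet for $d$ odd and the second (a double, possibly skew, bipyramid) for $d$ even. Together with the centrally symmetric case this exhausts all polytopes.

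The new content here is only the orchestration; the genuine difficulty is front-loaded into the inputs --- the exclusion of $(d,d,d-3,0,1)$ in \autoref{prop:novertexat-3} and the long coordinate computations in \autoref{prop:dd-1d-21}, \autoref{lem:dd-1d-21-oppin0}, and \autoref{lem:ddd-42}. The main things to be careful about while assembling the proof are the parity bookkeeping, so that \autoref{thm:3d-1} really applies to the $(d{-}1)$-dimensional base with $3(d-1)-1$ vertices, and the recognition that the centrally symmetric ``double proper bipyramid'' is a specialization of alternative~(1) rather than a missing entry in the list.
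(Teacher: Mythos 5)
Your proposal is correct and takes essentially the same route as the paper's proof: the same $\eta$-vector case split (with \autoref{prop:novertexat-3} excluding $(d,d,d-3,0,1)$), the centrally symmetric case handled via \autoref{prop:centrallySymmetric} and \autoref{cor:centrally_symmetric}, and the remaining cases reduced to (possibly skew) bipyramids through \autoref{lem:ddd-42}, \autoref{prop:dd-1d-21}, and \autoref{lem:dd-1d-21-oppin0}. Your explicit final appeal to \autoref{thm:3d-1} to identify the bases, including the parity bookkeeping, is exactly what the paper leaves implicit in the remarks following those propositions.
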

\begin{proof}
  According to \autoref{tab:eta} the a priori feasible $\eta$-vectors are $(d,d,d-3,0,1)$,
  $(d,d-2,d)$, $(d,d,d-4,2)$, or $(d,d-1,d-2,1)$.

  The first case is actually excluded by \autoref{prop:novertexat-3}.  If the $\eta$-vector of each
  facet of $P$ equals $(d,d-2,d)$, then $P$ is centrally symmetric by
  \autoref{prop:centrallySymmetric} and thus classified by \autoref{cor:centrally_symmetric}.  This,
  in particular, comprises the polytope $\HSBC{4} \oplus P_6^{\oplus \frac{d}{2} - 2}$.  Therefore,
  our goal is to show that in all remaining cases $P$ is a possibly skew bipyramid over a suitable
  base polytope.

  We may assume that $P$ has at least one facet, $F$, with $\eta^F=(d,d,d-4,2)$ or
  $\eta^F=(d,d-1,d-2,1)$.  Recall that all facets are special as $v_P=\0$.

  Let us consider the case where $\eta^F=(d,d-1,d-2,1)$.  Then our case distinction further ramifies
  depending on where the vertices opposite to the vertices of $F$ are located.  Let
  \[
  S \ = \ \SetOf{\opp(F,v)}{v\in F} \,.
  \]
  Either there is a vertex in $S$ which lies at level $-1$ or $-2$ with respect to $F$. Then we call
  $F$ of \emph{type A}, and Proposition~\ref{prop:dd-1d-21} tells us that $P$ is a proper or skew
  bipyramid.  Or all vertices in $S$ are at level zero.  We will postpone the latter case, which we
  call \emph{type B}.

  Suppose now that $\eta^F=(d,d,d-4,2)$.  Then \autoref{lem:ddd-42} shows that either $P$ is
  possibly skew bipyramid, or there is another facet whose $\eta$-vector reads $(d,d-1,d-2,1)$, and
  which is of type A.  So this is already resolved.

  It remains to consider $\eta^F=(d,d-1,d-2,1)$ of type B.  Then \autoref{lem:dd-1d-21-oppin0}
  finally shows that $P$ again must be a proper or skew bipyramid. Our argument is based on the
  previous cases.
\end{proof}

% Bibliography
\providecommand{\bysame}{\leavevmode\hbox to3em{\hrulefill}\thinspace}
\providecommand{\MR}{\relax\ifhmode\unskip\space\fi MR }
\providecommand{\MRhref}[2]{%
  \href{http://www.ams.org/mathscinet-getitem?mr=#1}{#2}
}
\providecommand{\href}[2]{#2}

\end{document}